\theoremstyle{plain}
\newtheorem{theorem}{Theorem}[section]
\newtheorem{lemma}[theorem]{Lemma}
\newtheorem{corollary}[theorem]{Corollary}
\newtheorem{proposition}[theorem]{Proposition}
\theoremstyle{definition}
\newtheorem{assumption}[theorem]{Assumption}
\theoremstyle{remark}
\newtheorem{remark}[theorem]{Remark}
\numberwithin{equation}{section}
\newcommand{\bR}{\mathbb{R}}
\newcommand{\bH}{\mathbb{H}}
\newcommand\cD{\mathcal{D}}
\newcommand\cH{\mathcal{H}}
\newcommand\cU{\mathcal{U}}
\newcommand\sW{\mathscr{W}}
\newcommand\sH{\mathscr{H}}
\begin{document}

\title[Dirichlet problem, singular or degenerate coefficients]{Parabolic and elliptic equations with singular or degenerate coefficients: the Dirichlet problem}

\author[H. Dong]{Hongjie Dong}
\address[H. Dong]{Division of Applied Mathematics, Brown University,
182 George Street, Providence, RI 02912, USA}
\email{Hongjie\_Dong@brown.edu}
\thanks{H. Dong is partially supported by the Simons Foundation, grant \# 709545.}

\author[T. Phan]{Tuoc Phan}
\address[T. Phan]{Department of Mathematics, University of Tennessee, 227 Ayres Hall,
1403 Circle Drive, Knoxville, TN 37996-1320, USA}
\email{phan@math.utk.edu}

\thanks{T. Phan is partially supported by the Simons Foundation, grant \# 354889.}

\begin{abstract} We consider the Dirichlet problem for a class of elliptic and parabolic equations in the upper-half space $\bR^d_+$, where the coefficients are the product of $x_d^\alpha, \alpha \in (-\infty, 1),$ and a bounded uniformly elliptic matrix of coefficients. Thus, the coefficients are singular or degenerate near the boundary $\{x_d =0\}$ and they may not locally integrable. The novelty of the work is that we find proper weights under which the existence, uniqueness, and regularity of solutions in Sobolev spaces are established. These results appear to be the first of their kind and are  new even if the coefficients are constant. They are also readily extended to systems of equations.
\end{abstract}

\subjclass[2020]{35K65, 35K67, 35K20, 35D30}
\keywords{Singular-degenerate parabolic equations, boundary regularity estimates, existence and uniqueness, weighted and mixed norm Sobolev spaces}

\maketitle

\section{Introduction}

Elliptic and parabolic equations with singular or degenerate coefficients appear quite naturally in both pure and applied problems.  As examples, we refer the reader to \cite{JW, Lin, Lin-Wang} for the problems in geometric PDEs, \cite{DHL, DHL-1} for problems from porous media, \cite{Athreya, Zh-Du} for problems in probability,  \cite{Heston, Pop-1} for problems arising in mathematical finance, \cite{Epstein, Epop-2} for problems in mathematical biology, and \cite{Cal-Syl, ST17} for problems related to fractional heat and fractional Laplace equations.  Due to these interests,  a lot of attention has been paid to regularity theory for equations with singular-degenerate coefficients. For examples, see the book \cite{OR} and the references therein for classical results, and also \cite{FKS, Lin, WWYZ}.

In this paper, we study the Dirichlet problem for a class of elliptic and parabolic equations in the upper-half space $\bR^d_+$ whose coefficients are singular or degenerate near the boundary $\partial \bR^d_+$ of the prototype $x_d^{\alpha}$ with $\alpha \in (-\infty, 1)$. We find a correct class of weighted Sobolev spaces for the existence, uniqueness, and regularity estimates of the solutions. This paper is also a companion of \cite{DP20}, in which the same type of equations but with the conormal boundary condition is considered when $\alpha \in (-1, \infty)$. A feature in these papers is that coefficients can be singular or degenerate near the boundary of the upper-half space in a way which may not satisfy the classical Muckenhoupt $A_2$ condition.  In fact, this work includes the supercritical case that the weight $x_d^\alpha$ is even not locally integrable near $\{x_d =0\}$.  As we point out in the paragraph after \eqref{0730.mu} below, in terms of  the $L_p$-theory, the natures of the boundary conditions for the Dirichlet problem considered here and the conormal problem in \cite{DP20} and also in \cite{DP19, D-P} are quite different. This is not the case for equations with bounded uniformly elliptic coefficients (i.e., $\alpha =0$) as seen in the classical theory.

Let  $\Omega_T =(-\infty,T)\times \bR^d_+$ be a space-time domain, where $T\in (-\infty,+\infty]$, $\bR_+= (0, \infty)$,  and $\bR^d_+ = \bR^{d-1} \times \bR_+$ is the upper-half space. Let $\alpha \in (-\infty,  1)$ be a fixed number and $\lambda>0$. Let $(a_{ij}): \Omega_T \rightarrow \bR^{d\times d}$ be a matrix of measurable coefficients, which satisfies the following ellipticity and boundedness conditions: there is a constant $\kappa \in (0,1)$ such that
\begin{equation} \label{ellipticity}
\begin{split}
\kappa |\xi|^ 2 \leq a_{ij}(t,x) \xi_i\xi_j \quad \text{and} \quad |a_{ij}(t,x)| \leq \kappa^{-1}
\end{split}
\end{equation}
for every $\xi = (\xi_1, \xi_2,\ldots, \xi_d) \in \bR^d$ and $(t,x)=(t,x',x_d) \in \Omega_T$.
In addition, we assume that $a_{ij}$ satisfy the partially small BMO condition. See Assumption \ref{assump1} below. We investigate the following  singular or degenerate parabolic equation
\begin{equation} \label{eq3.23}
x_d^\alpha ( u_t + \lambda  u ) - D_i\big(x_d^\alpha (a_{ij}(t,x) D_j u - F_i)\big)  = \sqrt{\lambda} x_d^\alpha f   \quad \text{in} \  \Omega_T
\end{equation}
with the zero Dirichlet boundary condition on the boundary $(-\infty, T) \times \partial \overline{\bR^{d}_+}$
\begin{equation} \label{m-bdr-cond}
u(t,x', 0)   = 0,  \quad (t,x') \ \in (-\infty, T) \times \bR^{d-1}
\end{equation}
as well as the corresponding elliptic equations. We note that by Remark \ref{bdr-remark} below, the boundary condition is automatically satisfied as long as the solution belongs to suitable function spaces when $\alpha \leq -1$.

Observe that by testing the equation \eqref{eq3.23} with $u$ and performing standard calculation, we have
\begin{equation}\label{ener-0728}
\begin{split}
& \int_{\Omega_T}|Du(z)|^2 \mu(dz) + \lambda \int_{\Omega_T} |u(z)|^2 \mu(dz)\\
& \leq N \int_{\Omega_T} \Big(|F(z)|^2 + |f(z)|^2 \Big)\mu(dz),
\end{split}
\end{equation}
where $\mu(dz) = x_d^\alpha dx dt$. Thus, it is tempting to prove an estimate like
\begin{equation} \label{0730.mu}
\begin{split}
& \int_{\Omega_T}|Du(z)|^p \mu(dz) + \lambda^{p/2} \int_{\Omega_T} |u(z)|^p \mu(dz)\\
& \leq N \int_{\Omega_T} \Big(|F(z)|^p + |f(z)|^p \Big) \mu(dz)
\end{split}
\end{equation}
using $\mu$ as the underlying measure. This is actually the case in \cite{DP19, D-P, DP20} for the conormal problems and in \cite{CMP} for elliptic equations in bounded domains with homogeneous Dirichlet boundary condition and with coefficients singular or degenerate as general $A_2$ weights satisfying a smallness condition on weighted mean oscillation. However, this does not seem to be a right approach here as the measure $\mu$ is not locally integrable near $\{x_d=0\}$ when $\alpha \leq -1$. In addition, Remark \ref{remark1} below indicates that \eqref{0730.mu} cannot be true when $p \geq \frac{1}{\alpha}+1$ even for $\alpha \in (0,1)$, $d=1$, and $(a_{ij})$ is an identity matrix. The novelty in our paper is an observation that \eqref{ener-0728} is equivalent to
\begin{align*}
& \int_{\Omega_T}|x_d^{\alpha}Du(z)|^2\mu_1(dz) + \lambda \int_{\Omega_T} |x_d^{\alpha} u(z)|^2\mu_1(dz) \\
& \leq N \int_{\Omega_T} \Big(|x_d^{\alpha} F(z)|^2 + |x_d^{\alpha}f(z)|^2 \Big) \mu_1(dz),
\end{align*}
where $\mu_1(dz) = x_d^{-\alpha} dx dt$. From this, we take $\mu_1$ as the underlying measure and establish the $L_p$-theory of \eqref{eq3.23} for $x_d^\alpha Du$ and $x_d^\alpha u$. This idea is clearly confirmed in the pointwise gradient estimates in Propositions \ref{interior-Linf}-\ref{lem1} and Corollary \ref{cor5.2} below, which is also a topic of independent interest. As a result, in Theorem \ref{thm2} we prove the following estimate
 \begin{equation} \label{08.20-eqn}
\begin{split}
& \int_{\Omega_T}|x_d^{\alpha}Du(z)|^p \mu_1(dz) + \lambda^{p/2} \int_{\Omega_T} |x_d^{\alpha} u(z)|^p\mu_1(dz) \\
& \leq N \int_{\Omega_T} \Big(|x_d^{\alpha} F(z)|^p + |x_d^{\alpha}f(z)|^p \Big) \mu_1(dz)
\end{split}
\end{equation}
for every weak solution $u$ of \eqref{eq3.23}-\eqref{m-bdr-cond} and  $p \in (1, \infty)$, which is quite different from the type of estimate \eqref{0730.mu} derived in \cite{DP19, D-P, DP20} and in \cite{CMP}. 

For local boundary estimates, when $u$ is a weak solution of $$
x_d^\alpha u_t- D_i\big(x_d^\alpha (a_{ij} D_j u - F_i)\big) =  x_d^\alpha f
$$
in the upper-half parabolic cylinder $Q_2^+$, one of our main results, Corollary \ref{main-thrm}, reads that
\begin{equation}
                                \label{eq6.35}
\begin{split}
&\Big(\int_{Q_1^+} \big(|u|^p+ |D u|^p\big) x_d^{(p-1)\alpha} \, dz\Big)^{1/p}
\leq N \int_{Q_2^+} \big(|u|+|Du|\big) \, dz \\
&\quad + N \Big(\int_{Q^+_2} |F|^p x_d^{(p-1)\alpha} \, dz\Big)^{1/p} +  N\Big(\int_{Q^+_2} |f|^{p^*} x_d^{(p^*-1)\alpha} \, dz\Big)^{1/p^*}
\end{split}
\end{equation}
for every $p\in (1,\infty)$, where $p^* \in [1, p)$ depending on $\alpha$, $p$, and $d$  as in \eqref{eq3.19}-\eqref{eq3-2.19} below and $N>0$ is a constant depending on $d$, $\alpha$, $p$, and $p^*$.  The elliptic version of \eqref{eq6.35} is also obtained. See Corollary \ref{ell-local-th}. To the best of our knowledge, \eqref{eq6.35} is new even in the elliptic case and when $a_{ij} =\delta_{ij}$.

Because the free terms $F$ and $f$ could be anisotropic due to the natures in applications, we also consider general $A_p$ weights with respect to the underlying measure $\mu_1$, and prove the existence, uniqueness, and estimates in mixed-norm weighted Sobolev spaces. In particular, for a specific power weight $x_d^\gamma$, we obtain the solvability and estimate in weighted and mixed-norm Sobolev spaces:
\begin{equation} \label{0805-1.eqn}
\begin{split}
& \int_{-\infty}^T\Big(\int_{\bR_+^d} \big(|u|^p+ |D u|^p\big) x_d^{\gamma} \, dx\Big)^{q/p}\,dt\\
& \leq N \int_{-\infty}^T\Big(\int_{\bR_+^d} \big(|F|^p+|f|^{p}\big) x_d^{\gamma} \, dx\Big)^{q/p}\,dt,
\end{split}
\end{equation}
where $p,q\in (1,\infty)$ and $\gamma$ is in the optimal range $(p\alpha -1, p-1)$. See Theorem \ref{thm1.4} and Remark \ref{rem2.6} for details.  A corresponding result for elliptic equations is also obtained. See Theorem \ref{ell-thm} below. Again, these results are new even when $(a_{ij})$ is the identity matrix.

It is worth noting that for the Laplace and heat equations, i.e., when $\alpha=0$ and $a_{ij} =\delta_{ij}$, a similar result as \eqref{0805-1.eqn} was first obtained by Krylov in \cite{Kr99} with exactly the same range of $\gamma$.  In fact, the necessity of such theory came from stochastic partial differential equations (SPDEs) and is well explained in \cite{Kr94}. See also subsequent work \cite{KKL, KK04,DK15} for deterministic equations and \cite{KL99,Kr09} for stochastic partial differential equations. In these papers, the parameter $\alpha$ is always assumed to be $0$.  In \cite{Sire-2}, the authors established H\"{o}lder and Schauder type estimates for scalar elliptic equations of a similar type under the conditions that the coefficient matrix is symmetric, sufficiently smooth, and the boundary is invariant with respect to $(a_{ij})$, i.e.,
\begin{equation*} 
a_{dj} = a_{jd}=0, \quad j = 1,2,\ldots, d-1.
\end{equation*}
Compared to \cite{Sire-2}, we do not impose such invariance condition or the symmetry condition. In the technical level, our proofs of the pointwise gradient estimates in Propositions \ref{interior-Linf}, \ref{lem1}, and Corollary \ref{cor5.2} are based on energy estimates, Sobolev embedding theorems, and an iteration argument, while the proof in \cite{Sire-2} uses a special transformation to reduce the problem to an equation with the conormal boundary condition considered in \cite{Sire-1}, a blow-up argument, and a Liouville type theorem. We also refer to \cite{WWYZ} for other results about H\"older estimates for linear equations in the case when $a_{ij} = \delta_{ij}$ and $\alpha> 0$, and to \cite{Lin, Lin-Wang} for results about a class of degenerate quasilinear and fully nonlinear elliptic equations satisfying some monotonicity conditions. In these papers, the proofs are based on the comparison principle and constructions of barrier functions, and thus only work for scalar equations in non-divergence form.

It is worth noting that when $\alpha\in (-1,1)$, the weight function $x_d^\alpha$ is in the Muckenhoupt $A_2$ class. Interior H\"{o}lder continuity of weak solutions to elliptic equations with coefficients singular or degenerate with general $A_2$-weights was established long time ago in \cite{FKS}. The $W^{1}_p$-counterpart of this result was only obtained recently in \cite{CMP}. 

The study of the well-posedness of the Dirichlet problem for linear elliptic equations with coefficients degenerate near the boundary was initiated in the pioneering work \cite{Keldys}. In this work, an important observation similar to our Remark \ref{bdr-remark} below was made. Since then, there are many papers  devoted to the existence and uniqueness of $L_2$-weak solutions for this class of equations. See the discussion on pages 1-2 of the book \cite{OR} and the references therein. In particular,  in \cite{ichera-1, Fichera} the existence, uniqueness, and estimates in unweighted $L_p$ spaces for weak solutions were established when the coefficients are sufficiently smooth. Specifically, as stated in \cite[Eq. (4) and Theorem 1.2.1]{OR}, in these work the leading coefficients are assumed to be in $C^2$. In our case, the leading coefficients are $x_d^\alpha a_{ij}(t,x)$ and they are only measurable.  Therefore, the results and techniques in these mentioned work are not applicable to our case.

Let us give a brief description of the proofs. We first derive the weighted estimate and solvability for equations with coefficients depending only on $x_d$.
A crucial step in the proof is the pointwise gradient estimates for homogeneous equations, which is Proposition \ref{lem1}. For this, we exploit the idea in \cite{DP19} which uses energy estimates and the Sobolev embedding, and combine these with an elaborate iteration procedure. Because we do no use the maximum/comparison principle, the approach works also for systems of equations. For equations with partially small BMO coefficients, we apply the level set argument introduced in \cite{Cal}. To prove the weighted mixed-norm estimates, we further show a higher regularity result (see Corollary \ref{cor5.2}) and then apply the method of mean oscillation estimates introduced in \cite{Krylov} and developed in \cite{MR3812104}.

The remaining part of the paper is organized as follows. In Section \ref{sec2}, we introduce some notation and state the main results of the paper. Section \ref{sec3} is devoted to some preliminary results including the weighted Hardy inequality and weighted parabolic embedding as well as the unique solvability of the equation when $p=2$. In Section \ref{sec4}, we consider equations with coefficients depending only on $x_d$. Among other results, we prove interior and boundary pointwise gradient estimates for solutions to the homogeneous equations and a unique solvability result for any $p\in (1,\infty)$. Finally, the proofs of the main theorems are given in Section \ref{sec5}.

\section{Notation and main results}
            \label{sec2}
\subsection{Notation} Let $r>0$, $z_0 = (t_0, x_0)$ with $x_0 = (x_0', x_{0d}) \in  \bR^{d-1} \times \bR$ and $t_0 \in \bR$. We define $B_r(x_0)$  to be the ball in $\bR^d$ of radius $r$ centered at $x_0$, $Q_r(z_0)$ to be the parabolic cylinder of radius $r$ centered at $z_0$:
\[
Q_r(z_0) = (t_0-r^2 , t_0)\times B_r(x_0).
\]
Also, let $B_r^+(x_0)$ and $Q_r^+(z_0)$ be the upper-half ball and cylinder of radius $r$ centered at $x_0$ and $z_0$, respectively:
\begin{align*}
B_r^+(x_0) &= \big\{x = (x', x_d) \in \bR^{d-1} \times \bR:\, x_d >0, \ |x -x_0| < r\big\},\\
Q_r^+ (z_0)&= ( t_0-r^2, t_0)\times B_r^+ (x_0).
\end{align*}
When $x_0 =0$ and $t_0 =0$, for  simplicity of notation, we drop $x_0, z_0$ and write $B_r$, $B_r^+$, $Q_r$, and $Q_r^+$, etc.
We also define $B'(x_0')$ and $Q'(z'_0)$ to be the ball and the  parabolic cylinder in $\bR^{d-1}$ and $\bR^{d}$, where $z_0' = (t_0, x_0')$.

For a given non-negative Borel measure $\sigma$ on $\bR^{d+1}_+$ and for $p \in [1, \infty)$,  $-\infty\le S<T\le +\infty$, and $\cD \subset \bR^d_+$, let $L_p((S,T)\times \cD, d\sigma)$ be the weighted Lebesgue space consisting of measurable functions $u$ on $(S,T)\times \cD$ such that the norm
\[
\|u\|_{L_p( (S,T)\times \cD, d\sigma)}= \left( \int_{(S,T)\times \cD} |u(t,x)|^p\, d\sigma (t,x) \right)^{1/p} <\infty.
\]
For $p,q\in [1,\infty)$,  a non-negative Borel measure $\sigma$ on $\bR^{d}_+$, and the weights $\omega_0=\omega_0(t)$ and $\omega_1=\omega_1(x)$, we define $L_{q,p}((S,T)\times \cD,\omega\,d\sigma)$ to be the weighted and mixed-norm Lebesgue space on $\Omega_T$ equipped with the norm
\begin{equation*}
\|u\|_{L_{q,p}((S,T)\times \cD, \omega\, d\sigma)}=\left(\int_{S}^T\Big(\int_{\cD} |u(t,x)|^p \omega_1(x)\,\sigma(dx)\Big)^{q/p}\omega_0(t)\,dt\right)^{1/q},
\end{equation*}
where $\omega(t,x)=\omega_0(t)\omega_1(x)$.  We define the weighted Sobolev space
$$
W^1_p(\cD,\omega_1\,d\sigma)=\big\{u\in L_p(\cD,\omega_1\,d\mu):\,Du\in L_p(\cD,\omega_1\,d\sigma)\big\}
$$
equipped with the norm
$$
\|u\|_{W^1_p(\cD,\omega_1 d\sigma)}=\|u\|_{L_p(\cD,\omega_1 d\sigma)}+\|Du\|_{L_p(\cD,\omega_1d\sigma)}.
$$
The Sobolev space $\sW^1_p(\cD, \omega_1 d\sigma)$ is defined to be the closure in $W^1_p(\cD,\omega_1\,d\sigma)$ of all compactly supported functions in $C^\infty(\overline{\cD})$ vanishing near $\overline{\cD} \cap \{x_d=0\}$.

We also define
\[
\begin{split}
& \bH_{q, p}^{-1}( (S,T)\times \cD, \omega d\sigma) \\
& =\big\{u:\, u  =  D_iF_i+F_0/x_d +f\ \ \text{for some}\ f\in L_{q, p}( (S,T)\times \cD, \omega d\sigma)\\
& \qquad \ F= (F_0,\ldots,F_d) \in L_{q, p}((S,T)\times \cD, \omega d\sigma)^{d +1}\big\},
\end{split}
\]
which is equipped with the norm
\begin{align*}
\|u\|_{\bH_{q,p}^{-1}((S,T)\times \cD, \omega d\sigma)} &=\inf\big\{\|F\|_{L_{q,p}((S,T)\times \cD, \omega d\sigma)}
+\|f\|_{L_{q,p}((S,T)\times \cD, \omega d\sigma)}: \\
& \quad \qquad \ u= D_iF_i+F_0/x_d+f\big\}.
\end{align*}
Finally, we define the space
\[
\begin{split}
& \sH_{q,p}^1((S,T)\times \cD, \omega d\sigma)\\
& =\big\{u \in L_q(((S, T),\omega_0),  \sW^1_p(\cD, \omega_1 d\sigma)):    u_t\in  \bH_{q,p}^{-1}( (S,T)\times \cD, \omega d\sigma)\big\}
\end{split}
\]
equipped with the norm
\begin{align*}
&\|u\|_{\sH_{q,p}^1((S,T)\times \cD, \omega d\sigma)} \\
&= \|u\|_{L_{q,p}((S,T)\times \cD, \omega d\sigma)}
+ \|Du\|_{L_{q, p}((S,T)\times \cD,\omega d \sigma)}+\|u_t\|_{\bH_{q, p}^{-1}((S,T)\times \cD, \omega d\sigma)}.
\end{align*}
Alternatively, we can define $\sH_{q,p}^1((S,T)\times \cD, \omega d\sigma)$ to be the closure of all compactly supported functions in $C^\infty(\overline{(S,T)\times \cD})$ vanishing near $\overline{\cD \cap} \{x_d=0\}$ in the space
\begin{align*}
&\cH^1_{q,p}((S,T)\times \cD, \omega d\sigma)\\
& =\big\{u \in L_q(((S, T),\omega_0), W^1_p(\cD, \omega_1 d\sigma)):    u_t\in  \bH_{q,p}^{-1}( (S,T)\times \cD, \omega d\sigma)\big\},
\end{align*}
which is equipped with norm
\begin{align*}
&\|u\|_{\cH_{q,p}^1((S,T)\times \cD, \omega d\sigma)} \\
&= \|u\|_{L_{q,p}((S,T)\times \cD, \omega d\sigma)}
+ \|Du\|_{L_{q, p}((S,T)\times \cD,\omega d \sigma)}+\|u_t\|_{\bH_{q, p}^{-1}((S,T)\times \cD, \omega d\sigma)}.
\end{align*}
When $p =q$, we simply write $\sH_p^1(\Omega_T, \omega d\sigma) = \sH_{p,p}^1(\Omega_T, \omega d\sigma)$. Similar notation are also used for other spaces.

Throughout the paper, for $\alpha\in (-\infty,1)$, we define the measures
$$
d\mu(x)=\mu(dx)=x_d^\alpha\,dx,\quad d\mu_1(x)=\mu_1(dx)=x_d^{-\alpha}\,dx,
$$
and
$$
d\mu(z)=\mu(dz)=x_d^\alpha\,dxdt,\quad d\mu_1(z)=\mu_1(dz)=x_d^{-\alpha}\,dxdt.
$$
We say that $u \in \sH_{q,p}^1((S,T)\times \cD, \omega d\mu)$ is a weak solution to \eqref{eq3.23}-\eqref{m-bdr-cond} in $(S,T)\times \cD$ if
\begin{align*}
&\int_{(S,T)\times \cD} (-u \partial_t \varphi +\lambda u \varphi)\,\mu(dz) + \int_{(S,T)\times \cD}( a_{ij} D_{j} u  - F_i)D_{i} \varphi\,\mu(dz) \\
&= \lambda^{1/2} \int_{(S,T)\times \cD} f(z) \varphi(z)\,\mu(dz)
\end{align*}
for any $\varphi \in C_0^\infty((S,T)\times \cD)$.

We need the following assumption on the leading coefficients that was first introduced in \cite{MR2338417, MR2300337} in the unweighted case.
\begin{assumption}[$\delta_0, R_0$]
            \label{assump1}  For every $r\in (0, R_0]$ and $z_0=(z_0',x_d) \in \bR^d \times \overline{\bR_+}$, we have
\[
 \max_{i,j \in\{1,2,\ldots, d\}}\fint_{Q^+_{r}(z_0)} |a_{ij}(t,x) - [a_{ij}]_{r,z_0}(x_d)|\, \mu_1(dz)\\
\le \delta_0,
\]
where $[a_{ij}]_{r,z_0}(x_d)$ is the average of $a_{ij}$ with respect to $z'=(t, x')$  in the parabolic cylinder $Q'_{r}(z_0') \subset \bR^d$:
\begin{equation*} 
[a_{ij}]_{r,z_0}(x_d) = \fint_{Q'_{r}(z_0')}a_{ij}(t,x', x_d) \,dx'\, dt, \quad  i, j \in \{1, 2,\ldots, d\}.
\end{equation*}
\end{assumption}
\noindent
In the above assumption and throughout the paper, for a measurable set $\Omega\subset \bR^{d+1}$ and any integrable function $f$ on $\Omega$ with respect to some locally finite Borel measure $\sigma$, we write
$$
\fint_\Omega f(z)\ \sigma(dz) =\frac 1 {\sigma(\Omega)}\int_\Omega f(z)\, \sigma(dz), \quad \text{where} \quad \sigma(\Omega) = \int_{\Omega} \sigma(dz).
$$
\noindent
Assumption \ref{assump1} means that the mean oscillations of $(a_{ij})$ with respect to the $z'$-variable in parabolic cylinders of radius at most $R_0$ are smaller than $\delta_0$. Therefore, if the matrix $(a_{ij})$ only depends on the $x_d$-variable, the assumption is always satisfied.

\subsection{Main theorems} Our first main result is about the existence, uniqueness, and global regularity estimates of solutions to the divergence form equation \eqref{eq3.23}.
\begin{theorem} \label{thm2}
Let $\alpha \in (-\infty, 1)$, $\kappa \in (0,1)$, $R_0 \in (0, \infty)$, and $p \in (1,\infty)$.  Then there exist $\delta_0=\delta_0(d,\kappa,\alpha,p) \in (0,1)$ and $\lambda_0~=~\lambda_0(d,\kappa,\alpha, p)~\ge~0$ such that the following assertions hold. Suppose that \eqref{ellipticity}  and \textup{Assumption \ref{assump1} ($\delta_0, R_0$)} are satisfied. If $u\in \sH^1_p(\Omega_T, x_{d}^{\alpha p} d\mu_1)$ is a weak solution of
\eqref{eq3.23}-\eqref{m-bdr-cond} for some $\lambda~\ge~\lambda_0 R_0^{-2}$, $f\in L_p(\Omega_T, x_d^{\alpha p}d\mu_1)$, and $F \in L_p(\Omega_T, x_d^{\alpha p} d\mu_1)^d$,  then we have
\begin{equation} \label{main-thm-est}
\begin{split}
& \| D u\|_{L_p(\Omega_T, x_d^{\alpha p} d\mu_1)}+\sqrt{\lambda} \|u\|_{L_p(\Omega_T,x_d^{\alpha p} d\mu_1)}\\
& \leq N \|F\|_{L_p(\Omega_T, x_d^{\alpha p} d\mu_1)} + N\| f\|_{L_p(\Omega_T,x_d^{\alpha p} d\mu_1)},
\end{split}
\end{equation}
where $N=N(d,\kappa,\alpha,p)~>~0$. Moreover, for any $\lambda~>~\lambda_0 R_0^{-2}$, $f\in L_p(\Omega_T,x_d^{\alpha p} d\mu_1)$, and $F \in L_p(\Omega_T, x_d^{\alpha p}d\mu_1)^d$, there exists a unique weak solution $u\in \sH^1_p(\Omega_T, x_d^{\alpha p}d\mu_1)$ to \eqref{eq3.23}-\eqref{m-bdr-cond}.
\end{theorem}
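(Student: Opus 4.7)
The plan is to follow the three-step program outlined at the end of the introduction: first solve the model equation with coefficients depending only on $x_d$, then pass to partially small BMO coefficients by a Caffarelli-type level set argument, and finally close existence by the method of continuity starting from the $L_2$-solvability of Section~\ref{sec3}.

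First I would dispose of the case $a_{ij} = a_{ij}(x_d)$. For this, Section~\ref{sec4} provides unique solvability and the analogue of \eqref{main-thm-est} for every $p\in(1,\infty)$. The key ingredient is the pointwise gradient estimate of Propositions~\ref{interior-Linf} and \ref{lem1} for the homogeneous equation, established via energy inequalities in the measure $\mu_1$, the weighted Sobolev/Hardy embedding of Section~\ref{sec3}, and an iteration. The point to stress is that the pointwise bound is placed on $x_d^{\alpha} D v$ rather than on $Dv$, so it lives at exactly the right integrability and homogeneity to interface with the weighted space $L_p(d\mu_1)$ in which we work.

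Second, to treat coefficients satisfying Assumption~\ref{assump1}, I would localize on an upper-half parabolic cylinder $Q_r^+(z_0)$ with $r\le R_0$ and decompose $u=v+w$, where $v$ solves the frozen-in-$z'$ equation with the $(t,x')$-averaged matrix $[a_{ij}]_{r,z_0}(x_d)$, while $w$ absorbs the perturbation $(a_{ij}-[a_{ij}]_{r,z_0})D_j u$ as an additional divergence-form forcing whose $L_p(d\mu_1)$ norm is bounded by $\delta_0\,\|x_d^{\alpha}Du\|_{L_p(d\mu_1)}$. Applying the pointwise estimate from Step~1 to $v$ and the $L_p(d\mu_1)$-bound to $w$ yields a good-$\lambda$ / mean-oscillation inequality for the level sets of the maximal function of $x_d^{\alpha}Du$. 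Then I would run the Caffarelli level-set argument of \cite{Cal} with respect to the underlying measure $\mu_1$: since $\alpha<1$ the weight $x_d^{-\alpha}$ lies in $A_q$ for every $q>1$, $\mu_1$ is doubling, and the Hardy-Littlewood maximal operator is bounded on $L_p(d\mu_1)$, so the good-$\lambda$ inequality integrates to the a priori bound \eqref{main-thm-est} provided $\delta_0$ is small enough and $\lambda_0$ is large enough to absorb the lower-order terms produced by the localization. Existence and uniqueness at this $p$ then follow by the method of continuity along the segment from $(a_{ij})$ to $\delta_{ij}$, with Section~\ref{sec3} providing the starting solvability at $p=2$ and the just-proven a priori estimate controlling the deformation.

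The most delicate part I anticipate is making the two preceding steps uniform up to the singular/degenerate boundary $\{x_d=0\}$. The Caffarelli scheme has to be formulated for the weighted gradient $x_d^{\alpha}Du$ rather than $Du$ itself, and the usual dichotomy between interior cylinders $Q_r(z_0)\subset\Omega_T$ and boundary cylinders $Q_r^+(z_0)$ with $z_{0d}=0$ must be compatible with the anisotropic scaling forced by the weight $x_d^{\alpha}$. This is precisely why the Section~\ref{sec4} estimates are stated pointwise for $x_d^{\alpha}Dv$, and why the base measure is chosen to be $\mu_1$ rather than $\mu$: $\mu$ fails to be locally finite for $\alpha\le -1$, whereas $\mu_1$ is doubling for the entire range $\alpha\in(-\infty,1)$ and supports the standard $A_p$-weight theory, allowing the level set machinery to proceed once the interior/boundary dichotomy is set up.
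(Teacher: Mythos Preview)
Your plan matches the paper's proof in its essential architecture: Section~\ref{sec4} for coefficients depending only on $x_d$, then a Caffarelli-type decomposition and level-set argument in the measure $\mu_1$ for partially BMO coefficients (this is Proposition~\ref{G-approx-propos}), and method of continuity for existence. Three points deserve correction or sharpening.

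First, the level-set step as implemented in the paper handles only $p\in(2,\infty)$: the decomposition estimates \eqref{B-u-tilde-est-inter}--\eqref{D-L-infty-w-inter} are stated at the $L_2$ level and feed a real-variable argument that upgrades to $L_p$ for $p>2$. The range $p\in(1,2)$ is then obtained by duality, exactly as in the proof of Theorem~\ref{thm3}. Your proposal does not mention this split.

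Second, the decomposition must be valid for \emph{all} radii $r>0$ for the maximal-function argument to run on $\Omega_T$, not just $r\le R_0$. The paper handles $r>R_0/10$ by first assuming $\operatorname{spt}(u)\subset(s-(R_0r_0)^2,s+(R_0r_0)^2)\times\bR^d_+$, so that the perturbation term is small by support size rather than by the BMO bound; it then removes this assumption by a partition of unity in $t$, producing an error $NR_0^{-2}\lambda^{-1/2}\|x_d^\alpha u\|_{L_p(\Omega_T,d\mu_1)}$ that is absorbed for $\lambda\ge\lambda_0R_0^{-2}$. This is how $\lambda_0$ actually enters; your phrase about ``$\lambda_0$ large enough to absorb lower-order terms from localization'' gestures at this but the mechanism is specific.

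Third, the endpoint for the method of continuity is Theorem~\ref{thm3} (solvability of the simple-coefficient equation in $\sH^1_p(\Omega_T,x_d^{\alpha p}d\mu_1)$), not the $L_2$ result of Section~\ref{sec3}: method of continuity operates in a fixed function space, so the starting point must already be solvable at the target $p$. Your Step~1 already supplies this, so the error is only in the reference. Finally, your labels of $v$ and $w$ are swapped relative to Propositions~\ref{Simple-approx} and \ref{G-approx-propos} (in the paper $w=u-v$ solves the homogeneous simple-coefficient equation and receives the pointwise estimate), but that is cosmetic.
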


In the next result, we give a local boundary estimate in a upper-half cylinder. Consider
 \begin{equation} \label{eqn-entire}
\left\{
\begin{aligned}
x_d^\alpha u_t- D_i\big(x_d^\alpha (a_{ij} D_j u - F_i)\big)   =  x_d^\alpha f  \quad &\text{in} \ \ Q_2^+\\
u  =  0 \quad &\text{on} \ \partial Q_2^+ \cap \{x_d =0\}.
\end{aligned}\right.
\end{equation}
Let $p  \in [1,\infty)$ and $p^*\in [1,p)$ satisfy
\begin{equation}
                                \label{eq3.19}
\left\{\begin{aligned}
\frac{1}{p^*} \le \frac{1}{d+2+ \alpha_-}+\frac{1}{p} &\quad\text{when}\ p^*>1\\
\frac{1}{p^*}< \frac{1}{d+2+ \alpha_-}+\frac{1}{p} &\quad\text{when}\ p^*=1,
\end{aligned}\right.
\end{equation}
if $d \geq 2$ or $\alpha=0$, and
\begin{equation}
                                \label{eq3-2.19}
\left\{\begin{aligned}
\frac{1}{p^*}\le \frac{1}{4+ \alpha_-}+\frac{1}{p}&\quad\text{when}\ p^*>1\\
\frac{1}{p^*} < \frac{1}{4+ \alpha_-}+\frac{1}{p}&\quad\text{when}\ p^*=1,
\end{aligned}\right.
\end{equation}
if $d =1$  and $\alpha\neq 0$, where $\alpha_- = \max\{-\alpha, 0\}$.

\begin{corollary} \label{main-thrm}
Let $\alpha \in (-\infty, 1)$, $\kappa \in (0,1)$, $R_0\in (0,\infty)$,  $1<p_0 \le p<\infty$, and $p^*\in [1,p)$ satisfy \eqref{eq3.19}-\eqref{eq3-2.19}. Then there exists $\delta_0=\delta_0(d,\kappa,\alpha,p_0,p) \in (0,1)$ such that the following assertion holds.  Suppose that \eqref{ellipticity} and  \textup{Assumption} \ref{assump1} \textup{(}$\delta_0, R_0$\textup{)} are satisfied. If $u\in \sH^1_{p_0}(Q_2^+, x_{d}^{\alpha p_0}d\mu_1)$ is a weak solution of \eqref{eqn-entire}, $F\in L_p(Q_2^+, x_{d}^{\alpha p}d\mu_1)^{d}$, and $f\in L_{p^*}(Q_2^+, x_d^{\alpha p^*}d\mu_1)$, then $u \in \sH_p^1(Q_1^+, x_{d}^{\alpha p}d\mu_1)$ and
\begin{align} \label{main-thm-estb}
& \| u\|_{L_p(Q_{1}^+, x_d^{\alpha p} d\mu_1)}+ \|D u\|_{L_p(Q_{1}^+, x_d^{\alpha p} d\mu_1)} \nonumber\\
&\leq  N \|u\|_{L_1(Q_2^+)} + N \|Du\|_{L_1(Q_2^+)} \\ \nonumber
& \qquad + N\| F\|_{L_p(Q_2^+, x_d^{\alpha p} d\mu_1)} + N\| f\|_{L_{p^*}(Q_2^+, x_d^{\alpha p^*} d\mu_1)},
\end{align}
where $N=N(d,\kappa,\alpha, p, p^*, R_0)>0$.
\end{corollary}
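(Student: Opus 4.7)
The strategy is to reduce the local estimate \eqref{main-thm-estb} to the global $L_p$-solvability of Theorem \ref{thm2} by a cutoff argument on nested upper-half cylinders, preceded by a bootstrap in integrability and supplemented by a weighted Sobolev embedding that handles the $L_{p^*}$ datum $f$.

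Fix radii $1 \le r < R \le 2$ and a smooth cutoff $\eta$ with $\eta \equiv 1$ on $Q_r^+$ and $\supp \eta \subset Q_R^+$. Since $u$ vanishes on $\{x_d = 0\}$, the function $v := \eta u$ (extended by zero outside $Q_R^+$) is a legitimate element of the global space $\sH^1_p(\Omega_T, x_d^{\alpha p} d\mu_1)$ once that regularity has been established for $u$ on $Q_R^+$, and it satisfies, for any fixed $\lambda > \lambda_0 R_0^{-2}$,
\[
x_d^\alpha(v_t + \lambda v) - D_i\bigl(x_d^\alpha(a_{ij} D_j v - \widetilde F_i)\bigr) = \sqrt\lambda\, x_d^\alpha \widetilde f \quad \text{in } \Omega_T,
\]
where $\widetilde F_i = \eta F_i - a_{ij} u D_j \eta$ and $\sqrt\lambda\, \widetilde f$ collects the commutator terms $\eta f$, $F_i D_i \eta$, $-a_{ij} D_j u\, D_i \eta$, $-u \partial_t \eta$, and $\lambda \eta u$. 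Applying estimate \eqref{main-thm-est} to $v$, and observing that $x_d^{\alpha p} d\mu_1 = x_d^{(p-1)\alpha}\,dz$, one obtains a local inequality in which the right-hand side involves $\|F\|_{L_p(Q_2^+, x_d^{(p-1)\alpha})}$, a converted version of $\|f\|_{L_{p^*}(Q_2^+, x_d^{(p^*-1)\alpha})}$, and cutoff-generated contributions of the form $(R - r)^{-2}\bigl(\|u\|_{L_p(Q_R^+, x_d^{(p-1)\alpha})} + \|Du\|_{L_p(Q_R^+, x_d^{(p-1)\alpha})}\bigr)$. To close the estimate I would invoke a standard telescoping iteration lemma along $r_k = 1 + (1 - 2^{-k})$, absorbing the intermediate weighted $L_p$-norms by splitting them, via H\"older's inequality (permissible because $x_d^{-\alpha}$ is locally integrable for $\alpha < 1$), into a small multiple of the $L_p$-norm on a slightly larger cylinder plus a large multiple of the unweighted $L_1$-norms on $Q_2^+$. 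Finiteness of the relevant weighted $L_p$-norms at an intermediate radius, needed to start the iteration, comes from a preliminary bootstrap: beginning at the assumed $\sH^1_{p_0}$-regularity, Theorem \ref{thm2} applied on shrinking cylinders, together with the weighted parabolic Sobolev embedding of Section \ref{sec3}, raises the integrability exponent by a fixed factor per step until it reaches $p$ after finitely many iterations.

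The main obstacle is the source $f$, which is assumed only in $L_{p^*}$ for an exponent $p^* < p$ dictated by \eqref{eq3.19}-\eqref{eq3-2.19}. To fit it into the framework of Theorem \ref{thm2} I would convert $f$ into a divergence via the weighted primitive
\[
G(t, x', x_d) := x_d^{-\alpha} \int_0^{x_d} s^\alpha f(t, x', s)\, ds,
\]
for which $x_d^\alpha f = D_d(x_d^\alpha G)$, and then absorb $G$ into a modified $\widetilde F_d$. A weighted Hardy/Sobolev inequality from Section \ref{sec3} should deliver $\|G\|_{L_p(x_d^{(p-1)\alpha})} \le N \|f\|_{L_{p^*}(x_d^{(p^*-1)\alpha})}$ precisely in the exponent range \eqref{eq3.19}-\eqref{eq3-2.19}, where the effective parabolic dimension is $d + 2 + \alpha_-$ in general and $4 + \alpha_-$ in the degenerate one-dimensional case. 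The delicate task will be to dovetail this conversion with the cutoff and telescoping scheme so that the final right-hand side is exactly the combination of $\|u\|_{L_1(Q_2^+)} + \|Du\|_{L_1(Q_2^+)}$ and the weighted norms of $F$ and $f$ on $Q_2^+$ appearing in \eqref{main-thm-estb}.
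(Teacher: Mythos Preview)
Your overall cutoff-and-bootstrap framework is reasonable, but the step that handles $f\in L_{p^*}$ is flawed. You propose to replace $f$ by the $x_d$-primitive
\[
G(t,x',x_d)=x_d^{-\alpha}\int_0^{x_d} s^\alpha f(t,x',s)\,ds
\]
and then invoke a ``weighted Hardy/Sobolev inequality'' of the form $\|G\|_{L_p(Q_2^+,x_d^{(p-1)\alpha})}\le N\|f\|_{L_{p^*}(Q_2^+,x_d^{(p^*-1)\alpha})}$. This inequality is false. The primitive integrates only in $x_d$, so it cannot raise the integrability exponent in the $(t,x')$ variables; yet the gap $\tfrac1{p^*}-\tfrac1p\le\tfrac1{d+2+\alpha_-}$ in \eqref{eq3.19} reflects the full parabolic dimension. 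Concretely, take $f(t,x',x_d)=\phi(x_d)\psi(t,x')$ with $\phi\in C_0^\infty((\tfrac12,1))$ and $\psi\in L_{p^*}(Q_2')\setminus L_p(Q_2')$; then $G$ factors the same way and $\|G\|_{L_p}=\infty$. Thus you cannot feed $f$ into Theorem~\ref{thm2} this way, and the argument breaks down precisely at the step that is supposed to explain why the exponent condition \eqref{eq3.19}--\eqref{eq3-2.19} enters.

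The paper circumvents this by a duality argument: after localizing with $\eta$, instead of applying Theorem~\ref{thm2} to $w=u\eta$, it solves the \emph{adjoint} problem for $v$ with smooth test data $G,g\in C_0^\infty(Q_1^+)$, obtaining $v\in\sH^1_q$ with $q=p/(p-1)$, and then pairs the two equations. The weighted parabolic embedding (Lemma~\ref{lem2.2}) is applied to $v$, not to $u$ or to any primitive of $f$, yielding $\|x_d^\alpha v\|_{L_{q^*}(Q_2^+,d\mu_1)}\le N\sqrt\lambda$ with $q^*=p^*/(p^*-1)$; by H\"older this pairs exactly with $\|x_d^\alpha\tilde f\|_{L_{p^*}}$. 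The exponent condition \eqref{eq3.19}--\eqref{eq3-2.19} is precisely what makes Lemma~\ref{lem2.2} apply to $v$. The commutator term $a_{ij}D_ju\,D_i\eta$ lands in $\tilde f$ and contributes $\|x_d^\alpha Du\|_{L_{p^*}(Q_2^+)}$ on the right, which is then removed by the standard iteration since $p^*<p$.
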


The conditions of $p$ and $p^*$ in \eqref{eq3.19} and \eqref{eq3-2.19} are due the Sobolev embedding result, Lemma \ref{lem2.2} below. When $d \geq 2$, the condition \eqref{eq3.19} of $p$ and $p^*$ is optimal. However, due to some technical difficulty, we could not reach the optimal condition when $d=1$.  We expect that when $d =1$, Corollary \ref{main-thrm} still holds when $p$ and $p^*$ satisfy the same condition as \eqref{eq3.19}.

The following remark confirms that regularity estimate using the measure $\mu$ as in \eqref{0730.mu}  may not valid even for $\alpha \in (0,1)$, $d=1$, and $a_{ij} =\delta_{ij}$.
\begin{remark}
        \label{remark1}
For $\alpha \in (0, 1)$, let $u(x) = x^{1-\alpha}$ for $x \in [0,\infty)$. We see that $u \in \sW^{1,2}((0,1), d\mu)$ is a weak solution of
\[
(x^\alpha u'(x))' = 0, \quad x  \in (0,1) \quad \text{and} \quad u(0) = 0.
\]
But
\[
\int_0^1 |u'(x)|^p x^{\alpha}dx = N \int_0^1 x^{\alpha(1-p)} dx < \infty
\]
only if $p < \frac{1}{\alpha} +1$.
\end{remark}

Our last result about \eqref{eq3.23}-\eqref{m-bdr-cond} is about the estimate and solvability in weighted and mixed-norm Sobolev spaces. For $p \in (1, \infty)$, a locally integrable function $\omega : \bR^d_+ \rightarrow \bR_+$ is said to be in $A_p(\bR^d_+, \mu_1)$ Muckenhoupt class of weights if \begin{equation*}
\begin{split}
& [\omega]_{A_p(\bR^d_+, \mu_1)}\\
& =
\sup_{r >0,x \in \overline{\bR^d_+}} \Big(\fint_{B^+_r(x)} \omega(y)\, \mu_1(dy) \Big)\Big(\fint_{B^+_r(x)} \omega(y)^{\frac{1}{1-p}}\, \mu_1(dy) \Big)^{p-1}<\infty.
\end{split}
\end{equation*}
Similarly, a locally integrable function $\omega : \bR \rightarrow \bR_+$ is said to be in $A_p(\bR)$ Muckenhoupt class of weights if
\begin{equation*}
[\omega]_{A_p(\bR)} =
\sup_{r >0,t \in \bR}
\Big(\fint_{t-r^2}^{t+r^2} \omega(s)\, ds \Big)\Big(\fint_{t-r^2}^{t+r^2} \omega(s)^{\frac{1}{1-p}}\, ds \Big)^{p-1}<\infty.
\end{equation*}
\begin{theorem} \label{thm1.4}
Let $\alpha \in (-\infty, 1)$, $\kappa \in (0,1)$,  $R_0 \in (0, \infty)$, $p, q, K \in (1,\infty)$. Then there exist
$$
\delta_0=\delta_0(d, \kappa, \alpha, p, q,  K)\in (0,1) \quad \text{and}\quad
\lambda_0=\lambda_0(d, \kappa, \alpha, p, q, K)\ge 0,
$$
such that the following assertions hold.  Suppose that \eqref{ellipticity} and \textup{Assumption \ref{assump1} ($\delta_0, R_0$)} are satisfied, and $\omega(t,x) =\omega_0(t)\omega_1(x)$  with $\omega_0\in A_q(\bR)$, $\omega_1\in A_p(\bR^d_+, \mu_1)$ such that
$$
 [\omega_0]_{A_q(\bR)}\le K,\quad [\omega_1]_{A_p(\bR^d_+, \mu_1)}\le K.
$$
If $u\in {\sH}^1_{q,p}(\Omega_T,\omega x_d^{\alpha p}d\mu_1)$ is a weak solution of
\eqref{eq3.23}-\eqref{m-bdr-cond} for some $\lambda~\ge~\lambda_0 R_0^{-2}$, $f\in L_{q,p}(\Omega_T,\omega x_d^{\alpha p}d\mu_1)$, and $F \in L_{q,p}(\Omega_T,\omega x_d^{\alpha p}d\mu_1)^d$, then we have
\begin{align} \label{main-thm-estc}
&\|D u\|_{L_{q,p}(\Omega_T,\omega x_d^{\alpha p}d\mu_1)}+\sqrt{\lambda} \|u\|_{L_{q,p}(\Omega_T,\omega x_d^{\alpha p}d\mu_1)}\nonumber\\
&\leq N \|F\|_{L_{q,p}(\Omega_T,\omega x_d^{\alpha p}d\mu_1)} + N\|f\|_{L_{q,p}(\Omega_T,\omega x_d^{\alpha p}d\mu_1)},
\end{align}
where $N=N(d, \kappa, \alpha, p, q,  K)>0$.
Moreover, for any $\lambda > \lambda_0 R_0^{-2}$,
$$
f\in L_{q, p}(\Omega_T, \omega x_d^{\alpha p}d\mu_1),\quad \text{and}\quad
F \in L_{q,p}(\Omega_T, \omega x_d^{\alpha p}d\mu_1)^d,
$$
there exists a unique weak solution  $u\in {\sH}^1_{q, p}(\Omega_T, \omega x_d^{\alpha p}d\mu_1)$ to \eqref{eq3.23}-\eqref{m-bdr-cond}.
\end{theorem}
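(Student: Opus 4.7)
The plan is to deduce Theorem \ref{thm1.4} from Theorem \ref{thm2} (the special case $p = q$, $\omega = 1$) by combining Krylov's method of mean oscillation estimates with a weighted Fefferman--Stein theorem on the space of homogeneous type $(\overline{\Omega_T}, \mu_1)$. First I would reduce to the a priori estimate \eqref{main-thm-estc}: solvability and uniqueness in ${\sH}^1_{q,p}(\Omega_T,\omega x_d^{\alpha p}d\mu_1)$ then follow by the method of continuity from the $p = q$, $\omega = 1$ solvability of Theorem \ref{thm2}, together with density of smooth compactly supported data and the a priori estimate applied to the difference of two solutions.

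The core step is a mean oscillation estimate with respect to $\mu_1$. For each $z_0 \in \overline{\Omega_T}$ and small $r$, I would freeze the coefficients in $(t,x')$ to reduce to an equation whose coefficients depend only on $x_d$, up to a partially small BMO perturbation of size $\delta_0$. Then I decompose $u = v + w$, where $v$ solves the corresponding homogeneous equation with the same lateral data and zero Dirichlet condition on $\{x_d = 0\}$, and $w = u - v$ absorbs the inhomogeneity and the perturbation. For $v$, the pointwise gradient bounds of Propositions \ref{interior-Linf}--\ref{lem1} and the higher regularity of Corollary \ref{cor5.2} yield $L_\infty$ control of $x_d^\alpha Dv$ and $\sqrt{\lambda}\,x_d^\alpha v$ on a smaller concentric cylinder by $L_{q_0}(\mu_1)$-averages on the larger one; for $w$, the $L_{q_0}(\mu_1)$-estimate from Theorem \ref{thm2} (with $p = q = q_0$ for a fixed $q_0 < \min(p,q)$) yields control by $|x_d^\alpha F|^{q_0} + |x_d^\alpha f|^{q_0}$. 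Combining these for $g \in \{x_d^\alpha|Du|,\,\sqrt{\lambda}\,x_d^\alpha|u|\}$ gives a sharp-function-type bound
\[
\fint_{Q_{r/2}^+(z_0)} \bigl|g - (g)_{Q_{r/2}^+(z_0)}\bigr|\, \mu_1(dz) \le N\Bigl(\cM_{\mu_1}\bigl(|x_d^\alpha F|^{q_0} + |x_d^\alpha f|^{q_0}\bigr)(z_0)\Bigr)^{1/q_0} + N\delta_0^\theta\bigl(\cM_{\mu_1}(g^{q_0})(z_0)\bigr)^{1/q_0},
\]
for some $\theta > 0$, where $\cM_{\mu_1}$ is the parabolic Hardy--Littlewood maximal operator adapted to $\mu_1$.

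Since $-\alpha > -1$, the measure $\mu_1$ is doubling on $\bR^d_+$, so $(\overline{\Omega_T},\mu_1,d_{\mathrm{par}})$ is a space of homogeneous type, and the hypotheses $\omega_0 \in A_q(\bR)$, $\omega_1 \in A_p(\bR^d_+,\mu_1)$ are exactly what is needed for the weighted Fefferman--Stein inequality and the strong-type $\cM_{\mu_1}$-bound in the mixed-norm space $L_{q,p}(\Omega_T,\omega\,d\mu_1)$, as implemented in \cite{MR3812104}. Applying these to the above bound, absorbing the $\cM_{\mu_1}(g^{q_0})$ term by taking $\delta_0$ small, and enlarging $\lambda_0$ to control the remaining lower-order contributions, yields \eqref{main-thm-estc}. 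The main obstacle I anticipate is arranging the decomposition so that it is compatible with both the zero Dirichlet condition and the $\bH^{-1}$-norm's $F_0/x_d$ component: the weighted Hardy inequality of Section \ref{sec3} must be combined with the boundary regularity of $v$ from Corollary \ref{cor5.2} so that the $L_{q_0}(\mu_1)$-norms of $w$ and $u/x_d$ match the target right-hand side, and the Sobolev-embedding constraints of \eqref{eq3.19}--\eqref{eq3-2.19} must leave room for the choice $q_0 < \min(p,q)$; handling the boundary case $z_0 \in \{x_d=0\}$ uniformly in scale is what makes the underlying measure $\mu_1$ (rather than $\mu$) the correct one for the harmonic-analysis step.
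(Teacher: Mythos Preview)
Your overall strategy---mean oscillation estimates via a homogeneous/inhomogeneous decomposition, Corollary \ref{cor5.2} for the homogeneous part, Theorem \ref{thm2} at an auxiliary exponent $q_0$ for the inhomogeneous part, then the weighted mixed-norm Fefferman--Stein and maximal theorems from \cite{MR3812104}---is exactly the paper's approach, and the reduction to the a priori estimate followed by approximation from Theorem \ref{thm2} is also what the paper does.

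There is, however, one concrete point where your plan as written would fail: you propose to control the mean oscillation of $g=x_d^\alpha|Du|$, but Corollary \ref{cor5.2} does \emph{not} give $C^{1/2,1}$ regularity of $x_d^\alpha D_d v$ for the homogeneous piece (the coefficients are merely measurable in $x_d$, so $D_d v$ need not be H\"older). The paper instead runs the sharp-function argument for the three quantities $x_d^\alpha D_{x'}u$, $x_d^\alpha \cU$ with $\cU=a_{dj}D_ju$, and $\sqrt{\lambda}\,x_d^\alpha u$, which are precisely the ones covered by \eqref{eq11.27}--\eqref{eq11.29}; the full $x_d^\alpha D_d u$ is only recovered \emph{after} the mixed-norm bound via the ellipticity identity $a_{dd}D_du=\cU-\sum_{j<d}a_{dj}D_ju$. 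By contrast, the obstacles you anticipate (the $F_0/x_d$ component of $\bH^{-1}$, the weighted Hardy inequality, the embedding constraints \eqref{eq3.19}--\eqref{eq3-2.19}) do not actually enter this proof: once the decomposition is set up as in Proposition \ref{G-approx-propos} (the inhomogeneous piece is solved \emph{globally} with localized data, so no lateral boundary matching is needed), the argument is pure harmonic analysis on $(\overline{\Omega_T},\mu_1)$.
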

\begin{remark}\label{rem2.6} Let us consider the special case when $\omega_{1}(x_d)=x_d^\beta$. It is easily seen that $\omega_{1} \in A_p(\bR^{d}_+, \mu_1)$ if and only if $\beta \in (\alpha -1, (1-\alpha) (p-1))$. Therefore, from Theorem \ref{thm1.4} we obtained the estimate and solvability in the space ${\sH}^1_{q,p}(\Omega_T,x_d^{\gamma} dz)$, where $\gamma=\beta+\alpha p-\alpha\in (p\alpha-1,p-1)$.
\end{remark}

Remark \ref{rem2.6} implies the following result which is a generalization of Corollary \ref{main-thrm}.
\begin{remark}
            \label{rem2.7}
Let $\alpha \in (-\infty, 1)$, $\kappa \in (0,1)$, $R_0\in (0,\infty)$,  $1<p_0 \le p<\infty$,  $p^*\in [1,p)$, and $\gamma\in (p\alpha-1,p-1)$. Assume that \eqref{eq3.19}-\eqref{eq3-2.19} is satisfied with $\tilde\alpha=\gamma/(p-1)$ in place of $\alpha$. Then there exists $\delta_0=\delta_0(d,\kappa,\alpha, \gamma, p_0,p) \in (0,1)$ such that the following assertion holds.  Suppose that \eqref{ellipticity} and  \textup{Assumption} \ref{assump1} \textup{(}$\delta_0, R_0$\textup{)} are satisfied. If $u\in \sH^1_{p_0}(Q_2^+, x_{d}^{\gamma+ \tilde{\alpha} (p_0-p)}dz)$ is a weak solution of \eqref{eqn-entire}, $F\in L_p(Q_2^+, x_{d}^{\gamma}dz)^{d}$, and $f\in L_{p^*}(Q_2^+, x_d^{\tilde\alpha(p^*-1)}dz)$, then $u \in \sH_p^1(Q_1^+, x_{d}^{\gamma}dz)$ and
\begin{align}  \label{remark-estz}
& \| u\|_{L_p(Q_{1}^+, x_d^{\gamma} dz)}
+\|D u\|_{L_p(Q_{1}^+, x_d^{\gamma} dz)} \nonumber\\
&\leq  N \|u\|_{L_1(Q_2^+)} + N \|Du\|_{L_1(Q_2^+)} \\ \nonumber
& \qquad + N\| F\|_{L_p(Q_2^+, x_d^{\gamma} dz)} + N\| f\|_{L_{p^*}(Q_2^+, x_d^{\tilde\alpha(p^*-1)}dz)},
\end{align}
where $N=N(d,\kappa,\alpha, \gamma, p, p^*, R_0)>0$.\end{remark}
\noindent
Though the proof of the above result is similar to that of Corollary \ref{main-thrm},  we  provide it in Appendix \ref{App-C} for completeness. Note also that it is possible to extend the result to equations with unbounded lower-order coefficients as in \cite{DK15}. However, we choose not to include it here for simplicity.

We now consider the elliptic equation
\begin{equation} \label{ell-eqn}
\left\{
\begin{aligned}
-D_i (x_d^\alpha [a_{ij}(x) D_j u - F_i]) + \lambda x_d^\alpha u = \sqrt{\lambda} x_d^\alpha  f(x) \quad &\text{in} \ \bR^d_+\\
u  = 0  \quad &\text{on} \ \partial \bR^d_+,
\end{aligned} \right.
\end{equation}
where $(a_{ij}): \bR_+^d \rightarrow \bR^{d\times d}$, $F: \bR^d_+ \rightarrow \bR^d$, and $f: \bR_+^d \rightarrow \bR$ are independent of $t$. Note  that \eqref{ellipticity}  and Assumption \ref{assump1} can be stated similarly in the time-independent case. Also, for each weight $\omega: \bR^d_+ \rightarrow \bR_+$ and for $p \in (1, \infty)$, a function $u \in \sW^{1}_{p}(\bR^d_+, \omega d\mu_1)$ is said to be a weak solution of \eqref{ell-eqn} if
\[
\int_{\bR^d_+} x_d^\alpha [a_{ij}D_j u - F_i] D_i \varphi(x)\, dx + \lambda \int_{\bR^d_+} x_d^\alpha u(x) \varphi(x)\, dx= \sqrt{\lambda} \int_{\bR^d_+} x_d^\alpha f(x) \varphi(x)\, dx
\]
for all $\varphi \in C_0^\infty(\bR^d_+)$.
\begin{theorem} \label{ell-thm} Let $\alpha \in (-\infty, 1)$, $\kappa \in (0,1)$,  $R_0 \in (0, \infty)$, $p, K \in (1,\infty)$. Then there exist
$$
\delta_0=\delta_0(d, \kappa, \alpha, p, q,  K)\in (0,1) \quad \text{and}\quad
\lambda_0=\lambda_0(d, \kappa, \alpha, p, q, K)\ge 0,
$$
such that the following statements hold. Suppose that \eqref{ellipticity} and \textup{Assumption \ref{assump1}~($\delta_0, R_0$)} are satisfied, and $\omega \in A_p(\bR^d_+, \mu_1)$ such that $[\omega]_{A_p(\bR^d_+, \mu_1)}\le K$. If $u\in \sW^{1}_{p}(\bR^d_+,\omega x_d^{\alpha p}d\mu_1)$ is a weak solution of
\eqref{ell-eqn} for some $\lambda~\ge~\lambda_0 R_0^{-2}$, $f\in L_{p}(\bR^d_+,\omega x_d^{\alpha p}d\mu_1)$, and $F \in L_{p}(\bR^d_+,\omega x_d^{\alpha p}d\mu_1)^d$, then we have
\begin{align*}
&\|D u\|_{L_{p}(\bR^d_+,\omega x_d^{\alpha p}d\mu_1)}+\sqrt{\lambda} \|u\|_{L_{p}(\bR^d_+,\omega x_d^{\alpha p}d\mu_1)}\nonumber\\
&\leq N \|F\|_{L_{p}(\bR^d_+,\omega x_d^{\alpha p}d\mu_1)} + N\|f\|_{L_{p}(\bR^d_+,\omega x_d^{\alpha p}d\mu_1)},
\end{align*}
where $N=N(d, \kappa, \alpha, p, q,  K)>0$.
Moreover, for any $\lambda > \lambda_0 R_0^{-2}$,
$$
f\in L_{p}(\bR^d_+, \omega x_d^{\alpha p}d\mu_1), \quad \text{and}\quad
F \in L_{p}(\bR^d_+, \omega x_d^{\alpha p}d\mu_1)^d,
$$
there exists a unique weak solution  $u\in \sW^{1}_{p}(\bR^d_+,  \omega x_d^{\alpha p}d\mu_1)$ to \eqref{ell-eqn}.
\end{theorem}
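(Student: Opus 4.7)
The plan is to deduce Theorem \ref{ell-thm} from its parabolic counterpart Theorem \ref{thm1.4} by viewing an elliptic solution as a time-independent parabolic solution modulated by a time cutoff. The argument splits into an a priori estimate obtained via a cutoff-and-rescale trick and an existence statement obtained via the method of continuity.

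\textbf{Step 1 (A priori estimate).} Let $u \in \sW^1_p(\bR^d_+, \omega x_d^{\alpha p}\,d\mu_1)$ be a weak solution of \eqref{ell-eqn}. Fix a nontrivial $\eta \in C_0^\infty(\bR)$ and, for each integer $k \geq 1$, set $\eta_k(t) := \eta(t/k)$ and $w_k(t,x) := \eta_k(t) u(x)$. Because $a_{ij}$, $F$, $f$, and $u$ are time-independent and $u$ solves \eqref{ell-eqn}, a direct computation shows that $w_k$ is a weak solution on $\Omega_\infty := \bR \times \bR^d_+$ of
\begin{equation*}
x_d^\alpha (\partial_t w_k + \lambda w_k) - D_i\bigl(x_d^\alpha[a_{ij} D_j w_k - \eta_k F_i]\bigr) = \sqrt{\lambda}\, x_d^\alpha \bigl[\eta_k f + \lambda^{-1/2}\eta_k' u\bigr].
\end{equation*}
Applying Theorem \ref{thm1.4} with $T=+\infty$, $\omega_0 \equiv 1 \in A_q(\bR)$, and any fixed $q \in (1,\infty)$ yields
\begin{equation*}
\|Dw_k\|_{L_{q,p}} + \sqrt\lambda\,\|w_k\|_{L_{q,p}} \leq N\bigl(\|\eta_k F\|_{L_{q,p}} + \|\eta_k f\|_{L_{q,p}} + \lambda^{-1/2}\|\eta_k' u\|_{L_{q,p}}\bigr),
\end{equation*}
where all mixed-norms are with respect to $\omega x_d^{\alpha p}\,d\mu_1$ on $\Omega_\infty$. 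The product structure $w_k = \eta_k \otimes u$ factors every term as a product of an $L_q(\bR)$-norm and a spatial $L_p$-norm; dividing by $\|\eta_k\|_{L_q(\bR)}$ and using the scaling identity $\|\eta_k'\|_{L_q}/\|\eta_k\|_{L_q} = k^{-1}\|\eta'\|_{L_q}/\|\eta\|_{L_q}$, the error term proportional to $\|u\|_{L_p}$ vanishes in the limit $k \to \infty$, which delivers the desired estimate.

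\textbf{Step 2 (Existence and uniqueness).} Uniqueness follows by applying Step 1 with $F = f = 0$. For existence, use the method of continuity on the family
\begin{equation*}
L_\tau u := -D_i\bigl(x_d^\alpha [\tau a_{ij} + (1-\tau)\delta_{ij}] D_j u\bigr) + \lambda x_d^\alpha u, \quad \tau \in [0,1].
\end{equation*}
The interpolated coefficients satisfy \eqref{ellipticity} with the same $\kappa$ and Assumption \ref{assump1} with the same $\delta_0$, so the estimate of Step 1 holds uniformly in $\tau$. It then suffices to prove solvability at $\tau = 0$, where the equation has coefficients $\delta_{ij}$. For this diagonal equation, base solvability at $p = 2$, $\omega = 1$ comes from Lax--Milgram applied to the coercive bilinear form underlying \eqref{ener-0728}; solvability in the general weighted $L_p$ class then follows by approximating $f, F$ in the weighted norm by smooth compactly supported data (for which the $L_2$ solution exists) and passing to the limit via the $\tau = 0$ version of Step 1.

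\textbf{Main obstacle.} The cutoff-rescale mechanism for the a priori estimate is clean once Theorem \ref{thm1.4} is available, so the principal delicate point is supplying a base case strong enough to seed the method of continuity in the weighted mixed-norm regime; establishing solvability at $\tau = 0$ (with coefficients $\delta_{ij}$) without looping back to the parabolic theory is where care is needed. An alternative route avoiding the continuity argument is to construct the elliptic solution as a $k \to \infty$ limit of the parabolic solutions to Theorem \ref{thm1.4} with data $\eta_k(t) f(x)$ and $\eta_k(t) F(x)$, with compactness supplied by the uniform bounds from Step 1; here the subtlety lies in extracting the time-independent limit, since time-independent functions do not themselves belong to the parabolic space.
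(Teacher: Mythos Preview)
Your approach is essentially the same as the paper's: the paper states only that Theorem~\ref{ell-thm} ``can be derived from Theorem~\ref{thm1.4} by viewing solutions to elliptic equations as steady state solutions of the corresponding parabolic equations'' and refers to \cite[Theorem~2.6]{Krylov} and \cite[Theorem~1.2]{D-P} for details, omitting the proof entirely. Your Step~1 (time-cutoff $\eta_k(t)u(x)$, apply Theorem~\ref{thm1.4}, divide by $\|\eta_k\|_{L_q}$, send $k\to\infty$) is exactly the standard mechanism those references use, and your Step~2 outline via the method of continuity is the natural companion.

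One genuine point you flag yourself deserves emphasis: in Step~2, after solving the $\tau=0$ problem in $L_2$ for smooth compactly supported data, you still need to verify that this $L_2$ solution actually lies in $\sW^1_p(\bR^d_+,\omega x_d^{\alpha p}d\mu_1)$ before you may invoke the a~priori estimate from Step~1 to pass to the limit. This membership is not automatic, and without it the approximation argument is circular. The paper's own machinery handles the analogous issue in the parabolic setting (see Appendix~\ref{proof-thm4.6}, Case~II, where a localization-and-decay argument shows the $L_2$ solution belongs to the target $L_p$ space); an elliptic analogue of that argument, or a direct appeal to the parabolic existence in Theorem~\ref{thm1.4} as in your ``alternative route,'' would close the gap. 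Since the paper itself defers all of this to the cited references, your sketch is in fact more explicit than what the paper provides.
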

Theorem \ref{ell-thm} can be derived from Theorem \ref{thm1.4} by viewing solutions to elliptic equations as steady state solutions of the corresponding parabolic equations. See, for example, the proofs of \cite[Theorem 2.6]{Krylov} or  \cite[Theorem 1.2]{D-P} for details. We therefore omit the proof.

We are also interested in the local regularity estimates for elliptic equations. Consider the equation
\begin{equation} \label{eq.0612}
\left\{
\begin{aligned}
- D_i(x_d^\alpha[ a_{ij}(x) D_j u - F_i]) = x_d^\alpha f  \quad &\text{in} \ B_2^+\\
 u =  0  \quad  &\text{on} \ B_2 \cap \{x_d =0\}.
\end{aligned} \right.
\end{equation}
For a given weight $\omega: B_2^+ \rightarrow \bR_+$, a function $u \in \sW^{1}_{p}(B_2^+, \omega d\mu)$ is said to be a weak solution of \eqref{eq.0612} if
\[
\int_{B_2^+} x_d^\alpha \big( a_{ij} D_j u - F_i  \big) D_i \varphi dx = \int_{B_2^+} x_d^\alpha f(x) \varphi(x) dx, \ \forall \ \varphi \in C_0^\infty(B_2^+).
\]
For each $p  \in (1,\infty), \tilde{\alpha} \in (-\infty, 1)$, let $\hat{p} \in [1,p)$ satisfy
\begin{equation} \label{eq1.0612}
\left\{                               \begin{aligned}
\frac{1}{\hat{p}} \le \frac{1}{d+\tilde{\alpha}_{-}}+ \frac{1}{p}\quad&\text{when}\ \hat{p}>1,\\
\frac{1}{\hat{p}} < \frac{1}{d+ \tilde{\alpha}_{-}}+\frac{1}{p} \quad&\text{when}\ \hat{p}=1,
\end{aligned}\right.
\end{equation}
where $\tilde{\alpha}_{-} = \max\{-\tilde{\alpha}, 0\}$. Our local regularity result for elliptic equations is the following corollary. 
\begin{corollary}
                \label{ell-local-th}
Let $\alpha \in (-\infty, 1)$, $\kappa \in (0,1)$, $R_0\in (0,\infty)$, $1<p_0 \le p<\infty$, and $\gamma \in (p\alpha -1, p-1)$.  Then there exists $\delta_0=\delta_0(d,\kappa,\alpha, \gamma, p_0, p) \in (0,1)$ such that the following assertion holds.  Suppose that $(a_{ij})$ satisfies \eqref{ellipticity} and  \textup{Assumption} \ref{assump1}~\textup{(}$\delta_0, R_0$\textup{)}.  For  $\hat{p} \in [1,p)$ and $\tilde{\alpha} = \gamma/(p-1)$ satisfying \eqref{eq1.0612}, if $u\in \sW^{1}_{p_0}(B_2^+, x_{d}^{\gamma +\tilde{\alpha}(p_0-p)}dx)$ is a weak solution of \eqref{eq.0612}, $F\in L_p(B_2^+, x_d^{\gamma} dx)^{d}$, and $f\in L_{\hat{p}}(B_2^+, x_d^{\tilde{\alpha}(\hat{p}-1)}dx)$, then $u \in \sW^1_{p}(B_1^+, x_d^{\gamma}dx)$ and
\begin{align*}
  \|u\|_{\sW^1_p(B_{1}^+, x_d^{\gamma}dx)} & \leq  N \|u\|_{W^1_1(B_2^+, dx)} + N\|F\|_{L_p(B_2^+, x_d^{\gamma}dx)} \\ \notag
  & \quad + N\|f\|_{L_{\hat{p}}(B_2^+, x_d^{\tilde{\alpha}(\hat{ p} -1)}dx)},
\end{align*}
where $N=N(d,\kappa,\alpha, p, \hat{p}, R_0)>0$.
\end{corollary}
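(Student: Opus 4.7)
The plan is to adapt the proof of Remark \ref{rem2.7} (given in Appendix \ref{App-C} for the parabolic case) to the elliptic setting, using Theorem \ref{ell-thm} in place of Theorem \ref{thm1.4}. The three ingredients will be: (i) localizing the equation via a cutoff, (ii) applying the global weighted $L_p$-estimate on $\bR^d_+$, and (iii) removing the resulting commutator terms by a Campanato-type iteration in the radius, combined with a bootstrap in the integrability exponent when $p_0<p$. By Remark \ref{rem2.6}, the weight $\omega_1(x)=x_d^{\gamma-\alpha(p-1)}$ lies in $A_p(\bR^d_+,\mu_1)$ with a bound depending only on $\alpha,\gamma,p$ (since $\gamma\in(p\alpha-1,p-1)$), and $\omega_1\,x_d^{\alpha p}\,d\mu_1$ coincides with $x_d^\gamma\,dx$, so Theorem \ref{ell-thm} applies in the right function space. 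For $1\le r_1<r_2\le 2$ I pick $\eta\in C_c^\infty(B_{r_2})$ with $\eta\equiv 1$ on $B_{r_1}^+$ and $|D\eta|\le N/(r_2-r_1)$, and set $v=u\eta$. With $\lambda_\ast=\lambda_0 R_0^{-2}$ the threshold from Theorem \ref{ell-thm}, a direct computation shows that $v$ satisfies
\begin{equation*}
-D_i\bigl(x_d^\alpha[a_{ij}D_j v-\hat F_i]\bigr)+\lambda_\ast x_d^\alpha v=\sqrt{\lambda_\ast}\,x_d^\alpha\hat f\quad\text{in }\bR^d_+,
\end{equation*}
with $v=0$ on $\partial\bR^d_+$, where $\hat F_i=F_i\eta+a_{ij}u D_j\eta$ and $\sqrt{\lambda_\ast}\,\hat f=f\eta+F_i D_i\eta-a_{ij}(D_j u)D_i\eta+\lambda_\ast u\eta$.

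To invoke Theorem \ref{ell-thm} I will first solve the modified equation in $\sW^1_p(\bR^d_+,x_d^\gamma\,dx)$ and identify its solution with $v$ by uniqueness in $\sW^1_{p_0}$ (by duality and density of smooth compactly supported functions). Estimating each piece, the $F$- and $D\eta$-terms contribute at most $N\|F\|_{L_p(B_{r_2}^+,x_d^\gamma dx)}+N(r_2-r_1)^{-1}[\|u\|_{L_p(B_{r_2}^+,x_d^\gamma dx)}+\|Du\|_{L_p(B_{r_2}^+,x_d^\gamma dx)}]$. The delicate term is $f\eta$, which I will recast as $x_d^\alpha f\eta=D_i(x_d^\alpha G_i)$ with $\|G\|_{L_p(x_d^\gamma dx)}\le N\|f\|_{L_{\hat p}(B_2^+,x_d^{\tilde\alpha(\hat p-1)}dx)}$, by means of the weighted Sobolev embedding $\sW^1_{p'}(B_2^+,\sigma\,dx)\hookrightarrow L_{\hat p'}(B_2^+,\sigma\,dx)$ for the appropriate dual weight $\sigma$ and exponent $\hat p'=\hat p/(\hat p-1)$; this is available precisely under \eqref{eq1.0612}. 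The representation absorbs $f$ into the $\hat F$-slot. Finally, a Campanato iteration along a geometric sequence of radii in $[1,3/2]$ removes the $\|Du\|_{L_p(B_{r_2}^+,x_d^\gamma dx)}/(r_2-r_1)$ term on the right, and H\"older's inequality in the final radius bump from $3/2$ to $2$ converts the residual $L_p(x_d^\gamma dx)$-norms of $u$ and $Du$ into the required $L_1(dx)$-norms on $B_2^+$ (using $\gamma>-1$, so $x_d^\gamma\in L^1_{\loc}$).

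When $p_0<p$ I will iterate the whole argument along a finite chain $p_0<p_1<\cdots<p_n=p$: at each step the weighted Sobolev embedding promotes $\sW^1_{p_k}$-regularity on $B_{\rho_k}^+$ to $\sW^1_{p_{k+1}}$-regularity on $B_{\rho_{k+1}}^+$ for some $\rho_{k+1}<\rho_k$, with the increment dictated by \eqref{eq1.0612}. The hard part will be the representation step for $f$: encoding a low-integrability pure source as a divergence of an $L_p(x_d^\gamma dx)$-vector field that respects the zero Dirichlet condition forces the threshold \eqref{eq1.0612}, which is the elliptic analog — with the spatial dimension $d$ in place of the parabolic dimension $d+2$ — of \eqref{eq3.19}-\eqref{eq3-2.19} used in Remark \ref{rem2.7}.
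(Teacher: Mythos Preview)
Your route is genuinely different from the paper's, and it has a real gap in the bootstrap step. The paper follows the proof of Remark~\ref{rem2.7} verbatim: it does \emph{not} apply Theorem~\ref{ell-thm} to the localized equation for $v=u\eta$. Instead it uses a duality argument: one fixes $G,g\in C_0^\infty(B_1^+)$ with unit $L_q(x_d^{\tilde\gamma}dx)$-norm, solves the adjoint problem for $v$ in $\sW^1_{q}(\bR^d_+,x_d^{\tilde\gamma}dx)$ via Theorem~\ref{ell-thm} (here $q=p/(p-1)$, $\tilde\gamma=\alpha q-\tilde\alpha$), applies the weighted embedding of Lemma~\ref{imbed-elli} to $v$, and then tests both equations. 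In the resulting pairing $\int x_d^\alpha v\,\tilde f$, the commutator term $a_{ij}(D_ju)D_i\eta$ in $\tilde f$ is controlled by $\|x_d^\alpha v\|_{L_{q^*}}\|x_d^{\tilde\alpha}Du\|_{L_{p^*}(d\tilde\mu)}$ with $p^*\le p_0<p$, which is finite by assumption. This is precisely what produces a self-improving estimate.

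Your direct approach collapses at the point where you apply Theorem~\ref{ell-thm} to the equation for $u\eta$: that theorem requires $\hat f\in L_p(\bR^d_+,x_d^\gamma dx)$, and $\hat f$ contains $a_{ij}(D_ju)D_i\eta$, so you need $Du\in L_p(B_{r_2}^+,x_d^\gamma dx)$ before you can even state the estimate you then plan to iterate. Your remedy---``the weighted Sobolev embedding promotes $\sW^1_{p_k}$-regularity to $\sW^1_{p_{k+1}}$-regularity''---is not correct: the embedding raises the integrability of $u$, not of $Du$; the gain on $Du$ must come from the PDE, and your use of the PDE already presupposes the $L_{p_{k+1}}$ bound on $Du$. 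The representation $x_d^\alpha f\eta=D_i(x_d^\alpha G_i)$ that you propose for $f$ is essentially the dual of Lemma~\ref{imbed-elli}; if you carried out the same construction for \emph{all} the low-integrability pieces of $\hat f$ (including $a_{ij}(D_ju)D_i\eta$ and $\lambda_\ast u\eta$), you would recover a workable scheme---but that amounts to re-deriving the duality argument by hand, and you would still need to justify the divergence representation (e.g., by solving an auxiliary equation with simple coefficients). The paper's approach avoids this detour entirely.
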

The proof of  Corollary \ref{ell-local-th} is sketched in Appendix \ref{App-C}. Note that the condition \eqref{eq1.0612} is due to the corresponding weighted embedding inequality in elliptic case (see Remark \ref{imbd-remark} and Lemma \ref{imbed-elli}), which is optimal.

\begin{remark} From the proofs below, we can see that all of the above results can be extended to systems of equations satisfying the strong ellipticity condition on coefficients. We stated them for scalar equations only for simplicity.
\end{remark}

\section{Sobolev spaces and  \texorpdfstring{$L_2$}{L2}-solutions}
                \label{sec3}
\begin{lemma}[Hardy's inequality]
                    \label{lem3.1}
For any $p\in [1,\infty)$, $\alpha\in (-\infty,p-1)$, and $u\in \sW^{1}_{p}(B_1^+,d\mu)$, we have
$$
\|u/x_d\|_{L_p(B_1^+,d\mu)}\le N\|D_du\|_{L_p(B_1^+,d\mu)},
$$
where $N=N(p,\alpha)>0$ is a constant.
\end{lemma}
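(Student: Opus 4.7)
The plan is to reduce the inequality to the one-dimensional weighted Hardy inequality along vertical slices and then integrate in the transverse variables. By the definition of $\sW^{1}_{p}(B_1^+, d\mu)$, it suffices to prove the estimate for $u \in C^\infty(\overline{B_1^+})$ that is compactly supported in $\overline{B_1^+}$ and vanishes in a neighborhood of $\{x_d = 0\}$. Writing $B_1' := \{x' \in \bR^{d-1} : |x'| < 1\}$ and $r(x') := \sqrt{1 - |x'|^2}$, this vanishing lets me apply the fundamental theorem of calculus in $x_d$ to obtain
\[
|u(x', x_d)| \le \int_0^{x_d} |D_d u(x', s)|\,ds, \qquad x' \in B_1', \ x_d \in (0, r(x')).
\]

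The heart of the argument is the following one-dimensional inequality: for every $g \ge 0$ on $(0, r)$, $p \in [1, \infty)$, and $\alpha < p - 1$,
\[
\int_0^r \Big(\int_0^t g(s)\,ds\Big)^p t^{\alpha - p}\,dt \le \Big(\frac{p}{p-1-\alpha}\Big)^p \int_0^r g(s)^p s^\alpha\,ds.
\]
When $p > 1$ I would prove this by writing $t^{\alpha - p} = (\alpha - p + 1)^{-1}\tfrac{d}{dt}\,t^{\alpha - p + 1}$ and integrating by parts in $t$. Under the reduction above, $G(t) := \int_0^t g(s)\,ds$ vanishes for $t$ small, so the boundary contribution at $t = 0$ is zero; the boundary contribution at $t = r$ equals $(\alpha - p + 1)^{-1} G(r)^p r^{\alpha - p + 1}$, which is non-positive precisely because $\alpha - p + 1 < 0$, and may be discarded. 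Applying H\"older with exponents $p/(p-1)$ and $p$, after splitting $t^{\alpha - p + 1} = t^{(\alpha - p)(p-1)/p} \cdot t^{\alpha/p}$, then closes the bound. For $p = 1$ I would instead use Fubini to rewrite the double integral as $\int_0^r g(s) \int_s^r t^{\alpha - 1}\,dt\,ds$ and note that $\int_s^r t^{\alpha - 1}\,dt \le s^\alpha / (-\alpha)$ since $\alpha < 0 = p - 1$.

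Applying this inequality with $g(s) := |D_d u(x', s)|$ for each $x' \in B_1'$ and integrating in $x'$ via Fubini gives
\[
\int_{B_1^+} |u|^p x_d^{\alpha - p}\,dx \le \Big(\frac{p}{p-1-\alpha}\Big)^p \int_{B_1^+} |D_d u|^p x_d^\alpha\,dx,
\]
which is the desired estimate after extending back to $\sW^{1}_{p}$ by density. The only genuinely delicate point is controlling the sign of the boundary term at $t = r(x')$ in the integration by parts: since test functions in $\sW^{1}_{p}(B_1^+, d\mu)$ are not required to vanish on the curved part of $\partial B_1^+$, that term is not manifestly zero, and it is precisely the hypothesis $\alpha < p - 1$ that makes its sign favorable and permits dropping it. I do not anticipate any further obstacle.
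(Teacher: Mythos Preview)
Your proof is correct and yields the same sharp constant $N = p/(p-1-\alpha)$, but the route differs from the paper's. The paper writes
\[
u(x',x_d)/x_d = \int_0^1 (D_d u)(x', s x_d)\,ds
\]
and applies Minkowski's inequality directly in $L_p(B_1^+, d\mu)$, followed by the change of variables $x_d \mapsto x_d/s$; the condition $\alpha < p-1$ then enters only to make $\int_0^1 s^{-(\alpha+1)/p}\,ds$ finite. This handles all $p \ge 1$ uniformly in three lines with no integration by parts and no boundary terms to track. Your argument is the classical one-dimensional Hardy proof: integrate by parts against $t^{\alpha-p}$ for $p>1$, use Fubini for $p=1$, and slice. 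The advantage of your approach is that it is the textbook derivation and makes transparent exactly where $\alpha < p-1$ is used (the sign of the boundary term at $t=r(x')$), which you identified correctly. The paper's approach is shorter and avoids the $p>1$ versus $p=1$ split, but is perhaps less illuminating about the role of the endpoint.
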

\begin{proof}
We write
\begin{equation}
                                        \label{eq2.37}
u(x',x_d)/x_d=\int_0^1 (D_du)(x',sx_d)\,ds.
\end{equation}
By the Minkowski inequality and a change of variables,
\begin{align*}
&\|u/x_d\|_{L_p(B_1^+,d\mu)}\le \int_0^1 \|D_du(\cdot,s\cdot)\|_{L_p(B_1^+,d\mu)}\,ds\\
&\le \|D_du\|_{L_p(B_1^+,d\mu)}\int_0^1 s^{-(\alpha+1)/p}\,ds\le N\|D_du\|_{L_p(B_1^+,d\mu)},
\end{align*}
where we used $\alpha<p-1$ in the last inequality.
\end{proof}

\begin{lemma}
            \label{trace-u}
Let $\alpha  \le-1$ and $p \in [1, \infty)$. Then for any $u \in  W^1_p(B_1^+,d\mu)$, we have $u(x', 0) =0$ in sense of trace for a.e. $x' \in B_1'$. Moreover, $W^1_p(B_1^+,d\mu)=\sW^1_p(B_1^+,d\mu)$.
\end{lemma}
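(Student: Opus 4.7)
The plan has two parts. First, I would show that the trace $u(x',0)$ vanishes a.e.\ because the weight $x_d^\alpha$ is not integrable near $x_d=0$ when $\alpha\le -1$, so any nonzero boundary value would force $\|u\|_{L_p(B_1^+,d\mu)}=\infty$. Second, once the trace is known to vanish, Hardy's inequality (Lemma \ref{lem3.1}) applies to $u$ itself, and a vertical cutoff in $x_d$ furnishes an approximating sequence placing $u$ in $\sW^1_p(B_1^+,d\mu)$.

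For the trace: since $\alpha\le 0$ we have $x_d^\alpha\ge 1$ on $(0,1)$, so $u$ and $Du$ lie in the \emph{unweighted} $L_p(B_1^+)$. By Fubini, for a.e.\ $x'\in B_1'$ the slice $g(x_d):=u(x',x_d)$ belongs to $W^{1,p}((0,r(x')))$ with $r(x')=\sqrt{1-|x'|^2}$, hence admits an absolutely continuous representative with $g'=D_d u(x',\cdot)$ a.e.; since $g'\in L^1$ on the interval, the one-sided limit $g(0^+):=\lim_{x_d\to 0^+}g(x_d)$ exists in $\bR$. If $g(0^+)\neq 0$, continuity yields some $\delta>0$ with $|g(x_d)|\ge |g(0^+)|/2$ on $(0,\delta)$, so
\[
\int_0^1|g(x_d)|^p x_d^\alpha\,dx_d\ge (|g(0^+)|/2)^p\int_0^\delta x_d^\alpha\,dx_d=+\infty
\]
because $\alpha\le -1$. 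But Fubini forces this slicewise integral to be finite for a.e.\ $x'$; hence $g(0^+)=0$ a.e., which is the trace claim.

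With the trace vanishing, the fundamental theorem of calculus on each slice gives $u(x',x_d)/x_d=\int_0^1 D_d u(x',t x_d)\,dt$ for a.e.\ $x'$. The Minkowski--rescaling computation from the proof of Lemma \ref{lem3.1} applies verbatim to this identity, yielding
\[
\|u/x_d\|_{L_p(B_1^+,d\mu)}\le N\|D_d u\|_{L_p(B_1^+,d\mu)},
\]
even though $u$ is only assumed to lie in $W^1_p(B_1^+,d\mu)$. Now pick $\eta\in C^\infty([0,\infty))$ with $\eta\equiv 0$ on $[0,1]$, $\eta\equiv 1$ on $[2,\infty)$, and set $u_\epsilon(x):=u(x)\eta(x_d/\epsilon)$. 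The product rule gives the extra gradient term $u\,\epsilon^{-1}\eta'(x_d/\epsilon)$ in $D_d u_\epsilon$; supported on $\{\epsilon<x_d<2\epsilon\}$ where $\epsilon^{-1}\le 2x_d^{-1}$, its $L_p(d\mu)$-norm is dominated by $N\big(\int_{\{\epsilon<x_d<2\epsilon\}}|u/x_d|^p x_d^\alpha\,dx\big)^{1/p}$, which tends to $0$ as $\epsilon\to 0$ by dominated convergence against the Hardy bound. The remaining pieces $\eta(x_d/\epsilon)Du\to Du$ and $u_\epsilon\to u$ in $L_p(B_1^+,d\mu)$ by dominated convergence. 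Since each $u_\epsilon$ vanishes on $\{x_d\le\epsilon\}$, where the weight is bounded on its support, a standard Friedrichs mollification provides a smooth approximation lying in $\sW^1_p$, and $u\in\sW^1_p(B_1^+,d\mu)$ follows.

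The delicate point I want to flag is the apparent circularity: Lemma \ref{lem3.1} is stated for $u\in\sW^1_p$, the very space we are trying to show equals $W^1_p$. The resolution is that the proof of Lemma \ref{lem3.1} only uses the pointwise identity $u(x',x_d)/x_d=\int_0^1 D_d u(x',sx_d)\,ds$, which Part~1 supplies for any $u\in W^1_p(B_1^+,d\mu)$ when $\alpha\le -1$. The full strength $\alpha\le -1$ (as opposed to merely $\alpha<0$) enters only through $\int_0^\delta x_d^\alpha\,dx_d=\infty$, the single ingredient that drives the trace to vanish.
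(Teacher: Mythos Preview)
Your proof is correct. Part 2 (the cutoff approximation) is essentially identical to the paper's argument, and you correctly identify and resolve the apparent circularity with Lemma \ref{lem3.1}. Your trace argument in Part 1, however, follows a somewhat different route. The paper derives a quantitative pointwise bound
\[
|u(x',x_d)|\le N x_d^{-\frac{1+\alpha}{p}}\Bigl(\int_0^{x_d}|u|^p y^\alpha\,dy\Bigr)^{1/p}
+ N x_d^{1-\frac{1+\alpha}{p}}\Bigl(\int_0^{x_d}|D_du|^p y^\alpha\,dy\Bigr)^{1/p}
\]
via the fundamental theorem of calculus and H\"older, then lets $x_d\to 0^+$ using that both exponents are nonnegative when $\alpha\le -1$. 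Your argument is instead qualitative: you observe that $x_d^\alpha\ge 1$ on $B_1^+$ forces $u\in W^{1,p}(B_1^+)$ unweighted, so slicewise absolute continuity gives a well-defined boundary limit $g(0^+)$, and a nonzero limit would make $\int_0^\delta|g|^p x_d^\alpha\,dx_d$ diverge since $\int_0^\delta x_d^\alpha\,dx_d=\infty$. Your approach is arguably more transparent about exactly where the threshold $\alpha\le -1$ enters, while the paper's bound carries extra information (an explicit decay rate in $x_d$) that is not needed for the lemma itself.
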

\begin{proof} For any $(x',x_d)\in B_1^+$, by the fundamental theorem of calculus and H\"{o}lder's inequality, we have for any $y\in (0,x_d)$,
\[
\begin{split}
|u(x', x_d)| & \leq |u(x', y)| + \int_{y}^{x_d} |D_d u(x', s)|\, ds  \\
&\leq  |u(x', y)| + N x_d^{1-\frac{1+\alpha}{p}}\left( \int_0^{x_d} |D_d u(x', s)|^ps^\alpha\, ds \right)^{1/p}.
\end{split}
\]
Integrating the above inequality with respect to $y\in (0,x_d)$ gives
\[
\begin{split}
 |u(x', x_d)| x_d
&\leq N \int_0^{x_d} \Big[ |u(x', y)|  +  x_d^{1-\frac{1+\alpha}{p}}\left( \int_0^{x_d} |D_d u(x', s)|^ps^\alpha \,ds \right)^{1/p} \Big]\, dy \\
& \leq N x_d^{1-\frac{1+\alpha}{p}}
\Big(\int_0^{x_d} |u(x', y)|^p y^{\alpha}\, dy \Big)^{1/p}\\
&\quad +Nx_d^{2-\frac{1+\alpha}{p}}\Big(\int_0^{x_d} |D_du(x', y)|^p y^{\alpha}\, dy \Big)^{1/p},
\end{split}
\]
where we used H\"{o}lder's inequality in the last  inequality to control the first term on the right-hand side. It then follows that
\[
\begin{split}
 |u(x', x_d)|  & \leq N x_d^{-\frac{1+ \alpha}{p}}\left(\int_0^{x_d} |u(x', y)|^p y^{\alpha}\, dy \right)^{1/p}  \\
 & \quad + N x_d^{1-\frac{1+\alpha}{p}}\left( \int_0^{x_d} |D_d u(x', y)|^p y^\alpha\, dy \right)^{1/p}.
\end{split}
\]
We take the $L_p$ norm of both sides of the above inequality in $x' \in B_{(1-x_d^2)^{1/2}}'$. As both the powers $-\frac{1+\alpha}{p}$ and $1-\frac{1+\alpha}{p}$ are nonnegative, by sending $x_d \rightarrow 0^+$, we obtain
\begin{equation}  \label{eq2.45}
              u(x', 0) = 0\quad \text{for} \ x' \in B_1'.
\end{equation}
This proves the first assertion of the lemma.

For the second assertion, it suffices to show that there is a sequence of functions $u_k\in W^1_p(B_1^+,d\mu)$, which vanish near $\{x_d=0\}$ and converge to $u$. We take a smooth function $\eta=\eta(x_d)$ such that $\eta=0$ when $x_d\le 0$ and $\eta=1$ when $x_d\ge 1$. For $k=1,2,\ldots$, we define $u_k=u\eta(kx_d)$. By the dominated convergence theorem, it is easily seen that $u_k\to u$ in $L_{p}(B_1^+,d\mu)$ as $k \rightarrow \infty$. Moreover, for $j=1,\ldots,d$,
$$
D_ju_k=\eta(kx_d) D_j u+u\delta_{dj}k\eta'(kx_d),
$$
where $\delta_{dj} =1$ if and only if $j =d$. Again by the dominated convergence theorem, we have
$$
\eta(kx_d) D_j u  \to D_j u\quad \text{in}\ L_{p}(B_1^+,d\mu).
$$
Since  
$$
|k\eta'(kx_d)|\le Nx_d^{-1}\chi_{(0,1/k)}(x_d),
$$
by using \eqref{eq2.45} and \eqref{eq2.37}, we get
$$
|uk\eta'(kx_d)|\le N|u|x_d^{-1}\chi_{(0,1/k)}(x_d)
\le \chi_{(0,1/k)}(x_d)\int_0^1 |D_du(x',sx_d)|\,ds.
$$
Now similar to the proof of Lemma \ref{lem3.1}, we have
$$
\|uk\eta'(kx_d)\|_{L_p(B_1^+,d\mu)}\le N\|D_du\|_{L_p(B_1^+\cap\{x_d\in (0,1/k)\},d\mu)}\to 0
$$
as $k\to \infty$. Therefore, we conclude that $u_k\to u$ in $W^1_p(B_1^+,d\mu)$. The lemma is proved.
\end{proof}

\begin{remark} \label{bdr-remark} As a result of Lemma \ref{trace-u}, we only need to impose the boundary condition for the solution of \eqref{eq3.23} when $\alpha \in (-1,1)$ as long as it is in $W_p^{1}(\bR^d_+, d\mu)$.
\end{remark}
\begin{lemma}[Weighted parabolic embedding]
                            \label{lem2.2}
Let $\alpha\in (-\infty, 1)$ and $q,q^*\in (1,\infty)$ satisfy
\begin{equation}
                        \label{eq2.06}
\left\{
\begin{aligned}
\frac{1}{q} \le  \frac{1}{d+2+ \alpha_-}+ \frac{1}{q^*} & \quad \text{if} \quad d \geq 2\\
\frac{1}{q}  \le  \frac{1}{4+ \alpha_-}+ \frac{1}{q^*} & \quad \text{if} \quad d =1.
\end{aligned} \right.
\end{equation}
Then for any $u\in \sH^1_q(Q_2^+, x_{d}^{\alpha q}d\mu_1)$, we have
\begin{equation}
                    \label{eq4.15bb}
\|x_{d}^{\alpha} u\|_{L_{q^*}(Q_2^+,d\mu_1)}\le N\|u\|_{\sH^1_{q}(Q_2^+, x_{d}^{\alpha q}d\mu_1)},
\end{equation}
where $N=N(d,\alpha, q, q^*)>0$ is a constant and $\alpha_- =\max\{-\alpha, 0\}$. The result still holds when $q^*=\infty$ and the inequalities in \eqref{eq2.06} are strict.
\end{lemma}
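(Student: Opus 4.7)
The plan is to reduce the claim to a weighted parabolic Sobolev embedding with underlying measure $\mu_1=x_d^{-\alpha}\,dxdt$ via the substitution $v:=x_d^\alpha u$, and to prove the reduced embedding by combining weighted spatial Sobolev inequalities with a parabolic time interpolation.

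First I would verify that $v:=x_d^\alpha u$ lies in $\cH^1_q(Q_2^+,d\mu_1)$ with norm controlled by $\|u\|_{\sH^1_q(Q_2^+,x_d^{\alpha q}d\mu_1)}$. The identities $\|x_d^\alpha u\|_{L_{q^*}(d\mu_1)}=\|v\|_{L_{q^*}(d\mu_1)}$, $\|u\|_{L_q(x_d^{\alpha q}d\mu_1)}=\|v\|_{L_q(d\mu_1)}$, and $\|D_{x'}u\|_{L_q(x_d^{\alpha q}d\mu_1)}=\|D_{x'}v\|_{L_q(d\mu_1)}$ are direct. Since $D_dv=x_d^\alpha D_du+\alpha x_d^{\alpha-1}u$, the extra singular term $\alpha x_d^{\alpha-1}u$ is controlled in $L_q(d\mu_1)$ by Hardy's inequality (Lemma~\ref{lem3.1} applied with weight exponent $\alpha(q-1)<q-1$, which holds since $\alpha<1$). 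A parallel rewriting of a representation $u_t=D_iF_i+F_0/x_d+f$ produces
\begin{equation*}
v_t=D_i(x_d^\alpha F_i)+x_d^\alpha(F_0-\alpha F_d)/x_d+x_d^\alpha f,
\end{equation*}
which places $v_t$ in $\bH^{-1}_q(Q_2^+,d\mu_1)$ with controlled norm; the $F_0/x_d$ slot in the definition of $\bH^{-1}_q$ is essential for absorbing the term coming from $D_d(x_d^\alpha)$. The target inequality reduces to $\|v\|_{L_{q^*}(Q_2^+,d\mu_1)}\le N\|v\|_{\cH^1_q(Q_2^+,d\mu_1)}$.

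For $d\ge 2$, the weight $x_d^{-\alpha}$ is locally integrable (since $\alpha<1$) and has doubling dimension $d+\alpha_-$: when $\alpha\le 0$ the weight $x_d^{\alpha_-}$ is, up to a constant, the radial part of Lebesgue measure on $\bR^{\alpha_-+1}$, and when $\alpha>0$ the weight $x_d^{-\alpha}$ is an $A_p$ weight for every $p>1$ with effective dimension $d$ (corresponding to $\alpha_-=0$). In the former case I would use the dimension-lift $\tilde v(x',y,t):=v(x',|y|,t)$, $y\in\bR^{\alpha_-+1}$, to convert the weighted spatial norms of $v$ on $B_2^+$ into unweighted norms of $\tilde v$ on a ball in $\bR^{d+\alpha_-}$; in the latter case, the $A_p$-weighted spatial Sobolev embedding yields the analogous inequality with effective dimension $d$. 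Either way, one gets $\|v(t,\cdot)\|_{L_r(B_2^+,d\mu_1)}\le N\|v(t,\cdot)\|_{W^1_q(B_2^+,d\mu_1)}$ with $1/r=1/q-1/(d+\alpha_-)$. A standard parabolic interpolation between this elliptic estimate and an energy-type bound $\|v\|_{L_\infty((-4,0);L_2(B_2^+,d\mu_1))}\le N\|v\|_{\cH^1_q}$ (derived from $v_t\in\bH^{-1}_q$) then yields $1/q^*\le 1/q-1/(d+2+\alpha_-)$. For $d=1$ with $\alpha\ne 0$, the argument lacks a tangential direction, so I would tensor with a dummy variable by setting $V(x_1,x_d,t):=\eta(x_1)v(x_d,t)$ for a smooth cutoff $\eta$ and applying the $d=2$ version of the embedding, which produces the announced (non-optimal) exponent $1/q-1/(4+\alpha_-)$. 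The endpoint $q^*=\infty$ under strict inequality is the corresponding parabolic Morrey embedding in the lifted dimension.

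The main technical difficulty is the dimension-lift for non-integer $\alpha_-$: the radial extension is clean only for integer values, and in general one needs either a Stein--Weiss/Sawyer--Wheeden-type weighted Sobolev inequality or interpolation between integer cases. Throughout, care must be taken to preserve the parabolic structure and the $F_0/x_d$ component in the representation of $v_t$. The non-optimality in the $d=1$ case arises precisely because the direct 1D argument cannot cleanly reconcile these ingredients, forcing the detour through $d=2$.
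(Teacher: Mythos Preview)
Your reduction step---the substitution $v=x_d^\alpha u$, the Hardy bound for $D_dv$ via Lemma~\ref{lem3.1} with exponent $\alpha(q-1)$, and the rewriting of $v_t$ into a form lying in $\bH^{-1}_q(Q_2^+,d\mu_1)$---is exactly what the paper does. At that point the paper simply invokes the weighted parabolic embedding \cite[Lemma~3.1]{DP20} for $v$ with respect to $\mu_1$ and is done; your computation of $v_t$ is in fact more explicit than the paper's, which only writes $\|w_t\|_{\bH^{-1}_q(d\mu_1)}=\|x_d^\alpha u_t\|_{\bH^{-1}_q(d\mu_1)}$ without justification.

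Where you diverge is in trying to reprove \cite[Lemma~3.1]{DP20} from scratch. The outline (spatial weighted Sobolev with effective dimension $d+\alpha_-$, then parabolic interpolation, then the tensor trick for $d=1$) is the right shape, but one step is not correct as written: the claimed energy bound $\|v\|_{L_\infty((-4,0);L_2(B_2^+,d\mu_1))}\le N\|v\|_{\cH^1_q}$ does not follow from $v_t\in\bH^{-1}_q$ when $q\neq 2$. The pairing that makes the Lions--Magenes argument go through produces continuity into an interpolation space between $W^1_q$ and its dual, not $L_2$; in particular there is no reason the $L_2$ norm should even be finite for general $v\in\cH^1_q(d\mu_1)$. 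The parabolic gain of two in the exponent comes instead from either a Gagliardo--Nirenberg interpolation against $L_\infty L_q$ (not $L_2$), or from a direct space-time Sobolev argument treating $v_t$ as a divergence-form right-hand side. You also correctly flag the non-integer $\alpha_-$ issue for the dimension lift; the paper avoids both of these by citing the companion result.
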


\begin{proof}
Let $w = x_d^\alpha u$, so that
\[
D_i w = x_d^\alpha D_i u + \delta_{id} \alpha x_d^{\alpha -1} u,
\]
where $\delta_{id} =1$ when $i =d$ and $\delta_{id} =0$ otherwise. By Lemma \ref{lem3.1} with $\alpha(q-1)$ in place of $\alpha$,  we have
\begin{equation*}
\|x_d^{\alpha-1} u\|_{L_q(B_2^+, d\mu_1)} \leq N \|x_d^\alpha D_d u\|_{L_q(B_2^+, d\mu_1)},
\end{equation*}
where $N = N(q, \alpha, d)$. Therefore
\begin{equation}
                                    \label{eq6.42}
\|Dw\|_{L_q(Q_2^+, d\mu_1)} \leq N \|x_d^\alpha D u\|_{L_q(Q_2^+, d\mu_1)}.
\end{equation}
Then, by applying the weighted Sobolev embedding \cite[Lemma 3.1]{DP20} to $w$ and using \eqref{eq6.42}, we obtain
\[
\begin{split}
&\|w\|_{L_{q^*} (Q_2^+, d\mu_1)}\\
& \leq N \Big[ \|w\|_{L_q(Q_2^+, d\mu_1)} + \|D w\|_{L_q(Q_2^+, d\mu_1)} + \|w_t\|_{\bH_q^{-1}(Q_1^+, d\mu_1)} \Big] \\
& \leq N \Big[ \|x_d^\alpha u\|_{L_q(Q_2^+, d\mu_1)} + \|x_d^\alpha D u\|_{L_q(Q_2^+, d\mu_1)} + \|x_d^\alpha u_t\|_{\bH_q^{-1}(Q_2^+, d\mu_1)} \Big]\\
& = N \|u\|_{\sH^1_q(Q_2^+, x_d^{\alpha q} d\mu_1)}.
\end{split}
\]
This implies \eqref{eq4.15bb} as desired.
\end{proof}

In the time-independent case, we also have the following embedding result in which the condition of $q$ and $q^*$ is optimal. 

\begin{remark}
        \label{imbd-remark}
Let $\alpha\in (-\infty, 1)$ and $q, q^*\in (1,\infty)$ satisfy
\begin{equation} \label{0811.eq}
\frac{1}{q} \le  \frac{1}{d+ \alpha_-}+ \frac{1}{q^*}.
\end{equation}
Then for any $u\in \sW^1_q(B_2^+, x_{d}^{\alpha q}d\mu_1)$, we have
\begin{equation*}
                   \|x_{d}^{\alpha} u\|_{L_{q^*}(B_2^+,d\mu_1)}\le N\|u\|_{\sW^1_{q}(B_2^+, x_{d}^{\alpha q}d\mu_1)},
\end{equation*}
where $N=N(d,\alpha, q, q^*)>0$. The result still holds when $q^*=\infty$ and the inequality in \eqref{0811.eq} is strict.
\end{remark}
\noindent 
The proof of this result is similar to that of Lemma \ref{lem2.2}. However, instead of applying \cite[Lemma 3.1]{DP20} as in the proof of  Lemma \ref{lem2.2}, we apply \cite[Remark 3.2 (ii)]{DP20}. 

Consider the parabolic equation
\begin{equation} \label{eq.23}
 x_d^\alpha \big(a_0(t,x) u_t + \lambda c_0(t,x) u \big) - D_i\big(x_d^\alpha (a_{ij}(t,x) D_j u - F_i)\big)   = \sqrt{\lambda} x_d^\alpha f
\end{equation}
in  $\Omega_T$ with the boundary condition
\begin{equation} \label{eq.23-bdr}
u(t,x', 0)= 0 \quad \text{for} \ (t, x') \in (-\infty, T) \times \bR^{d-1},
\end{equation}
where  $a_0, c_0: \Omega_T \rightarrow \mathbb{R}$ are given measurable functions satisfying
\begin{equation} \label{a-c-eq}
\kappa \leq a_0(t,x), \ c_0(t,x) \leq \kappa^{-1}, \quad (t,x) \in \Omega_T.
\end{equation}
Observe that \eqref{eq.23} is slightly different from \eqref{eq3.23} as there are non-constant coefficients ${a}_0$ and ${c}_0$. We introduce such coefficients because they will be useful to our future project on equations in non-divergence form.

\begin{lemma} \label{L-2-lemma} Let $\alpha \in (-\infty, 1)$,  $\lambda > 0$, and let $(a_{ij})$, $a_0$, and $c_0$ be measurable functions defined on $\Omega_T$ such that \eqref{ellipticity} and \eqref{a-c-eq} are satisfied. Then for each $F \in L_2(\Omega_T,d\mu)^d$ and $f \in  L_2(\Omega_T,d\mu)$, there exists a unique weak solution $u\in \sH^1_2(\Omega_T,d\mu)$ to \eqref{eq.23}-\eqref{eq.23-bdr}. Moreover,
\begin{equation} \label{L2-lemma-est}
\|D u\|_{L_2(\Omega_T,d\mu)}+\sqrt{\lambda} \|u\|_{L_2(\Omega_T,d\mu)}
\leq N \|F\|_{L_2(\Omega_T, d\mu)} +N\|f\|_{L_2(\Omega_T,d\mu)},
\end{equation}
where $N = N(\kappa)$.
\end{lemma}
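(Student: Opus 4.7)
The proof would follow the standard Hilbert-space $L_2$-theory for weak parabolic equations, adapted to the weighted setting $\mu(dz)=x_d^\alpha\,dx\,dt$. I would work in the Gelfand triple $V=\sW^1_2(\bR^d_+,d\mu)\hookrightarrow H=L_2(\bR^d_+,d\mu)\hookrightarrow V^\ast$ and recast \eqref{eq.23}--\eqref{eq.23-bdr} as the abstract Cauchy problem $\ip{a_0u_t,\varphi}_{V^\ast,V}+\cB_t(u,\varphi)=\ip{G_\lambda,\varphi}$ for all $\varphi\in V$, with bilinear form
\[
\cB_t(u,v)=\int_{\bR^d_+}x_d^\alpha\bigl[a_{ij}(t,\cdot)D_juD_iv+\lambda c_0(t,\cdot)uv\bigr]\,dx,
\]
which, thanks to \eqref{ellipticity} and \eqref{a-c-eq}, is bounded on $V$ and coercive with constant $\kappa\min(1,\lambda)$.

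\textbf{A priori estimate.} I would first establish \eqref{L2-lemma-est} assuming a solution exists. Testing the weak formulation against $u$ itself produces $\cB_t(u,u)$ on the left-hand side, bounded below by $\kappa\|Du\|_{L_2(\Omega_T,d\mu)}^2+\lambda\kappa\|u\|_{L_2(\Omega_T,d\mu)}^2$, while the right-hand side $\int F_iD_iu\,d\mu+\sqrt\lambda\int fu\,d\mu$ is absorbed by Cauchy--Schwarz and Young's inequality into $N(\kappa)\bigl(\|F\|_{L_2(\Omega_T,d\mu)}^2+\|f\|_{L_2(\Omega_T,d\mu)}^2\bigr)$; crucially, the scaling $\sqrt\lambda$ in front of $f$ is precisely what allows the $fu$ coupling to be absorbed into the $\lambda\|u\|^2$ term. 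The evolution term $\int a_0u_tu\,d\mu\,dt$ contributes only the non-negative boundary expression $\tfrac12\int a_0(T,\cdot)u(T,\cdot)^2\,d\mu$, with no contribution at $t=-\infty$ since $u\in L_2(\Omega_T,d\mu)$ forces $u\to 0$ in $H$ as $t\to-\infty$ along a subsequence.

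\textbf{Existence and uniqueness.} Existence would follow either from an abstract parabolic Lions theorem (after replacing the inner product on $H$ by the equivalent $(u,v)_{a_0}:=\int a_0uv\,d\mu$ to absorb the factor $a_0$ multiplying $u_t$), or by a concrete Galerkin scheme on truncated cylinders $(-R,T)\times\{\tfrac1R<x_d<R,\,|x'|<R\}$ with zero initial and lateral data, where the weight $x_d^\alpha$ is uniformly non-degenerate so the classical unweighted $L_2$-theory applies; the uniform-in-$R$ a priori bound then yields a weakly convergent subsequence in $\sH^1_2(\Omega_T,d\mu)$ whose limit is a weak solution on $\Omega_T$. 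Uniqueness is immediate from linearity and the a priori estimate applied to the difference of two solutions.

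\textbf{Main obstacle.} The principal technical subtlety is justifying the energy identity for the $a_0u_tu$ term when $a_0$ depends only measurably on $t$, so that no derivatives of $a_0$ appear in the final bound. I would handle this either via a time-mollification $a_0^\varepsilon$ followed by limit passage, or by using a backward Steklov average of $u$ as test function in the weak formulation; in both approaches the estimate depends only on $\kappa$ because the formally ``bad'' $(a_0)_t$ term arises precisely from expanding $\partial_t(a_0u^2)$ and collapses back into the non-negative terminal boundary contribution upon integration in $t$, leaving no trace in the final inequality.
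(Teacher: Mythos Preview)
Your overall strategy coincides with the paper's: derive the a priori estimate by testing the equation against $u$ (justified via Steklov averages), obtain existence by Galerkin approximation on truncated space--time cylinders followed by a uniform energy bound and weak compactness, and read off uniqueness from the a priori bound applied to the difference of two solutions. The paper uses the half-balls $\widehat Q_k=(-k^2,\min\{k^2,T\})\times B_k^+$ rather than your truncation $\{1/R<x_d<R\}$, but this is cosmetic.

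One point deserves correction. Your treatment of the $a_0u_tu$ term is not right: you claim that the formal $(a_0)_t u^2$ contribution ``collapses back into the non-negative terminal boundary contribution,'' but in the identity
\[
a_0 u_t u=\tfrac12\partial_t(a_0u^2)-\tfrac12(a_0)_tu^2
\]
the last term is a genuine bulk integral with no sign, and neither time-mollification of $a_0$ nor the Steklov average of $u$ makes it vanish. What actually saves the argument is that in every place the paper invokes this lemma one has $a_0=\overline a_0(x_d)$ (see Section~\ref{sec4}), so $(a_0)_t\equiv 0$ and $a_0u_tu=\tfrac12\partial_t(a_0u^2)$ exactly; this is also precisely what your modified-inner-product suggestion $(u,v)_{a_0}=\int a_0uv\,d\mu$ accomplishes, and it is the correct mechanism. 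For a genuinely $t$-dependent $a_0$ the energy identity as you describe it does not close, and the paper's one-line appeal to Steklov averages glosses over the same point.
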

\begin{proof}
The proof is standard and we give it here for completeness. We first prove the a priori estimate \eqref{L2-lemma-est}. Let $u\in \sH^1_2(\Omega_T,d\mu)$ be a weak solution of \eqref{eq.23}. By multiplying \eqref{eq.23} with $u$ (here as usual we need to apply the Steklov average) and using integration by parts and \eqref{ellipticity}, we obtain
\[
\begin{split}
& \sup_{t\in (-\infty, T)}\int_{\bR^d_+} |u(t,x)|^2 \,\mu(dx) + \int_{\Omega_T} |Du|^2 \,\mu(dz) + \lambda \int_{\Omega_T} |u(z)|^2 \,\mu(dz) \\
& \leq N\int_{\Omega_T} |F(z)| |Du(z)| \,\mu(dz) + N\lambda^{1/2} \int_{\Omega_T} |f(z)| |u(z)| \,\mu(dz).
\end{split}
\]
Then by Young's inequality, we obtain \eqref{L2-lemma-est}.

From \eqref{L2-lemma-est}, we see that the uniqueness follows. Now, to prove the existence of solution,  for each $k\in \mathbb{N}$, let
\begin{equation}
                \label{eq3.39}
\widehat{Q}_k = (-k^2 , \min\{k^2, T\}) \times B_k^+.
\end{equation}
We consider the equation
\begin{equation} \label{Oge-k.eqn}
x_d^\alpha (a_0 u_t+\lambda c_0 u) - D_i\big(x_d^\alpha (a_{ij} D_j u - F_i)\big)   =  \lambda^{1/2} x_d^\alpha f \quad \text{in} \  \widehat{Q}_k
\end{equation}
with the boundary conditions
\begin{equation} \label{bdr-Oge-k}
u = 0 \quad \text{on} \ \partial_p  \widehat{Q}_k,
\end{equation}
where $\partial_p  \widehat{Q}_k $ is the parabolic boundary of $\widehat{Q}_k$. By Galerkin's method, for each $k$, there exists a unique weak solution $u_k \in \sH_2^1(\widehat{Q}_k, d\mu)$ to  \eqref{Oge-k.eqn}-\eqref{bdr-Oge-k}. By taking $u_k =0$ on $\Omega_T \setminus  \widehat{Q}_k$, we also have
\[
\begin{split}
 &\sup_{t\in ((-\infty,T)} \|u_{k}(t,\cdot)\|_{L_2(\bR^{d}_+, d\mu)} + \|D u_k\|_{L_2(\Omega_T,d\mu)}
 +\lambda^{1/2}\|u_k\|_{L_2(\Omega_T,d\mu)}\\
& \leq N \|F\|_{L_2(\Omega_T, d\mu)} + N\|f\|_{L_2(\Omega_T,d\mu)}.
\end{split}
\]
By the weak compactness, there is a subsequence which is still denoted by $\{u_k\}$ and $u\in \sH^1_2(\Omega_T,d\mu)$ such that
$$
u_k\rightharpoonup u,\quad Du_k\rightharpoonup Du
$$
weakly in $L_2(\Omega_T,d\mu)$ as $k\rightarrow \infty$. By taking the limit in the weak formulation of solutions, it is easily seen that $u$ is a weak solution of  \eqref{eq3.23}. The lemma is proved.
\end{proof}

\section{Equations with simple coefficients}
                        \label{sec4}
In this section, we study the boundary value problem \eqref{eq3.23}-\eqref{m-bdr-cond} in which the coefficients only depend on the $x_d$-variable. We prove local pointwise estimates for gradients of solutions to homogeneous equations and the unique solvability of inhomogeneous equations in $\sH_p^1(\Omega_T, x_d^{\alpha p}d\mu_1)$.

Consider the parabolic equation
\begin{equation}
                    \label{eq11.52a}
x_d^\alpha (\overline{a}_0(x_d)u_t+\lambda \overline{c}_0(x_d) u) - D_i(x_d^\alpha (\overline{a}_{ij}(x_d)D_j u - F_i))=\sqrt{\lambda} x_d^\alpha f\quad \text{in}\ \Omega_T
\end{equation}
with the homogeneous Dirichlet boundary condition
\begin{equation}
                    \label{eq12.01a}
u=0 \quad \text{in}\ \partial\bR^{d+1}_+,
\end{equation}
where $\lambda\ge 0$, $\alpha\in (-\infty,1)$ are constants, and $\overline{a}_{ij}: \bR_+ \rightarrow \bR$ are measurable functions and satisfy the ellipticity condition: for some $\kappa\in (0,1)$,
\begin{equation} \label{cs-ellip}
\kappa |\xi|^2\le \overline{a}_{ij}(x_d)\xi_i\xi_j,\quad   \quad |\bar a_{ij}(x_d)|\le \kappa^{-1}
\end{equation}
for all $\xi = (\xi_1, \xi_2,\ldots, \xi_d) \in \bR^d, x_d \in \bR_+$, and $\overline{a}_0, \overline{c}_0 : \bR_+ \rightarrow \bR$ satisfy
\begin{equation} \label{a-b.zero}
\kappa \leq \overline{a}_0(x_d), \  \overline{c}_0(x_d) \leq \kappa^{-1}, \quad \forall \ x_d \in \bR_+.
\end{equation}
Here we introduce $\overline{a}_0$ and $\overline{c}_0$ again bearing in the mind the applications to future work about non-divergence form equations.

\subsection{Pointwise gradient estimates for homogeneous equations}
Let $\lambda \geq 0$, $r>0$, and $z_0 =(t_0, x_0) \in \overline{\bR^{d+1}_+}$.  In this subsection, we study \eqref{eq11.52a} in $Q_r^+(z_0)$ when $F= 0$, $f =0$, i.e., the homogeneous parabolic equation
\begin{equation}
                    \label{eq11.52}
x_d^\alpha ( \overline{a}_0(x_d) u_t + \lambda \overline{c}_0(x_d) u)
-D_i(x_d^\alpha \overline{a}_{ij}(x_d)D_j u)=0
\end{equation}
in $Q_r^+(z_0)$ with the homogeneous Dirichlet boundary condition: if  $B_r(x_0) \cap \partial \bR^d_+ \not= \emptyset$
\begin{equation}  \label{eq12.01}
        u(t,x', 0)=0, \quad (t,x', 0) \in  Q_{r}(z_0) \cap  (\bR \times \partial \overline{\bR^{d}_+}).
\end{equation}

The main goal in this subsection is to derive pointwise gradient estimates for weak solutions of \eqref{eq11.52}-\eqref{eq12.01}. See Propositions \ref{lem1} and \ref{interior-Linf} below.  Recall that $u \in \sH^{1}_2(Q_r^+(z_0), d\mu)$ is a weak solution of \eqref{eq11.52}-\eqref{eq12.01} if
\[
\int_{Q_r^+(z_0)} (-\overline{a}_0 u \varphi_t + \lambda \overline{c}_0 u \varphi)\, \mu(dz) +\int_{Q_{r}^+(z_0)} \overline{a}_{ij}(x_d) D_j u  D_i \varphi\, \mu(dz) = 0
\]
for all $\varphi \in C_0^\infty(Q_r^+(z_0))$.  Our first result is about the interior estimates of solutions to \eqref{eq11.52}.

\begin{proposition} \label{interior-Linf} Let $z_0 = (t_0, x_0) \in \Omega_T$ and suppose that $B_r(x_0) \subset \bR^d_+$. If $u \in \cH^1_{2}(Q_r^+(z_0), d\mu)$ is a weak solution to \eqref{eq11.52}, then we have
\begin{equation}
                    \label{eq5.23i}
|x_d^{\alpha}u(t,x)| \le N\Big(\fint_{Q^+_{2r/3}(z_0)}|\tilde{x}_d^{\alpha}u(\tilde{z})|^2 \, d\mu_1(\tilde{z})\Big)^{1/2}
\end{equation}
and
\begin{equation}
                    \label{eq5.24i}
 |{x_d^\alpha} Du(t,x)| \le N\Big(\fint_{Q^+_{2r/3}(z_0)}\big(|\tilde{x}_d^\alpha Du(\tilde{z})|^2  + \lambda |\tilde{x}_d^\alpha u(\tilde{z})|^2 \big)\, d{\mu_1}(\tilde{z})\Big)^{1/2}
\end{equation}
for any $(t,x) \in Q_{r/2}^+(z_0)$.
\end{proposition}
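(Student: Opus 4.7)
The interior assumption $B_r(x_0)\subset \bR^d_+$ forces $x_{0d}>r$, so on $Q_r^+(z_0)$ we have $x_d\in (x_{0d}/3,5x_{0d}/3)$ and $x_d^\alpha$ is uniformly comparable to $x_{0d}^\alpha$ with constants depending only on $\alpha$. Under this observation, \eqref{eq11.52} is simply a uniformly parabolic divergence-form equation with bounded measurable coefficients, and the two claimed estimates are, up to harmless prefactors of $x_{0d}^\alpha$ and the comparability of $\mu_1$ with Lebesgue measure, the standard interior $L^\infty$ bounds on $u$ and $Du$. I would nonetheless prove them intrinsically in the weighted form, via a Moser iteration based on weighted Caccioppoli inequalities and the weighted parabolic Sobolev embedding of Lemma \ref{lem2.2}, so that the constants depend only on $(d,\alpha,\kappa)$ and so that the scheme carries over directly to the boundary case Proposition \ref{lem1}.

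Concretely, I would first establish a Caccioppoli estimate: for any weak solution $v$ of the homogeneous equation, testing against $v\eta^2$ with a cutoff $\eta$ supported in a sub-cylinder of $Q_r^+(z_0)$ and using \eqref{cs-ellip}, \eqref{a-b.zero}, and Young's inequality yields
\[
\int x_d^\alpha\bigl(|Dv|^2+\lambda v^2\bigr)\eta^2\,dz \le N\int x_d^\alpha v^2\bigl(|D\eta|^2+|\eta\eta_t|\bigr)\,dz.
\]
Rewriting this in terms of $w=x_d^\alpha v$ and $d\mu_1=x_d^{-\alpha}\,dz$, and applying Lemma \ref{lem2.2} to $w\eta$, produces a reverse-H\"older gain from $L^2(d\mu_1)$ to $L^{q^*}(d\mu_1)$ for some $q^*>2$. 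A standard Moser iteration along a geometric sequence of cylinders shrinking from $Q_{2r/3}^+(z_0)$ to $Q_{r/2}^+(z_0)$ then yields
\[
\sup_{Q_{r/2}^+(z_0)}|x_d^\alpha v|\le N\Bigl(\fint_{Q_{2r/3}^+(z_0)}|x_d^\alpha v|^2\,d\mu_1\Bigr)^{1/2}.
\]
Applied with $v=u$ this is \eqref{eq5.23i}. Because $\overline{a}_0,\overline{c}_0,\overline{a}_{ij}$ depend only on $x_d$, the tangential derivatives $D_{x_k}u$ for $k<d$ and $u_t$ also solve the same homogeneous equation on slightly smaller cylinders, so the same bound applies to each of them and controls every component of $Du$ except the normal one $D_du$.

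For $D_du$ I would use the equation itself. Since $\overline{a}_{dj}$ is only measurable in $x_d$ we cannot differentiate it directly; but the divergence structure together with $(x_d^\alpha \overline{a}_{ij})_{x_i}=0$ for $i<d$ gives the identity
\[
\partial_{x_d}\bigl(x_d^\alpha \overline{a}_{dj}(x_d) D_j u\bigr) = x_d^\alpha\bigl(\overline{a}_0 u_t+\lambda \overline{c}_0 u\bigr)-\sum_{i<d}x_d^\alpha \overline{a}_{ij}(x_d)D_i D_j u,
\]
whose right-hand side is pointwise controlled by the $L^\infty$ bounds already obtained for $u$, $u_t$, and the tangential derivatives (applied one more time to $D_{x_k}u$ in order to cover the mixed term $D_iD_ju$ with $i<d$, $j=d$). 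Integrating in $x_d$ from an interior reference level at which $x_d^\alpha \overline{a}_{dj}D_j u$ is pinned down by the $L^2$ Caccioppoli bound, and solving for $D_du$ using $\overline{a}_{dd}\ge\kappa$, then produces the required pointwise bound on $|x_d^\alpha D_du|$.

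The hardest part of the plan will be precisely this last step. The mixed second derivatives $D_iD_ju$ with $i<d$ and $j=d$ are not pointwise bounded by a single Moser iteration applied to $u$, so recovering $D_du$ requires a second layer of iteration applied to $D_{x_k}u$ together with careful accounting to ensure that the only $\lambda$-dependent term surviving on the right-hand side of \eqref{eq5.24i} is $\lambda|x_d^\alpha u|^2$. The Caccioppoli--embedding--Moser scheme for $u$ and tangential/time derivatives (which gives \eqref{eq5.23i} and the tangential components of \eqref{eq5.24i}) is by contrast quite routine once Lemma \ref{lem2.2} is available.
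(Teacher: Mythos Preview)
Your first paragraph is exactly the paper's proof: after scaling to $r=1$ one has $x_{0d}\ge 1$, so on $Q_{2/3}(z_0)$ the weight $x_d^\alpha$ is comparable to a constant and the equation is uniformly parabolic; the paper then simply cites the standard interior $L^\infty$ bounds on $u$ and $Du$ for such equations (\cite[Lemma 3.5]{DK11}) and converts to weighted form using $x_d\sim x_{0d}$. That is the entire argument; it occupies one paragraph.

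The intrinsic weighted Moser scheme you develop afterwards is unnecessary for this interior proposition but is precisely the machinery the paper deploys for the boundary case, Proposition~\ref{lem1}. Two remarks on your description of that scheme. First, the claim that the right-hand side of your flux identity is \emph{pointwise} controlled is not quite how it works: the term $x_d^\alpha\,\overline a_{ij}D_iD_ju$ with $i<d$, $j=d$ is $D_d(D_{x_k}u)$, and bounding it pointwise would require the very normal-derivative estimate you are proving, which is circular. The paper avoids this by taking Sobolev embedding only in the tangential variables $z'=(t,x')$ to obtain $L^2$-in-$x_d$ control on $D_du(z',\cdot)$ uniformly in $z'$, then using H\"older in the $x_d$-integration (see \eqref{eq4.16bb}--\eqref{eq11.01}), followed by an iteration that successively improves the power of $x_d$. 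Second, ``pinning down $x_d^\alpha\cU$ at a reference level by the $L^2$ Caccioppoli bound'' is likewise not pointwise information; what one actually uses is the interior gradient estimate \eqref{eq5.24i} at a fixed positive height (exactly the role of $2^{-\alpha}\cU(z',1/2)$ in \eqref{eq11.01}). In the interior these refinements are redundant because the equation is uniformly parabolic, but they are the heart of the boundary argument.
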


\begin{proof}
We write $x_0 = (x_0', x_{0d}) \in \bR^{d-1} \times \bR_+$ and by scaling, without loss of generality we may assume $r =1$. As $x_{0d} \geq 1$, the coefficients $x_d^\alpha \overline{a}_{ij}(x_d)$ is uniformly elliptic in $Q_{2/3}(z_0)$. Then, from the standard energy estimates and the Sobolev embedding (see, for instance, \cite[Lemma 3.5]{DK11}), we obtain
\[
\|u\|_{L_\infty(Q_{1/2}(z_0))}\le N\Big(\fint_{Q_{2/3}(z_0)}|u|^2 \, dx \Big)^{1/2}
\]
and
\[
\| Du\|_{L_\infty(Q_{1/2}(z_0))}\le N\Big(\fint_{Q_{2/3}(z_0)}\big(|Du|^2  + \lambda | u|^2 \big)\, d x \Big)^{1/2}.
\]
From this and $x_d \sim x_{d0}$  in $Q_{2/3}(z_0)$, we obtain \eqref{eq5.23i} and \eqref{eq5.24i}.
\end{proof}
The next result is about boundary pointwise gradient estimates of solutions.

\begin{proposition}
        \label{lem1}
         Let  $u\in \sH^1_{2}(Q_1^+, d\mu)$ be a weak solution to \eqref{eq11.52}-\eqref{eq12.01} in $Q_1^+$. Then we have
\begin{equation}
                    \label{eq5.23}
x_d^{\alpha}|u(t,x)|\le N  x_d \Big(\fint_{Q^+_{1}}|\tilde{x}_d^{\alpha} u (\tilde{z})|^2 \, \mu_1(d\tilde{z})\Big)^{1/2}
\end{equation}
and
\begin{equation}
                    \label{eq5.24}
x_d^\alpha |Du(t,x)|\le N \Big(\fint_{Q^+_{1}}\big(|\tilde{x}_d^\alpha Du(\tilde{z})|^2  + \lambda |\tilde{x}_d^\alpha u(\tilde{z})|^2 \big)\, \mu_1(d\tilde{z})\Big)^{1/2}
\end{equation}
for any $(t,x) \in Q_{1/2}^+$.
\end{proposition}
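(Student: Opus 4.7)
The plan is to combine tangential Caccioppoli estimates with the weighted parabolic Sobolev embedding from Lemma \ref{lem2.2} in a bootstrap argument, in the spirit of \cite{DP19}. The key structural observation is that because $\bar a_{ij}$, $\bar a_0$, and $\bar c_0$ depend only on $x_d$, any tangential-time derivative $v=D_{x'}^\beta\partial_t^l u$ is again a weak solution of \eqref{eq11.52}-\eqref{eq12.01} on slightly smaller cylinders, and in particular still vanishes on $\{x_d=0\}$. First I would test the equation for $v$ against $\eta^2 v$ with a suitable cut-off $\eta$ to derive the standard Caccioppoli inequality
\[
\int_{Q_{r'}^+}(|Dv|^2+\lambda|v|^2)\,d\mu\le \frac{N}{(r-r')^2}\int_{Q_r^+}|v|^2\,d\mu,\quad 0<r'<r,
\]
and iterate it on a nested chain of cylinders between $Q_1^+$ and $Q_{1/2}^+$, producing $L_2(d\mu)$-bounds on $D(D_{x'}^\beta\partial_t^l u)$ and $\sqrt\lambda\,D_{x'}^\beta\partial_t^l u$ for all $|\beta|+2l\le m$ in terms of $\|u\|_{L_2(Q_1^+,d\mu)}$.

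Next I would bring in the normal derivative $D_du$ directly from the equation: rearranging \eqref{eq11.52} gives
\[
D_d\bigl(x_d^\alpha\bar a_{dd}\,D_du\bigr)=x_d^\alpha(\bar a_0 u_t+\lambda\bar c_0 u)-\sum_{(i,j)\ne(d,d)}D_i\bigl(x_d^\alpha\bar a_{ij}D_ju\bigr),
\]
whose right-hand side is already controlled by the tangential quantities from the first step. Using $\bar a_{dd}\ge\kappa$ and the measure conversion $|Du|^2\,d\mu=|x_d^\alpha Du|^2\,d\mu_1$, this furnishes an $L_2(d\mu_1)$-bound on $x_d^\alpha D_du$, and an induction on the order of $D_d$-derivatives extends this to arbitrary derivatives of $w:=x_d^\alpha u$, with Hardy's inequality (Lemma \ref{lem3.1}) reconciling $D(x_d^\alpha u)$ with $x_d^\alpha Du$. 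Iterating Lemma \ref{lem2.2}, each step lifting integrability by a fixed amount determined by $d$ and $\alpha_-$ and contracting the cylinder slightly, I reach the strict-inequality regime of Lemma \ref{lem2.2} and obtain
\[
\|x_d^\alpha Du\|_{L_\infty(Q_{1/2}^+)}+\sqrt\lambda\,\|x_d^\alpha u\|_{L_\infty(Q_{1/2}^+)}\le N\Bigl(\fint_{Q_1^+}\bigl(|\tilde x_d^\alpha Du|^2+\lambda|\tilde x_d^\alpha u|^2\bigr)\,\mu_1(d\tilde z)\Bigr)^{1/2},
\]
which is \eqref{eq5.24}.

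For the extra $x_d$-factor in \eqref{eq5.23} I would use the trace $u(t,x',0)=0$ (from Lemma \ref{trace-u} when $\alpha\le-1$, and directly when $\alpha\in(-1,1)$) and integrate $D_du$ from $0$:
\[
|u(t,x',x_d)|\le\int_0^{x_d} s^{-\alpha}\cdot s^\alpha|D_du(t,x',s)|\,ds\le\frac{x_d^{1-\alpha}}{1-\alpha}\,\|x_d^\alpha D_du\|_{L_\infty(Q_{1/2}^+)},
\]
where $\alpha<1$ is essential. Multiplying by $x_d^\alpha$ gives $x_d^\alpha|u(t,x)|\le N x_d\|x_d^\alpha Du\|_{L_\infty(Q_{1/2}^+)}$, and running the previous argument on $Q_{3/4}^+$ in place of $Q_{1/2}^+$ and then applying one further Caccioppoli step to absorb $|x_d^\alpha Du|^2+\lambda|x_d^\alpha u|^2$ into $|x_d^\alpha u|^2$ on $Q_1^+$ yields \eqref{eq5.23}. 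The main technical obstacle is the Sobolev bootstrap in the second paragraph: each application of Lemma \ref{lem2.2} lifts integrability only by a fixed increment, so many iterations on progressively smaller cylinders are required, and the normal-derivative information extracted from the equation must be carefully propagated through every iteration.
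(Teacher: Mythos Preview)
There is a genuine gap in the second paragraph. You claim that ``an induction on the order of $D_d$-derivatives extends this to arbitrary derivatives of $w:=x_d^\alpha u$,'' but the coefficients $\bar a_{ij},\bar a_0,\bar c_0$ are only \emph{measurable} in $x_d$; taking even one extra $D_d$ derivative of the equation would require differentiating them, which is not permitted. Consequently you cannot control $D_d^k w$ in $L_2(d\mu_1)$ for $k\ge 2$, and the bootstrap via Lemma~\ref{lem2.2} does not close: each application of that lemma needs $x_d^\alpha Du$ (in particular $x_d^\alpha D_d u$) in $L_q(d\mu_1)$, whereas the lemma only outputs $x_d^\alpha u\in L_{q^*}(d\mu_1)$. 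A related but smaller issue is your isolation of $\bar a_{dd}D_d u$ rather than the full normal flux $\cU=\bar a_{dj}D_j u$: with your choice the right-hand side still contains $D_d\bigl(x_d^\alpha\bar a_{dj}(x_d)D_{x_j'}u\bigr)$ for $j<d$, which again involves a derivative of the measurable $\bar a_{dj}$.

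The paper's proof avoids both difficulties by using the (unweighted) Sobolev embedding \emph{only in the tangential variables} $z'=(t,x')$, where arbitrarily many derivatives are available via difference quotients. This yields, for each fixed $x_d$, pointwise-in-$z'$ control on $D_d u(\cdot,x_d)$ and $\cU(\cdot,x_d)$ in terms of weighted $L_2$ norms. The $x_d$-direction is handled separately by integrating the relation $|D_d(x_d^\alpha\cU)|\le Nx_d^\alpha(|u_t|+\lambda|u|+|DD_{x'}u|)$ from $x_d$ to $1/2$, using the interior estimate of Proposition~\ref{interior-Linf} to fix $\cU(z',1/2)$. The first pass gives $|D_d u(z)|\le N(x_d^{-\alpha}+x_d^{(1-\alpha)/2-\varepsilon})(\cdots)$, and because $u_t$ and $D_{x'}u$ satisfy the same equation one feeds this back into the integral; each pass raises the second exponent by $1$, so after finitely many iterations only $x_d^{-\alpha}$ survives. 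Thus the iteration is on the \emph{power of $x_d$} in pointwise estimates, not on integrability via Lemma~\ref{lem2.2}. Your fundamental-theorem-of-calculus argument for the extra factor of $x_d$ in \eqref{eq5.23} is correct and is indeed how the paper obtains it.
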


\begin{proof}
We  adapt the approach in \cite{DP19} which works also for parabolic systems. For $0 < r <R \leq 1$, testing \eqref{eq11.52} by $u\varphi^2$, where $\varphi \in C_0^\infty(Q_R)$ satisfying $\varphi \equiv 1$ in $Q_r$, we obtain the following Caccioppoli inequality
\begin{equation} \label{eq11.58}
\int_{Q_r^+} \Big( |Du|^2  + \lambda |u|^2 \Big)\,{\mu}(dz)  \leq N (d, \kappa,r , R) \int_{Q_R^+} |u|^2 \,{\mu}(dz).
\end{equation}
Similar to the proof of \cite[Lemma 4.2]{DP19}, by testing the equation with $u_t\varphi^2$, we obtain
$$
\int_{Q_r^+} |u_t|^2 \,{\mu}(dz)  \leq N (d, \kappa,r , R) \int_{Q_R^+}  \Big( |Du|^2  + \lambda |u|^2 \Big)\,{\mu}(dz).
$$
Moreover, by using the difference quotient method in the $t$ and $x'$ variables, we also have
\begin{align} \nonumber
& \int_{Q_r ^+}|\partial_t^{j+1}u|^2 \,{\mu}(dz) + \int_{Q_r ^+} |DD_{x'}^k \partial_t^ju|^2 \,{\mu}(dz) \\ \label{inter-eq3.50}
& \le N(d,  \kappa, k, j, r, R) \int_{Q_{R}^+} \Big( {|Du|^2 + \lambda |u|^2} \Big)\,{\mu}(dz)
\end{align}
for any $k, j  \in \mathbb{N} \cup \{0\}$. We now prove that
\begin{align} \label{0727.eqn}
&|u_t(z)|+ |D_{x'}u(z)|\notag\\
&\le N x_d^{(1-\alpha)/2} \Big[\|Du\|_{L_2(Q_1^+, d\mu)} + \sqrt{\lambda} \|u\|_{L_2(Q_1^+, d\mu)} \Big]
\end{align}
for any $z\in Q^+_{1/2}$.
By applying the Sobolev embedding theorem in $z'=(t,x')$, for any $z=(z',x_d)\in Q_{1/2}^+$,
we have
\begin{align*}
 |D_du(z',x_d)|\le  N(d) \|D_du(\cdot, x_d)\|_{W^{k/2,k}_2(Q_{1/2}')}
\end{align*}
with an even integer $k> (d+1)/2$. Then, using \eqref{eq11.58} and \eqref{inter-eq3.50},  we have
\begin{align}  \label{eq4.16bb}
\int_{0}^{1/2} x_d^{\alpha} |D_d u(z',x_d)|^2 \, dx_d
&\le N \int_{0}^{1/2} x_d^{\alpha}  \|D_d u(\cdot, x_d)\|^2_{W^{k/2,k}_2(Q_{1/2}')}\,dx_d\nonumber\\
&\le N \|u\|_{L_2(Q_1^+, d\mu)}^2, \quad z'\in Q_{1/2}'.
\end{align}
From this, and  by H\"older's inequality and \eqref{eq4.16bb}, we infer that
\begin{align*}
&\int_{0}^{x_d} |D_d u(z', \tilde x_d)|\, d\tilde x_d \\
&\le \left(\int_{0}^{1/2}x_d^{\alpha}|D_d u(z',\tilde x_d)|^2 \, d\tilde x_d\right)^{1/2} \left(
\int_{0}^{x_d} \tilde x_d^{-\alpha}\,d\tilde x_d \right)^{1/2}\\
&\le N x_d^{(1-\alpha)/2} \|u\|_{L_2(Q_1^+, d\mu)},
\end{align*}
where we also used $\alpha<1$ in the last inequality. By the fundamental theorem of calculus and the boundary condition $u(x', 0) =0$, we obtain
\begin{equation} \label{0727.eqn1}
|u(z', x_d)| \leq \int_0^{x_d} |D_du(z', s)|\, ds \leq N x_d^{(1-\alpha)/2}\|u\|_{L_2(Q_1^+, d\mu)}, \quad \forall\,  z \in Q_{1/2}^+.
\end{equation}
Now, applying the difference quotient method if needed, we see that $D_{x'}u$ and $u_t$ solve the same equation as $u$.  Then,  we apply \eqref{0727.eqn1} to $D_{x'}u$ and $u_t$ and then use \eqref{inter-eq3.50} to obtain \eqref{0727.eqn}.

Next, let
\begin{equation}
                \label{eq11.31}
\cU= \overline{a}_{dj}(x_d)D_j u.
\end{equation}
By using the Sobolev inequality in the $z'$-variable, we see that
\begin{align*}
& |\cU(z', x_d)| +  |u_t(z', x_d)|  + |DD_{x'}u(z', x_d)| \\
& \leq N\Big[\|\cU(\cdot, x_d)\|_{W^{k/2, k}_2(Q_{1/2}')} + \|u_t(\cdot, x_d)\|_{W^{k/2, k}_2(Q_{1/2}')}  \\
& \qquad \quad +  \|D D_{x'} u(\cdot, x_d)\|_{W^{k/2, k}_2(Q_{1/2}')} \Big]
\end{align*}
for even $k > (d+1)/2$ and $z' \in Q_{1/2}'$. Then,
\begin{align} \nonumber
&  \int_0^{1/2}x_d^\alpha \Big( |\cU(z', x_d)|^2 +  |u_t(z', x_d)|^2  + |DD_{x'}u(z', x_d)|^2\Big)\, dx_d\\ \nonumber
&\leq N\int_0^{1/2}x_d^\alpha \Big(\|\cU(\cdot, x_d)\|_{W^{k/2, k}_2(Q_{1/2}')}^2 + \|u_t(\cdot, x_d)\|_{W^{k/2, k}_2(Q_{1/2}')}^2 \\\ \nonumber
& \qquad \qquad  +  \|D D_{x'} u(\cdot, x_d)\|_{W^{k/2, k}_2(Q_{1/2}')}^2 \Big) \, dx_d \\ \label{0728.eq1}
& \leq N\big( \|Du\|^2_{L_2(Q_1^+, d\mu)} + \lambda \|u\|^2_{L_2(Q_1^+, d\mu)}\big), \quad  z' \in Q_{1/2}',
\end{align}
where we used \eqref{inter-eq3.50} and \eqref{eq11.58} in the last estimate. Note that from \eqref{eq11.52},
\begin{equation}
                    \label{eq12.46}
|D_d (x_d^\alpha \cU)| \leq  Nx_d^{\alpha}[|u_t| + \lambda |u| + |DD_{x'} u| ].
\end{equation}
Then, it follows from the last estimate, H\"older's inequality, \eqref{0728.eq1}, and \eqref{eq11.58} that
\begin{align} \notag
&|x_d^\alpha\cU(z',x_d)-2^{-\alpha}\cU(z',1/2)|  \label{eq11.01} \\
&\le N\int_{x_d}^{1/2} \tilde x_d^\alpha\big(|u_t(z',\tilde x_d)|
+\lambda |u(z',\tilde x_d)|+|DD_{x'}u(z',\tilde x_d)|\big)\,d\tilde x_d  \\
&\le N\big(1+x_d^{\frac{1+\alpha}2-\varepsilon}\big)\Big(\int_{x_d}^{1/2} \tilde x_d^\alpha\big(|u_t(z',\tilde x_d)|
+\lambda |u(z',\tilde x_d)|+|DD_{x'}u(z',\tilde x_d)|\big)^2\,d\tilde x_d\Big)^{1/2}  \notag  \\
&\le N\big(1+x_d^{\frac{1+\alpha}2-\varepsilon}\big)\big( \|Du\|_{L_2(Q_1^+, d\mu)} + \sqrt\lambda \|u\|_{L_2(Q_1^+, d\mu)}\big)  \notag
\end{align}
for any small $\varepsilon>0$, which is included in order to avoid the $\log$ correction when $\alpha =-1$.  This together with the interior gradient estimate \eqref{eq5.24i} of Proposition \ref{interior-Linf} gives
$$
|x_d^\alpha\cU(z',x_d)|
\le N\big(1+x_d^{\frac{1+\alpha}2-\varepsilon}\big)\big( \|Du\|_{L_2(Q_1^+, d\mu)} + \sqrt\lambda \|u\|_{L_2(Q_1^+, d\mu)}\big).
$$
Therefore, by using the ellipticity condition \eqref{cs-ellip}, the definition of $\cU$, and \eqref{0727.eqn}, we obtain
\begin{equation}
                        \label{eq10.56}
|D_d u(z',x_d)|\le N\big(x_d^{-\alpha}+x_d^{\frac{1-\alpha}2-\varepsilon}\big)\big( \|Du\|_{L_2(Q_1^+, d\mu)} + \sqrt\lambda \|u\|_{L_2(Q_1^+, d\mu)}\big).
\end{equation}
By using the zero boundary condition, we have
\begin{equation}
                                    \label{eq11.00}
|u(z',x_d)|\le N\big(x_d^{1-\alpha}+x_d^{1+\frac{1-\alpha}2-\varepsilon}\big)\big( \|Du\|_{L_2(Q_1^+, d\mu)} + \sqrt\lambda \|u\|_{L_2(Q_1^+, d\mu)}\big).
\end{equation}
Since $D_{x'}u$ and $u_t$ satisfy the same equation as $u$, by using the above estimate and \eqref{inter-eq3.50}, we get
\begin{align}
                        \label{eq11.02}
&|u_t(z',x_d)|+|D_{x'}u(z',x_d)|\notag\\
&\le N\big(x_d^{1-\alpha}+x_d^{1+\frac{1-\alpha}2-\varepsilon}\big)\big( \|Du\|_{L_2(Q_1^+, d\mu)} + \sqrt\lambda \|u\|_{L_2(Q_1^+, d\mu)}\big),
\end{align}
which together with \eqref{eq10.56} gives
\begin{equation}
                        \label{eq10.56b}
|D u(z',x_d)|\le N\big(x_d^{-\alpha}+x_d^{\frac{1-\alpha}2-\varepsilon}\big)\big( \|Du\|_{L_2(Q_1^+, d\mu)} + \sqrt\lambda \|u\|_{L_2(Q_1^+, d\mu)}\big).
\end{equation}
Again, because $D_{x'}u$ satisfies the same equation as $u$, from \eqref{eq10.56b}, \eqref{inter-eq3.50}, and \eqref{eq11.58}, we get
\begin{equation}
                        \label{eq10.56c}
|DD_{x'} u(z',x_d)|\le N\big(x_d^{-\alpha}+x_d^{\frac{1-\alpha}2-\varepsilon}\big)\big( \|Du\|_{L_2(Q_1^+, d\mu)} + \sqrt\lambda \|u\|_{L_2(Q_1^+, d\mu)}\big).
\end{equation}
Feeding \eqref{eq11.00}, \eqref{eq11.02}, and \eqref{eq10.56c} back to \eqref{eq11.01} yields
\begin{align*}
&|x_d^\alpha\cU(z',x_d)-2^{-\alpha}\cU(z',1/2)|\\
&\le N\big(1+x_d^{1+\frac{1+\alpha}2-\varepsilon}\big)\big( \|Du\|_{L_2(Q_1^+, d\mu)} + \sqrt\lambda \|u\|_{L_2(Q_1^+, d\mu)}\big).
\end{align*}
By using the interior gradient estimate \eqref{eq5.24i} and \eqref{eq11.02}, we get
\begin{equation*}
|D_d u(z',x_d)|\le N\big(x_d^{-\alpha}+x_d^{1+\frac{1-\alpha}2-\varepsilon}\big)\big( \|Du\|_{L_2(Q_1^+, d\mu)} + \sqrt\lambda \|u\|_{L_2(Q_1^+, d\mu)}\big),
\end{equation*}
which improves \eqref{eq10.56}. Similar to \eqref{eq11.00} and \eqref{eq11.02}, we also have
\begin{align*}
&|u(z',x_d)|+|u_t(z',x_d)|+|D_{x'}u(z',x_d)|\\
&\le N\big(x_d^{1-\alpha}+x_d^{2+\frac{1-\alpha}2-\varepsilon}\big)\big( \|Du\|_{L_2(Q_1^+, d\mu)} + \sqrt\lambda \|u\|_{L_2(Q_1^+, d\mu)}\big).
\end{align*}
By iteration, in finitely many steps we reach
$$
|D_d u(z',x_d)|\le Nx_d^{-\alpha}\big( \|Du\|_{L_2(Q_1^+, d\mu)} + \sqrt\lambda \|u\|_{L_2(Q_1^+, d\mu)}\big)
$$
and
\begin{align*}
&|u(z',x_d)|+|u_t(z',x_d)|+|D_{x'}u(z',x_d)|\\
&\le Nx_d^{1-\alpha}\big( \|Du\|_{L_2(Q_1^+, d\mu)} + \sqrt\lambda \|u\|_{L_2(Q_1^+, d\mu)}\big),
\end{align*}
which imply \eqref{eq5.23} and \eqref{eq5.24}. The proposition is proved.
\end{proof}

\subsection{Solvability of solutions}
In this subsection, we prove Theorem \ref{thm3} below about the existence and uniqueness of solutions to \eqref{eq11.52a}-\eqref{eq12.01a}. This theorem can be considered as a simplified version of Theorem \ref{thm2} and it will be used later in the proof of Theorem \ref{thm2}.

\begin{theorem}
            \label{thm3}
Let $\alpha \in (-\infty, 1)$ and  $\lambda >0$.  Suppose that \eqref{cs-ellip} and  \eqref{a-b.zero} are satisfied. Then the following assertions hold.
\\
\noindent\textup{\bf{(i)}} Suppose that $F : \Omega_T \rightarrow \bR^d$ and $f: \Omega_T \rightarrow \bR$ such that $|F|+ |f| \in L_p(\Omega_T, x_d^{\alpha p} d\mu_1)$ for $p\in (2, \infty)$. Then, for every weak solution $u \in \sH_{q}^1(\Omega_T,  x_d^{\alpha q}d\mu_1)$  of \eqref{eq11.52a}-\eqref{eq12.01a} for some $q\in [2,p]$, we have  $u\in \sH_{p}^1(\Omega_T,  x_d^{\alpha p}d\mu_1)$ and
\begin{equation} \label{thm3-est}
\begin{split}
& \|Du\|_{L_p(\Omega_T, x_d^{\alpha p} d\mu_1)}
+ \sqrt{\lambda}\|u\|_{L_p(\Omega_T, x_d^{\alpha p} d\mu_1)}\\
& \leq N \Big[ \|F\|_{L_p(\Omega_T, x_d^{\alpha p} d\mu_1)}
+ \|f\|_{L_p(\Omega_T, x_d^{\alpha p}d\mu_1)}\Big],
\end{split}
\end{equation}
where $N = N(d, \alpha, \kappa, p)$.\\
\noindent\textup{\bf{(ii)}} For each $F : \Omega_T \rightarrow \bR^d$ and $f: \Omega_T \rightarrow \bR$ such that $|F|+ |f| \in L_p(\Omega_T, x_d^{\alpha p} d\mu_1)$ with $p\in (1, \infty)$, there exists unique weak solution $u \in \sH_p^1(\Omega_T, x_d^{\alpha p} d\mu_1)$ of \eqref{eq11.52a}-\eqref{eq12.01a}. Moreover, \eqref{thm3-est} holds.
\end{theorem}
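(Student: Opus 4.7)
The plan is to combine the $L_2$-solvability of Lemma \ref{L-2-lemma} with the pointwise gradient estimates of Propositions \ref{interior-Linf} and \ref{lem1} through a mean-oscillation / sharp-function argument in which $\mu_1$ plays the role of the underlying measure. Since $\alpha<1$, the weight $x_d^{-\alpha}$ is locally integrable on $\overline{\bR^d_+}$ and lies in the Muckenhoupt $A_\infty$ class with respect to Lebesgue measure, so $\mu_1$ is doubling and the Hardy--Littlewood maximal and Fefferman--Stein sharp-function operators taken relative to $\mu_1$ enjoy the usual $L_p(d\mu_1)$ bounds. Because the norm in $\sH_p^1(\Omega_T,x_d^{\alpha p}d\mu_1)$ is precisely the $L_p(d\mu_1)$-norm of $x_d^\alpha u$ and $x_d^\alpha Du$ (together with a dual norm for $u_t$), it is natural to formulate all estimates in terms of these two quantities.

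I would first establish the a priori estimate \eqref{thm3-est} for $p>2$. Fix $z_0\in\Omega_T$, $r>0$, and a large $\gamma\ge 4$ to be chosen, and write $R=\gamma r$. Using Lemma \ref{L-2-lemma}, decompose $u=v+w$ on $\Omega_T$ by solving
$$
x_d^{\alpha}(\bar a_0 w_t+\lambda \bar c_0 w)-D_i\bigl(x_d^{\alpha}(\bar a_{ij}D_j w-F_i\chi_{Q_R^+(z_0)})\bigr) = \sqrt{\lambda}\,x_d^{\alpha}f\chi_{Q_R^+(z_0)}
$$
on $\Omega_T$ with zero Dirichlet data on $\{x_d=0\}$, so that $v=u-w$ is a weak solution of the homogeneous equation \eqref{eq11.52}--\eqref{eq12.01} on $Q_R^+(z_0)\cap\Omega_T$. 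The $L_2(d\mu)$-estimate \eqref{L2-lemma-est} for $w$ translates, after multiplication by $x_d^\alpha$, into an $L_2(d\mu_1)$-bound for $x_d^\alpha Dw$ and $\sqrt\lambda\,x_d^\alpha w$ controlled by the $L_2(Q_R^+(z_0),d\mu)$-norms of $F$ and $f$. For $v$, Proposition \ref{lem1} applied (after rescaling) to $v$ and to the tangential difference quotients $D_{x'}v$ and $v_t$, which satisfy the same homogeneous equation, produces $L_\infty$-bounds on $x_d^\alpha v$, $x_d^\alpha Dv$, $x_d^\alpha DD_{x'}v$, and $x_d^\alpha v_t$ on $Q_r^+(z_0)$ in terms of the corresponding $L_2(d\mu_1)$-averages on $Q_R^+(z_0)$, while $D_d$-control of the conormal quantity $x_d^\alpha\bar a_{dj}D_j v$ is recovered from the equation itself as in the derivation of \eqref{eq11.01}. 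Combining these yields the mean-oscillation bound
\begin{align*}
&\Bigl(\fint_{Q_r^+(z_0)}\bigl|x_d^\alpha Du-(x_d^\alpha Du)_{Q_r^+(z_0),\mu_1}\bigr|^2\,\mu_1(dz)\Bigr)^{1/2}\\
&\quad\le N\gamma^{-\delta}\Bigl(\fint_{Q_R^+(z_0)}\bigl(|x_d^\alpha Du|^2+\lambda|x_d^\alpha u|^2\bigr)\,\mu_1(dz)\Bigr)^{1/2}\\
&\quad+N(\gamma)\Bigl(\fint_{Q_R^+(z_0)}\bigl(|x_d^\alpha F|^2+|x_d^\alpha f|^2\bigr)\,\mu_1(dz)\Bigr)^{1/2}
\end{align*}
for some $\delta>0$, with a parallel estimate for $\sqrt\lambda\,x_d^\alpha u$; the interior case, when $Q_R^+(z_0)$ does not touch $\{x_d=0\}$, is handled analogously using Proposition \ref{interior-Linf}.

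Feeding this oscillation bound into the Fefferman--Stein sharp function theorem (or equivalently the Calder\'on level-set argument of \cite{Cal}) with respect to $\mu_1$ produces, for every $p\in(2,\infty)$,
\begin{align*}
&\|x_d^\alpha Du\|_{L_p(d\mu_1)}+\sqrt{\lambda}\,\|x_d^\alpha u\|_{L_p(d\mu_1)}\\
&\quad\le N\gamma^{-\delta}\bigl(\|x_d^\alpha Du\|_{L_p(d\mu_1)}+\sqrt{\lambda}\,\|x_d^\alpha u\|_{L_p(d\mu_1)}\bigr)+N(\gamma)\bigl(\|x_d^\alpha F\|_{L_p(d\mu_1)}+\|x_d^\alpha f\|_{L_p(d\mu_1)}\bigr).
\end{align*}
Taking $\gamma$ large enough to absorb the first term on the right gives \eqref{thm3-est}, proving part (i) for $p>2$; the case $p=2$ is Lemma \ref{L-2-lemma}, since $L_2(\Omega_T,x_d^{2\alpha}d\mu_1)=L_2(\Omega_T,d\mu)$.

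For part (ii) with $p\ge 2$, existence would follow by approximating $F,f\in L_p(\Omega_T,x_d^{\alpha p}d\mu_1)$ by elements of $C_c^\infty(\Omega_T\cap\{x_d>0\})$ (which are simultaneously dense in $L_2(\Omega_T,d\mu)$), solving the approximate problems in $\sH_2^1(\Omega_T,d\mu)$ by Lemma \ref{L-2-lemma}, promoting these solutions via part (i) with $q=2$ to uniform $\sH_p^1(\Omega_T,x_d^{\alpha p}d\mu_1)$-bounds, and passing to the limit. For $p\in(1,2)$ I would use duality: the formal adjoint of \eqref{eq11.52a}, obtained by reversing time and transposing $(\bar a_{ij})$, has the same singular/degenerate structure with the same $\alpha$, so its $\sH_{p'}^1(\Omega_T,x_d^{\alpha p'}d\mu_1)$-theory for $p'\in(2,\infty)$ is already handled; pairing with solutions of the adjoint problem yields a weak solution of \eqref{eq11.52a}--\eqref{eq12.01a} in $\sH_p^1$ together with the estimate \eqref{thm3-est}. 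Uniqueness in all cases is immediate from \eqref{thm3-est} applied to the difference of two solutions. The main obstacle throughout is the mean-oscillation step: obtaining quantitative decay of the oscillation of $x_d^\alpha Dv$ all the way down to $\{x_d=0\}$, which is precisely why Proposition \ref{lem1} is stated in terms of $x_d^\alpha Dv$ rather than $Dv$ itself and why pointwise control must be propagated to the tangential derivatives of $v$ as well.
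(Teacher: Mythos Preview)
Your strategy is sound, but it diverges from the paper's proof of this particular theorem in one notable way. For part (i) the paper does not establish a mean-oscillation estimate: it packages the $L_2$-solvability (Lemma \ref{L-2-lemma}) and the pointwise bounds (Propositions \ref{interior-Linf}--\ref{lem1}) into the decomposition of Proposition \ref{Simple-approx}, which gives only $\|W\|_{L_\infty(Q_r^+(z_0))}^2 \le N\fint_{Q_{10r}^+(z_0)}|U|^2+|G|^2\,d\mu_1$ together with $\fint_{Q_{2r}^+(z_0)}|V|^2\,d\mu_1 \le N\fint|G|^2\,d\mu_1$. This is exactly the input for the Caffarelli--Peral level-set argument, and the paper invokes it directly. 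Your route instead requires oscillation decay of $x_d^\alpha Dv$ (equivalently, $C^{1/2,1}$ control of $x_d^\alpha D_{x'}v$ and of the conormal quantity $x_d^\alpha\bar a_{dj}D_jv$), which is the content of Corollary \ref{cor5.2}. The paper postpones that stronger regularity to the proof of Theorem \ref{thm1.4}, where the Fefferman--Stein machinery is genuinely needed for the weighted mixed-norm conclusion. So your argument works, but you are doing more than necessary here; the level-set argument buys the unweighted $L_p$ estimate with only the $L_\infty$ bound, no Lipschitz regularity required.

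There is one genuine gap. For existence when $p\in(1,2)$ you write that ``pairing with solutions of the adjoint problem yields a weak solution in $\sH_p^1$''. Duality gives the a priori estimate, but existence still proceeds by approximating the data, solving each approximate problem in $\sH_2^1(\Omega_T,d\mu)$ via Lemma \ref{L-2-lemma}, and showing the approximate solutions form a Cauchy sequence in $\sH_p^1(\Omega_T,x_d^{\alpha p}d\mu_1)$. The issue is that for $p<2$ you cannot conclude from $u^{(k)}\in\sH_2^1(\Omega_T,d\mu)$ alone that $u^{(k)}\in\sH_p^1(\Omega_T,x_d^{\alpha p}d\mu_1)$: $\Omega_T$ has infinite $\mu_1$-measure and H\"older's inequality goes the wrong way. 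The paper handles this (Appendix \ref{proof-thm4.6}, Case II) by a localization argument: one shows, via iterated energy estimates for cutoffs $u^{(k)}\eta_l$ with $\eta_l$ supported outside $\widehat Q_{2^l k}$, that $\|x_d^\alpha(|Du^{(k)}|+\sqrt\lambda|u^{(k)}|)\|_{L_2(\widehat Q_{2^{l+1}k}\setminus\widehat Q_{2^l k},d\mu_1)}$ decays super-exponentially in $l$, so that the $L_p(d\mu_1)$ norm over $\Omega_T\setminus\widehat Q_{2k}$ is finite. You should flag this step; it is not automatic.
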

The remaining part of the section is to prove this theorem. We begin with the following result on solution decomposition which is an important ingredient in the proof.
\begin{proposition} \label{Simple-approx} Let $z_0\in \overline{\Omega_T}$ and $r >0$. Suppose that $F \in L_{2}(Q_{10r}^+(z_0), d\mu)^d$, $f \in L_{2}(Q_{10r}^+(z_0), d\mu)$, and $u \in \cH^{1}_2(Q_{10r}^+(z_0),d\mu)$ is a weak solution of \eqref{eq11.52a}-\eqref{eq12.01a} in $Q_{10r}^+(z_0)$. Then we can write
\[
u(t, x) = v(t, x) + w(t, x) \quad  \text{in}\ Q_{10r}^+(z_0),
\]
where $v$ and $w$ are functions in $\cH_2^1(Q_{10r}^+(z_0), d\mu)$ and satisfy
\begin{equation}  \label{0506-tU-est}
\fint_{Q_{2r}^+(z_0)}|V|^2  \,{\mu}_1(dz)  \leq N \fint_{Q_{10r}^+(z_0)}\Big( |x_d^\alpha F|^2 +  |x_d^\alpha f|^2) \,{\mu}_1(dz)
\end{equation}
and
\begin{align} \nonumber
\|W\|_{L_\infty(Q_{r}^+(z_0))}^{2}
& \leq N  \fint_{Q_{10r}^+(z_0)} |U|^2 \,{\mu}_1(dz) \\ \label{0506-W.est}
& \qquad + N \fint_{Q_{10r}^+(z_0)} \Big(|x_d^\alpha F|^2 + |x_d^\alpha f|^2\Big) \,{\mu}_1(dz),
\end{align}
where $N = N(d,\kappa, \alpha)$ and
\[
 V=x_d^\alpha(|Dv|+\lambda^{1/2}|v|),\quad W=x_d^\alpha(|Dw|+\lambda^{1/2}|w|),\quad U=x_d^\alpha(|D u|+\lambda^{1/2}|u|).
\]
\end{proposition}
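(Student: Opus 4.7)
The plan is to split $u$ into a piece $v$ that absorbs the forcing terms $F$ and $f$ and a piece $w=u-v$ that satisfies the homogeneous equation \eqref{eq11.52}, so that the pointwise gradient estimates in Propositions \ref{interior-Linf} and \ref{lem1} apply to $w$.

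First I would extend $F$ and $f$ by zero outside $Q_{10r}^+(z_0)$ and apply Lemma \ref{L-2-lemma} on the full domain $\Omega_T$ to obtain a weak solution $v\in\sH^1_2(\Omega_T,d\mu)$ of \eqref{eq11.52a}-\eqref{eq12.01a} with these extended data. The key arithmetic identity $x_d^{2\alpha}\cdot x_d^{-\alpha}=x_d^\alpha$ gives
\[
\int_{\Omega_T}|V|^2\,\mu_1(dz)\sim \int_{\Omega_T}(|Dv|^2+\lambda|v|^2)\,\mu(dz),\qquad |x_d^\alpha F|^2\,\mu_1(dz)=|F|^2\,\mu(dz),
\]
so the global energy estimate \eqref{L2-lemma-est} for $v$ becomes, after this change of measure,
\[
\int_{\Omega_T}|V|^2\,\mu_1(dz)\le N\int_{Q_{10r}^+(z_0)}\bigl(|x_d^\alpha F|^2+|x_d^\alpha f|^2\bigr)\,\mu_1(dz).
\]
Restricting the left-hand side to $Q_{2r}^+(z_0)$, dividing by $\mu_1(Q_{2r}^+(z_0))$, and using the doubling property of $\mu_1$ on upper-half parabolic cylinders (which holds since $\alpha<1$) to absorb the ratio $\mu_1(Q_{10r}^+(z_0))/\mu_1(Q_{2r}^+(z_0))$ into a dimensional constant yields \eqref{0506-tU-est}.

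Next, set $w=u-v$ on $Q_{10r}^+(z_0)$. By linearity $w$ is a weak solution of the homogeneous equation \eqref{eq11.52} there, and it inherits the zero Dirichlet condition on $\{x_d=0\}\cap Q_{10r}^+(z_0)$ from $u$ and $v$ (automatically via Remark \ref{bdr-remark} when $\alpha\le -1$). I would then apply the pointwise gradient estimates to $w$ in two cases: when $x_{0d}>10r$, Proposition \ref{interior-Linf} applies directly at scale $\sim r$ centered at $z_0$; when $x_{0d}\le 10r$, I recenter at $\tilde z_0=(t_0,x_0',0)\in\partial\bR^{d+1}_+$ and invoke the scaled version of Proposition \ref{lem1} at a radius $\sim r$ chosen so that the source cylinder in its right-hand side is contained in $Q_{10r}^+(z_0)$. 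In either case one obtains, again with the help of $\mu_1$-doubling to pass between comparable averaging sets,
\[
\|W\|_{L_\infty(Q_r^+(z_0))}^2\le N\fint_{Q_{10r}^+(z_0)}|W|^2\,\mu_1(dz).
\]

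The proof concludes by writing $|W|^2\le 2(|U|^2+|V|^2)$ and combining with the integrated form of \eqref{0506-tU-est} over $Q_{10r}^+(z_0)$ (an immediate consequence of the global $L_2$ control from Step~1) to obtain \eqref{0506-W.est}. The main technical obstacle is the case analysis in the second step: Propositions \ref{interior-Linf} and \ref{lem1} are stated in normalized geometry and with a specific base point, so one must verify that the choice of scale and center produces an outer cylinder contained in $Q_{10r}^+(z_0)$ and that all losses can be absorbed by doubling of $\mu_1$. Once this geometric bookkeeping is done the rest is routine.
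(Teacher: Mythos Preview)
Your overall strategy is exactly the paper's: solve the equation globally with the data localized and cut off (via Lemma \ref{L-2-lemma}) to produce $v$, set $w=u-v$, and then apply Propositions \ref{interior-Linf} and \ref{lem1} to the homogeneous solution $w$ according to a case split on the distance of $z_0$ to the boundary. Your use of a single localization to $Q_{10r}^+(z_0)$, rather than the paper's case-dependent localizations to $Q_{2r}^+(z_0)$ or $Q_{8r}^+(\hat z_0)$, is a harmless simplification.

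The one genuine gap is the threshold in your case split. With the split at $x_{0d}=10r$, the boundary case can fail: if, say, $x_{0d}=9r$, then to cover $Q_r^+(z_0)$ by $Q_{R/2}^+(\tilde z_0)$ you need $R/2\ge r+x_{0d}=10r$, hence $R\ge 20r$; but then $Q_R^+(\tilde z_0)$ contains points at distance up to $R+x_{0d}\ge 29r$ from $x_0$ and cannot sit inside $Q_{10r}^+(z_0)$. So no admissible radius exists, and this is exactly the ``geometric bookkeeping'' you flagged but did not carry out. The remedy is to split at a much smaller threshold: the paper takes $x_{0d}\gtrless 2r$. In the interior case $x_{0d}>2r$ one applies Proposition \ref{interior-Linf} at scale $2r$ (which only needs $B_{2r}(x_0)\subset\bR^d_+$), and in the boundary case $x_{0d}\le 2r$ one has the chain $Q_r^+(z_0)\subset Q_{4r}^+(\hat z_0)\subset Q_{8r}^+(\hat z_0)\subset Q_{10r}^+(z_0)$, so the scaled Proposition \ref{lem1} at $\hat z_0$ with radius $8r$ gives the $L_\infty$ bound on $Q_{4r}^+(\hat z_0)\supset Q_r^+(z_0)$. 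With this correction your argument goes through.
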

\begin{proof} We write $z_0 = (t_0, x_0)$ with $x_0 = (x_0', x_{0d}) \in \bR^{d-1} \times \bR_+$. We split the proof into the interior case and the boundary case.

\noindent{\bf Case I}. Consider $x_{0d} > 2r$. Let $v \in \sH^1_2(\Omega_T, d\mu)$ be a weak solution of the equation
\begin{equation*}
\begin{split}
& x_d^\alpha  ( {\overline{a}_0(x_d)}v_t  +\lambda {\overline{c}_0(x_d)} v)  - D_i
\big(x_d^\alpha(\overline{a}_{ij}(x_d) D_j v - F_{i}(z)\chi_{Q_{2r}^+(z_0)}(z))\big)  \\
& =  \lambda^{1/2} x_d^\alpha f (z) \chi_{Q_{2r}^+(z_0)}(z)  \quad \text{in} \  \Omega_T
\end{split}
\end{equation*}
with the boundary condition $ v=  0$ on $(-\infty, T) \times \{x_d =0\}$. Then \eqref{0506-tU-est} follows from Lemma \ref{L-2-lemma} and the doubling property of $\mu_1$. Now let $w = u - v$ so that $w \in \cH_2^1(Q_{2r}^+(z_0) , d\mu)$ is a weak solution of
\[
x_d^\alpha  (\overline{a}_0(x_d) w_t +\lambda \overline{c}_0(x_d) w)   - D_i \big(x_d^\alpha \overline{a}_{ij}(x_d) D_j  w\big)   = 0 \quad \text{in} \  Q_{2r}^+(z_0).
\]
By Proposition \ref{interior-Linf} and the triangle inequality, we obtain
\[
\|W\|_{L_\infty(Q_{r}^+(z_0))}^{2}
\leq N  \fint_{Q_{2r}^+(z_0)} |U|^2 \,{\mu}_1(dz) + N \fint_{Q_{2r}^+(z_0)} \Big(|x_d^\alpha F|^2 + |x_d^\alpha f|^2\Big) \,{\mu}_1(dz).
\]
From this, we get \eqref{0506-W.est} by using the doubling property of $\mu_1$.

\noindent{\bf Case II}. Consider $x_{0d} \le 2r$. Let $\hat{z}_0 = (t_0, x_0', 0)$ and  $v \in \sH^1_2(\Omega_T, d\mu)$ be a weak solution of the equation
\begin{equation*}
\begin{split}
& x_d^\alpha  ( {\overline{a}_0(x_d)}v_t  +\lambda {\overline{c}_0(x_d)} v)  - D_i
\big(x_d^\alpha(\overline{a}_{ij}(x_d) D_j v - F_{i}(z)\chi_{Q_{8r}^+(\hat{z}_0)}(z))\big)  \\
& =  \lambda^{1/2} x_d^\alpha f (z) \chi_{Q_{8r}^+(\hat{z}_0)}(z)  \quad \text{in} \  \Omega_T
\end{split}
\end{equation*}
with the boundary condition $v =0$ on $(-\infty, T) \times\{x_d =0\}$. Then, it follows from Lemma \ref{L-2-lemma} that
\begin{equation} \label{V-L-2-0716}
\fint_{Q_{8r}^+(\hat z_0)}|V|^2  \,{\mu}_1(dz)  \leq N \fint_{Q_{8r}^+(\hat z_0)}\Big( |x_d^\alpha F|^2 +  |x_d^\alpha f|^2) \,{\mu}_1(dz).
\end{equation}
As $Q_{2r}^+(z_0) \subset Q_{8r}^+(\hat{z}_0) \subset Q_{10r}^+(z_0)$,  \eqref{0506-tU-est} follows from \eqref{V-L-2-0716} and the doubling property of $\mu_1$.

Now, let $w = u - v$ so that $w \in \sH_2^1(Q_{8r}^+(\hat{z}_0), d\mu)$ is a weak solution of
\[
x_d^\alpha  (\overline{a}_0(x_d) w_t +\lambda \overline{c}_0(x_d) w)   - D_i \big(x_d^\alpha \overline{a}_{ij}(x_d) D_j  w\big)   = 0 \quad \text{in} \  Q_{8r}^+(\hat{z}_0)
\]
with boundary condition $w = 0$ on $\{x_d =0\} \cap \overline{Q^+_{8r}(\hat{z}_0)}$. Then, applying Proposition \ref{lem1} with suitable scaling, the triangle inequality, and \eqref{V-L-2-0716}, we obtain
\[
\begin{split}
& \|W\|_{L_\infty(Q_{4r}^+(\hat{z}_0))} \leq N\left(\fint_{Q_{8r}^+(\hat{z}_0)} |W|^2 \mu_1(dz) \right)^{1/2} \\
& \leq N\left(\fint_{Q_{8r}^+(\hat{z}_0)} |U|^2 \mu_1(dz) \right)^{1/2} + N \left(\fint_{Q_{8r}^+(\hat{z}_0)} |V|^2 \mu_1(dz) \right)^{1/2} \\
& \leq N\left(\fint_{Q_{8r}^+(\hat{z}_0)} |U|^2 \mu_1(dz) \right)^{1/2} +  N\left(\fint_{Q_{8r}^+(\hat{z}_0)}( |x_d^\alpha  F|^2  + |x_d^\alpha f|^2) \mu_1(dz) \right)^{1/2}.
\end{split}
\]
Then, \eqref{0506-W.est} follows as $Q_{2r}^+(z_0) \subset Q_{4r}^+(\hat{z}_0) \subset Q_{8r}^+(\hat{z}_0) \subset Q_{10r}^+(z_0)$. The proof of the proposition is completed.
\end{proof}
\begin{proof}[Proof of Theorem \ref{thm3}]
We use an idea which is similar to that of \cite[Theorem 4.1]{DP20}. For $p \in (2, \infty)$, we use a real variable argument by applying Proposition \ref{Simple-approx}. For $p \in (1, 2)$, we use a duality argument. Nevertheless, some details need to be carried out to adapt the proof of \cite[Theorem 4.1]{DP20} to our case. For completeness, we present them in Appendix \ref{proof-thm4.6}.
\end{proof}
\section{Equation with measurable coefficients}
                                            \label{sec5}
In this section, we give the proofs of Theorem \ref{thm2}, Corollary \ref{main-thrm}, and Theorem \ref{thm1.4}. For Theorem \ref{thm2}, we apply the level set argument introduced in \cite{Cal}. The proof of Corollary \ref{main-thrm} follows from a localization technique and the duality argument adapting the ideas in \cite{KRW20}. For the proof of Theorem \ref{thm1.4}, we apply the perturbation technique using the method of mean oscillation estimates introduced in \cite{Krylov} and developed in \cite{MR3812104}.
\subsection{Proof of Theorem \ref{thm2}} We begin with the following proposition that is similar to Proposition \ref{Simple-approx}.
\begin{proposition} \label{G-approx-propos} Let $\delta_0 \in (0, 1)$, $\alpha \in (-\infty, 1)$,  $r \in (0, \infty)$,  $z_0\in \overline{\Omega_T}$, and $q \in (2,\infty)$. Suppose that $G=  x_d^{\alpha}(|F| + |f|) \in L_{2}(Q_{10r}^+(z_0), d\mu_1)$ and
$$
u \in \sH^{1}_q(Q_{10r}^+(z_0),x_d^{\alpha q}d\mu_1)
$$
is a weak solution of \eqref{eq3.23}-\eqref{m-bdr-cond} in $Q_{10r}^+(z_0)$. If \textup{Assumption \ref{assump1} ($\delta_0, R_0$)} is satisfied and $\textup{spt}(u) \subset   (s - (R_0r_0)^2, s + (R_0r_0)^2) \times \bR^{d}_+$ for some $r_0>0$ and $s \in \bR$, then we have
\[
u(t, x) = v(t, x) + w(t, x) \quad  \text{in}\  Q_{10r}^+(z_0),
\]
where $v$ and $w$ are functions in $\sH_2^1(Q_{10r}^+(z_0), d\mu)$ that satisfy
\begin{align} \nonumber
\fint_{Q_{2r}^+(z_0)} |V|^2 \,{\mu_1}(dz) &  \leq N \fint_{Q_{10r}^+(z_0)} |G|^{2} \,{\mu_1}(dz)   \\ \label{B-u-tilde-est-inter}
& \quad +  N(\delta_0^{1-2/q}+ r_0^{2-4/q}) \left(\fint_{Q_{10r}^+(z_0)} |x_d^{\alpha}Du|^q \,{\mu_1}(dz) \right)^{2/q}
\end{align}
and
\begin{align} \label{D-L-infty-w-inter}
\|W\|_{L_\infty(Q_{r}^+(z_0))}^{2} \leq N  \fint_{Q_{10r}^+(z_0)} |U|^{2}\,{\mu_1}(dz) +  N\fint_{Q_{10r}^+(z_0)}  |G|^{2} \,{\mu_1}(dz),
\end{align}
where
$$
V=x_d^{\alpha}(|Dv|+ \sqrt{\lambda}|v|),\quad W=x_d^{\alpha} (|Dw|+\sqrt{\lambda}|w|),\quad U=x_d^{\alpha}(|D u|+\sqrt{\lambda}|u|),
$$
and
$N = N(d,\alpha, \kappa,  q)$.
\end{proposition}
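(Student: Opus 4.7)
The plan is to follow the template established in Proposition \ref{Simple-approx}, but to accommodate the genuine $(t,x')$-dependence of $a_{ij}$ by freezing the coefficients with respect to the tangential variables and treating the perturbation as an additional source term. Concretely, I would split into the interior case $x_{0d} > 2r$ and the boundary case $x_{0d} \le 2r$; in the boundary case set $\hat z_0 = (t_0, x_0', 0)$ and take the averaged coefficients $\bar a_{ij}(x_d) = [a_{ij}]_{10r,\hat z_0}(x_d)$ (with analogous averages in the interior case).

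Next, let $v \in \sH_2^1(\Omega_T, d\mu)$ solve
\[
x_d^\alpha v_t + \lambda x_d^\alpha v - D_i\bigl(x_d^\alpha(\bar a_{ij}(x_d) D_j v - \tilde F_i)\bigr) = \sqrt\lambda x_d^\alpha \tilde f \quad\text{in}\ \Omega_T
\]
with $v=0$ on $\{x_d=0\}$, where
\[
\tilde F_i = \bigl(F_i + (a_{ij} - \bar a_{ij}) D_j u\bigr)\chi_{Q_{10r}^+(\hat z_0)},\qquad \tilde f = f\,\chi_{Q_{10r}^+(\hat z_0)}.
\]
Applying Lemma \ref{L-2-lemma} (with $\bar a_0 = \bar c_0 = 1$) gives
\[
\|V\|_{L_2(\Omega_T, d\mu_1)}^2 \le N\int_{Q_{10r}^+(\hat z_0)} \Bigl(|x_d^\alpha F|^2 + |x_d^\alpha f|^2 + |(a_{ij}-\bar a_{ij})\,x_d^\alpha Du|^2\Bigr) d\mu_1.
\]
The first two terms feed directly into the $G$-term of \eqref{B-u-tilde-est-inter} after using the doubling property of $\mu_1$. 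For the coefficient perturbation, I would apply H\"older's inequality with conjugate pair $(q/2,q/(q-2))$:
\[
\fint_{Q_{10r}^+} \bigl|(a_{ij}-\bar a_{ij})\,x_d^\alpha Du\bigr|^2 d\mu_1 \le \Bigl(\fint_{Q_{10r}^+}\!\!|a_{ij}-\bar a_{ij}|^{\frac{2q}{q-2}} d\mu_1\Bigr)^{\!\frac{q-2}{q}}\!\!\Bigl(\fint_{Q_{10r}^+}\!\!|x_d^\alpha Du|^q d\mu_1\Bigr)^{\!\frac{2}{q}}.
\]
Since $|a_{ij}-\bar a_{ij}|\le 2\kappa^{-1}$, the first factor is bounded by $N\bigl(\fint|a_{ij}-\bar a_{ij}|\,d\mu_1\bigr)^{(q-2)/q}$. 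When $10r \le R_0$, Assumption \ref{assump1} yields the $\delta_0^{1-2/q}$ contribution. When $10r > R_0$ the assumption does not apply at scale $r$, and here the support hypothesis on $u$ must be invoked: by covering $Q_{10r}^+$ with subcylinders of radius comparable to $R_0$, applying the BMO bound only on those subcylinders whose time window meets $(s-(R_0r_0)^2,s+(R_0r_0)^2)$, and using a trivial $L^\infty$ bound on the remaining ones (where $Du=0$), one recovers an extra $r_0^{2(q-2)/q} = r_0^{2-4/q}$ factor from the ratio of active to total temporal volume. This yields \eqref{B-u-tilde-est-inter}.

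For $w := u - v$, by construction $w \in \sH_2^1(Q_{10r}^+(\hat z_0), d\mu)$ is a weak solution of the homogeneous equation
\[
x_d^\alpha w_t + \lambda x_d^\alpha w - D_i\bigl(x_d^\alpha \bar a_{ij}(x_d) D_j w\bigr) = 0
\]
on a slightly smaller cylinder, with zero lateral boundary data on $\{x_d=0\}$ in the boundary case. Proposition \ref{lem1} (boundary) or Proposition \ref{interior-Linf} (interior), applied after appropriate rescaling, then controls $\|W\|_{L_\infty(Q_r^+(z_0))}$ by the $L^2(\mu_1)$ average of $W$ on the larger cylinder. A triangle inequality $W\le U+V$ combined with the bound on $\|V\|_{L_2(d\mu_1)}$ above produces \eqref{D-L-infty-w-inter}.

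The main obstacle I anticipate is extracting the precise factor $r_0^{2-4/q}$ in the large-scale case $r > R_0$: one must arrange the covering of $Q_{10r}^+$ by $R_0$-scale pieces, carefully keep track of how the normalizations in $\fint$ interact with the measure of the time-support of $u$, and match powers to the exponent $(q-2)/q$ appearing in the H\"older split. The other steps --- the $L_2$ solvability of $v$, the $\delta_0$-smallness part of the BMO estimate, and the pointwise bound on $w$ --- are essentially routine adaptations of the constant-coefficient case.
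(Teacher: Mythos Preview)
Your approach is essentially the paper's, with two simplifications you are missing. First, there is no need to re-run the interior/boundary split here: once you establish the estimate on the coefficient perturbation $\tilde F_i = (\bar a_{ij} - a_{ij})D_j u$ (note the sign---with your choice $(a_{ij}-\bar a_{ij})D_j u$ the difference $w=u-v$ does \emph{not} solve the homogeneous frozen equation), you simply observe that $u$ is a weak solution of the frozen-coefficient equation with source $\tilde F + F$ and $f$ on $Q_{10r}^+(z_0)$ and invoke Proposition~\ref{Simple-approx} directly; that proposition already contains the interior/boundary dichotomy.

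Second, and more to the point, the large-scale case $r \ge R_0/10$ requires no covering by $R_0$-subcylinders at all. One uses only the trivial bound $|a_{ij} - \bar a_{ij}| \le 2\kappa^{-1}$ together with the fact that $Du$ is supported in the time slab $(s-(R_0r_0)^2,s+(R_0r_0)^2)$. H\"older's inequality with exponents $\tfrac{q}{q-2},\tfrac{q}{2}$ then gives
\begin{align*}
\fint_{Q_{10r}^+(z_0)} |x_d^\alpha \tilde F|^2\,d\mu_1
&\le N\Bigl(\fint_{Q_{10r}^+(z_0)} \chi_{(s-(R_0r_0)^2,\,s+(R_0r_0)^2)}(t)\,d\mu_1\Bigr)^{1-2/q}\Bigl(\fint_{Q_{10r}^+(z_0)} |x_d^\alpha Du|^q\,d\mu_1\Bigr)^{2/q}\\
&\le N\Bigl(\frac{R_0 r_0}{r}\Bigr)^{2-4/q}\Bigl(\fint_{Q_{10r}^+(z_0)} |x_d^\alpha Du|^q\,d\mu_1\Bigr)^{2/q},
\end{align*}
and since $r \ge R_0/10$ the prefactor is at most $N r_0^{2-4/q}$. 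Your anticipated obstacle therefore dissolves. One last remark: for \eqref{D-L-infty-w-inter} you should combine the pointwise estimate on $w$ with the crude bound $|x_d^\alpha \tilde F| \le N|x_d^\alpha Du| \le NU$ rather than the refined $\delta_0,r_0$ bound on $V$, so that the right-hand side contains only $|U|^2$ and $|G|^2$ as stated.
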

\begin{proof} Let $\tilde F = (\tilde F_1, \tilde F_2,\ldots, \tilde F_d)$, where
\[
\tilde F_{i}(t,x) = \big( \overline{a}_{ij}(x_d)-a_{ij}(t,x)\big)D_j u (t,x),
\]
where $\overline{a}_{ij}(x_d)= [a_{ij}]_{10r, {z}_0}(x_d)$ are defined in Assumption \ref{assump1}.

If $r \in (0, R_0/10)$,  by H\"{o}lder's inequality, the boundedness of the matrix $(a_{ij})$ in \eqref{ellipticity},  and Assumption \ref{assump1} ($\delta_0, R_0$), we have
\begin{align*}
 & \fint_{Q_{10r}^+(z_0)} |x_d^{\alpha}\tilde F(z)|^2 \,{\mu_1}(dz)  \\
& \leq \left(\fint_{Q_{10r}^+(z_0)} |a_{ij} -\overline{a}_{ij}(x_d)|^{\frac{2q}{q-2}} \,{\mu_1}(dz) \right)^{\frac{q-2}{q}} \left(\fint_{Q_{10r}^+(z_0)} |x_d^{\alpha}D u|^{q} \,{\mu_1}(dz) \right)^{\frac{2}{q}} \\ \nonumber
& \leq N \delta_0^{\frac{q-2}{q}} \left(\fint_{Q_{10r}^+(z_0)} |x_d^{\alpha} D u |^q \,{\mu_1}(dz) \right)^{2/q}.
\end{align*}
On the other hand, when $r \geq R_0/10$, as $\text{spt}(u) \subset  (s - (R_0r_0)^2, s + (R_0r_0)^2) \times \bR^{d}_+$ and by using the boundedness of the matrix $(a_{ij})$ in \eqref{ellipticity}, we have
\begin{align*} 
& \fint_{Q_{10r}^+(z_0)} |x_d^{\alpha}\tilde F(z)|^2 \,{\mu_1}(dz)  \\
& \leq N\left(\fint_{Q_{10r}^+(z_0)}
\chi_{(s - (R_0r_0)^2, s + (R_0r_0)^2)}(t)  \,{\mu_1}(dz) \right)^{\frac{q-2}{q}} \left(\fint_{Q_{10r}^+(z_0)} |x_d^{\alpha}D u|^{q} \,{\mu_1}(dz) \right)^{\frac{2}{q}} \\
& \leq N \Big(\frac{R_0 r_0}{r} \Big)^{\frac{2(q-2)}{q}} \left(\fint_{Q_{10r}^+(z_0)} |x_d^{\alpha} D u |^q \,{\mu_1}(dz) \right)^{2/q} \\
& \leq N r_0^{\frac{2(q-2)}{q}} \left(\fint_{Q_{10r}^+(z_0)} |x_d^\alpha D u |^q \,{\mu_1}(dz) \right)^{2/q}.
\end{align*}
Therefore, in both cases we have
\begin{align} \nonumber
&  \fint_{Q_{10r}^+(z_0)} |x_d^{\alpha}\tilde F(z)|^2\, {\mu_1}(dz)\\   \label{G-inter-est}
& \leq N \Big(r_0^{\frac{2(q-2)}{q}} + \delta_0^{\frac{q-2}{q}}\Big)
 \left(\fint_{Q_{10r}^+(z_0)} |x_d^{\alpha} D u |^q \,{\mu_1}(dz) \right)^{2/q}.
\end{align}
Since $u \in \sH_{q}^1(Q_{10r}^+(z_0), x_d^{\alpha q} d\mu_1)$ is a weak solution of
$$
x_d^\alpha  (\partial_t u +\lambda u) -  D_i \big(x_d^\alpha(\overline{a}_{ij}(x_d) D_{j} u-\tilde F_i-F_i) \big)   = \lambda^{1/2} x_d^\alpha f
$$
in $Q_{10r}^+(z_0)$ and \eqref{m-bdr-cond}, applying Proposition \ref{Simple-approx} with $\tilde F+F$ in place of $F$ and using \eqref{G-inter-est}, we obtain \eqref{B-u-tilde-est-inter} and \eqref{D-L-infty-w-inter}. The proposition is proved.
\end{proof}
\begin{proof}[Proof of Theorem \ref{thm2}]
We only need to prove Theorem \ref{thm2} when $p \in (2, \infty)$ as the case $p \in (1,2)$ can be  proved by  using the duality argument as in the proof of Theorem \ref{thm3}. See Appendix \ref{proof-thm4.6}. We first prove the estimate \eqref{main-thm-est} for each weak solution $u \in \sH_p^1(\Omega_T, x_d^{\alpha p}d\mu_1)$ of \eqref{eq3.23}. We suppose that $\lambda>0$.
Assume for a moment that
$$
\textup{spt}(u) \subset  (s - (R_0r_0)^2, s + (R_0r_0)^2) \times \bR^{d}_+
$$
with some $s \in (-\infty, T)$ and $r_0 \in (0,1)$. We claim that  \eqref{main-thm-est} holds if $\delta_0$ and  $r_0$ are sufficiently small depending on $d$, $\alpha$, $\kappa$, and $p$. Let $q \in (2, p)$ be fixed.  By H\"{o}lder's inequality and using $\alpha<1$, we  have $u \in \sH_{q, \text{loc}}^1(\Omega_T, x_d^{\alpha q} d\mu_1)$. Applying Proposition \ref{G-approx-propos}, for each $r >0$ and $z_0 \in \overline{\Omega_T}$, we can write
\[
u(t, x) = v(t, x) + w(t, x) \quad  \text{in}\ Q_{10r}^+(z_0),
\]
where $v$ and $w$ satisfy \eqref{B-u-tilde-est-inter} and \eqref{D-L-infty-w-inter}.  Then it follows from the standard real variable argument (see, for example, \cite{DK11b} and \cite[Lemma A.20]{DK19}) that
\[
\begin{split}
& \|x_d^{\alpha}Du\|_{L_p(\Omega_T, d\mu_1)} + \sqrt{\lambda} \|x_d^{\alpha} u\|_{L_p(\Omega_T, d\mu_1)}  \\
& \leq N(\delta_0^{1-2/q} + r_0^{2-4/q}) \|x_d^{\alpha} Du\|_{L_{p}(\Omega_T, d\mu_1)} + N\|x_d^{\alpha} (|F|+|f|)\|_{L_p(\Omega_T, d\mu_1)},
\end{split}
\]
where $N = N(d, \alpha, \kappa, p)$. From this, and by choosing $\delta_0$ and $r_0$ sufficiently small so that $N(\delta_0^{1-2/q} + r_0^{2-4/q}) < 1/2$, we obtain \eqref{main-thm-est}.

Now, we remove the assumption that $\textup{spt}(u) \subset  (s -( R_0r_0)^2, s+ (R_0r_0)^2) \times \bR^{d}_+$ by using a partition of unity argument. The proof is standard, but the details are slightly different so we give them here. Let
$$
\xi=\xi(t) \in C_0^\infty(-(R_0r_0)^2, (R_0r_0)^2)
$$
be a standard non-negative cut-off function satisfying
\begin{equation} \label{xi-0515}
\int_{\bR} \xi^p(s)\, ds =1, \quad  \int_{\bR}|\xi'(s)|^p\,ds \leq \frac{N}{(R_0r_0)^{2p}}.
\end{equation}
For any $s \in (-\infty,  \infty)$, let $u^{(s)}(z) = u(z) \xi(t-s)$ for $z = (t, x) \in \Omega_T$. Then $u^{(s)} \in \sH_p^1(\Omega_T, x_{d}^{\alpha p}d\mu_1)$ is a weak solution of
\[
x_d^\alpha( u^{(s)}_t + \lambda u^{(s)}) -D_i\big(x_d^\alpha(a_{ij} D_j u^{(s)} - F^{(s)}_{i})\big)  = \lambda^{1/2} x_d^\alpha f^{(s)}\]
in $\Omega_T$ with the boundary condition $u^{(s)} =0$ on $(-\infty, T) \times \{x_d =0\}$, where
\[
F^{(s)}(z) = \xi(t-s) F(z), \quad f^{(s)}(z)   = \xi(t-s) f(z)  +  \lambda^{-1/2}\xi'(t-s) u(z).
\]
As $\text{spt}(u^{(s)}) \subset (s -( R_0r_0)^2, s+ (R_0r_0)^2) \times \bR^{d}_{+}$, we can apply the estimate we just proved to infer that
\begin{align*}
& \|x_d^{\alpha}Du^{(s)}\|_{L_p(\Omega_T, d\mu_1)} + \sqrt{\lambda} \|x_d^{\alpha}u^{(s)}\|_{L_p(\Omega_T, d\mu_1)}\\
 &  \leq N \|x_d^{\alpha} F^{(s)}\|_{L_p(\Omega_T, d\mu_1)} +N\| x_d^{\alpha} f^{(s)}\|_{L_p(\Omega_T, d\mu_1)}.
\end{align*}
Raising to the $p$-th power and integrating this estimate with respect to $s$, we get
\begin{equation} \label{int-0515}
\begin{split}
& \int_{\bR}\Big( \|x_d^{\alpha} Du^{(s)}\|_{L_p(\Omega_T, d\mu_1)}^p + \lambda^{p/2} \|x_d^{\alpha} u^{(s)}\|^p_{L_p(\Omega_T, d\mu_1)}\Big)\, ds\\
&  \leq N\int_{\bR} \Big( \| x_d^{\alpha} F^{(s)}\|^p_{L_p(\Omega_T, d\mu_1)} + \|x_d^{\alpha} f^{(s)}\|^p_{L_p(\Omega_T, d\mu_1)} \Big)\, ds.
\end{split}
\end{equation}
It follows from the Fubini theorem and \eqref{xi-0515} that
\begin{align*}
\int_{\bR}\|x_d^{\alpha} Du^{(s)}\|_{L_p(\Omega_T,d\mu_1)}^p\, ds &= \int_{\Omega_T}\int_{\bR} |x_d^{\alpha} Du(z)|^p \xi^p(t-s)\, ds\,\mu_1(dz)\\
&= \|x_d^{\alpha} Du\|_{L_p(\Omega_T, d\mu_1)}^p.
\end{align*}
Similarly,
\begin{align*}
& \int_{\bR}\|x_d^{\alpha} u^{(s)}\|_{L_p(\Omega_T,d\mu_1)}^p\, ds = \|x_d^{\alpha} u\|_{L_p(\Omega_T, d\mu_1)}^p,  \\
 & \int_{\bR}\|x_d^{\alpha} F^{(s)}\|_{L_p(\Omega_T,d\mu_1)}^p\, ds = \|x_d^{\alpha} F\|_{L_p(\Omega_T, d\mu_1)}^p.
\end{align*}
Because $r_0$ depends only on $d$, $\alpha$, $\kappa$, and $p$, from the definition of $f^{(s)}$, \eqref{xi-0515}, and the Fubini theorem, we have
\[
\begin{split}
&\left(\int_{\bR} \|x_d^{\alpha} f^{(s)}\|_{L_p(\Omega, d\mu_1)}^p\, ds \right)^{1/p}\\ &\leq N  \|x_d^{\alpha} f\|_{L_p(\Omega_T, d\mu_1)} +  NR_0^{-2} \lambda^{-1/2}\|x_d^{\alpha} u\|_{L_p(\Omega_T, d\mu_1)}
\end{split}
\]
for $N = N(d,\alpha, \kappa, p)$.  Collecting the estimates that we have just derived, we infer from \eqref{int-0515} that
\[
\begin{split}
& \|x_d^{\alpha} Du\|_{L_p(\Omega_T, d\mu_1)} + \sqrt{\lambda} \|x_d^{\alpha} u\|_{L_p(\Omega_T, d\mu_1)}\\
&  \leq N\|x_d^{\alpha} F \|_{L_p(\Omega_T, d\mu_1)} +N\|x_d^{\alpha} f\| _{L_p(\Omega_T, d\mu_1)} + NR_0^{-2}\lambda^{-1/2}\|x_d^{\alpha} u\|_{L_p(\Omega_T, d\mu_1)}
\end{split}
\]
with $N=N(d,\alpha, \kappa, p)$. Now we choose $\lambda_0 = 2N$. For $\lambda \geq \lambda_0 R_0^{-2}$, we have $NR_0^{-2}\lambda^{-1/2} \leq \sqrt\lambda/2$, and therefore
\[
\begin{split}
& \|x_d^{\alpha} Du \|_{L_p(\Omega_T, d\mu_1)}
+ \sqrt{\lambda} \|x_d^{\alpha} u \|_{L_p(\Omega_T, d\mu_1)}\\
&  \leq   N \|x_d^{\alpha} F \|_{L_p(\Omega_T, d\mu_1)} +N \|x_d^{\alpha} f\| _{L_p(\Omega_T, d\mu_1)}+\frac{\sqrt{\lambda}}{2} \|x_d^{\alpha} u\|_{L_p(\Omega_T, d\mu_1)} ,
\end{split}
\]
which yields \eqref{eq3.23}.

Finally, the solvability of solution $u \in \sH_p^1(\Omega_T, x_d^{\alpha p} d\mu_1)$ can be obtained by the method of continuity using the solvability of the equation
\[
\left\{
\begin{aligned}
x_d^{\alpha}(u_t + \lambda u) - D_i(x_d^\alpha (D_i u - F_i)) & = \lambda^{1/2} x_d^\alpha f  \quad \text{in} \ \Omega_T, \\
u  & =  0\quad \text{on} \ \{x_d =0\}
\end{aligned} \right.
\]
in Theorem \ref{thm3}. The proof is now completed.
\end{proof}
\subsection{Proof of Corollary \ref{main-thrm}}
We now give the proof of Corollary \ref{main-thrm}.

\begin{proof}
We exploit an idea in \cite{KRW20}, which makes use of a duality argument.  Let $p_1 >p_0$ be such that
 \[
\left\{
\begin{aligned}
\frac{1}{p_0} \le  \frac{1}{d+2+ \alpha_-}+ \frac{1}{p_1} & \quad \text{if} \quad d \geq 2\\
\frac{1}{p_0} \le \frac{1}{4+ \alpha_-}+ \frac{1}{p_1} & \quad \text{if} \quad d =1.
\end{aligned} \right.
\]
Since $u \in \sH_{p_0}^1(Q_2^+, x_d^{\alpha p_0} d\mu_1)$, it follows from Lemma \ref{lem2.2} that
\begin{equation} \label{0831.emb}
u \in L_{p_1}(Q_2^+, x_d^{\alpha p_1}d\mu_1).
\end{equation}
{\bf Case I}:  $p \leq p_1$. Without loss of generality, we may assume that $p^*\le p_0$ because otherwise we can replace $p^*$ with $p_0$ (noting that \eqref{eq3.19} and \eqref{eq3-2.19} still hold) and use H\"older's inequality.
Let $\eta\in C_0^\infty((-4,4)\times B_2)$ be such that $\eta\equiv 1$ on $Q_1$.  By a direct calculation, we see that $w = u\eta\in \sH^1_{p_0}(\Omega_0, x_d^{\alpha p_0}d\mu_1)$ is a weak solution of
\begin{equation} \label{eq6.16}
x_d^\alpha (w_t+\lambda w)- D_i\big(x_d^\alpha (a_{ij} D_j w - \widetilde F_i)\big)   =   x_d^\alpha \tilde f  \quad \text{in}\, (-4,0)\times \bR^d_+
\end{equation}
with the boundary condition $w =0$ on $(-4, 0) \times \partial \bR^d_+$ and the zero initial condition $w(-4,\cdot)=0$,
where
$$
\widetilde F_i=F_i\eta-a_{ij}u D_j\eta,
\quad \tilde f=f\eta+\lambda u\eta +u\eta_t-D_i\eta(a_{ij} D_ju-F_i),
$$
and $\lambda> \lambda_0 R_0^{-2}$ is a constant which will be chosen at the end.

Next, let $q=p/(p-1)$,  $q_0=p_0/(p_0-1)$, and $G=(G_1,\ldots,G_d) \in C_0^\infty(Q_1^+)^{d}$ and $g \in C_0^\infty(Q_1^+)$ satisfy
$$
\|G\|_{L_q(Q_1^+,d\mu_1)} = \|g\|_{L_q(Q_1^+,d\mu_1)} =1.
$$
By Theorem \ref{thm2}, there is a weak solution
 $v\in \sH^1_{q_0}((-4,0)\times \bR^d_+, x_d^{
\alpha q_0}d\mu_1)$ to
\begin{equation} \label{eq6.25}
-x_d^\alpha (v_t -\lambda v) - D_i\big(x_d^\alpha {a}_{ji} D_j v - G_i\big)  = \sqrt{\lambda} g \ \ \text{in}\ (-4,0)\times \bR^d_+
\end{equation}
with the boundary condition $v =0$ on $(-4, 0) \times \partial \bR^d_+$ and the zero terminal condition $v(0,\cdot)=0$. Since $q\le q_0$, and $G$ and $g$ are compactly supported, following the proof of Theorem \ref{thm3} \textup{\bf{(ii)}}, we have $v\in \sH^1_{q}((-4,0)\times \bR^d_+, x_d^{\alpha q}d\mu_1)$.
Moreover,
\begin{equation}
                                        \label{eq2.53}
\sqrt \lambda \|x_d^{\alpha}v\|_{L_q((-4,0)\times \bR^d_+,d\mu_1)}+\|x_d^{\alpha} Dv\|_{L_q((-4,0)\times \bR^d_+,d\mu_1)}\le N.
\end{equation}
Testing \eqref{eq6.16} and \eqref{eq6.25} with $v$ and $u\eta$ respectively, we get
\begin{align*}
& \int_{Q_1^+} \Big[ (x_d^{\alpha}D u)\cdot G + \sqrt{\lambda}(x_d^\alpha) g \Big]\,d\mu_1(z)\\
& =\int_{Q_2^+}\Big[ (x_d^\alpha D v)\cdot (x_d^\alpha  \widetilde F) + (x_d^\alpha v) (x_d^\alpha \tilde f) \Big]\,d\mu_1(z).
\end{align*}
Then, it follows from H\"older's inequality that
\begin{align} \notag
& \Big|\int_{Q_1^+}  \Big[ (x_d^\alpha D u) \cdot G + \sqrt{\lambda}(x_d^\alpha) g\Big]\,d\mu_1(z)\Big| \\ \label{eq7.12}
&\le \| x_d^\alpha Dv\|_{L_q(Q_2^+,d\mu_1)}\|x_d^\alpha \widetilde F\|_{L_p(Q_2^+,d\mu_1)}\\ \notag
& \qquad +\|x_d^\alpha v\|_{L_{q^*}(Q_2^+,d\mu_1)}\|x_d^\alpha \tilde f\|_{L_{p^*}(Q_2^+,d\mu_1)},
\end{align}
where $q^*=p^*/(p^*-1)$. From \eqref{eq6.25}, we see that $v\in \sH^1_{q}(Q_{2}^+, x_d^{\alpha q}d\mu_1)$ satisfies
\begin{equation*} 
-x_d^\alpha v_t = D_i\big(x_d^\alpha {a}_{ji} D_j v \big) -D_i G_i  +  (-\lambda v x_d^{\alpha} + \sqrt{\lambda}g)
 \quad\text{in}\  Q_{2}^{+}.
\end{equation*}
When $\alpha\neq 0$, by \eqref{eq3.19}-\eqref{eq3-2.19}, $q^*$ satisfies the condition \eqref{eq2.06} in Lemma \ref{lem2.2}. Then by using Lemma \ref{lem2.2} and \eqref{eq2.53}, we get
\begin{align}
                                \label{eq3.07}
&\|x_d^\alpha v\|_{L_{q^*}(Q_2^+,d\mu_1)}\notag\\
&\le N\|x_d^\alpha  v\|_{L_{q}(Q_{2}^+,d\mu_1)}+ N\|x_d^\alpha  Dv\|_{L_{q}(Q_{2}^+,d\mu_1)} +  N\|x_d^\alpha  v_t\|_{\bH_q^{-1}(Q_{2}^+, d\mu_1)}   \notag\\
&\leq N+ N\|G\|_{L_{q}(Q_{2}^+,d\mu_1)}+N\|-\lambda v x_d^{\alpha} + \sqrt{\lambda}g\|_{L_{q}(Q_{2}^+,d\mu_1)}
\le N\sqrt\lambda.
\end{align}
When $\alpha=0$, by the usual unweighted parabolic Sobolev embedding, we still get \eqref{eq3.07}. It then follows from \eqref{eq7.12}, \eqref{eq2.53}, \eqref{eq3.07}, and the arbitrariness of $G$ and $g$ that
\begin{align}
                    \label{eq2.57}
&\|x_d^{\alpha}Du\|_{L_p(Q_1^+,d\mu_1)} + \sqrt{\lambda} \|x_d^{\alpha}u\|_{L_p(Q_1^+,d\mu_1)} \notag\\
&\le N\|x_d^{\alpha} \widetilde F\|_{L_p(Q_2^+,d\mu_1)}+N \sqrt\lambda \|x_d^{\alpha} \tilde f\|_{L_{p^*}(Q_2^+,d\mu_1)}\notag\\
&\le N(\sqrt\lambda+1)\|x_d^{\alpha} F\|_{L_p(Q_2^+,d\mu_1)}+N\|x_d^{\alpha} u\|_{L_p(Q_2^+,d\mu_1)}
+N \sqrt\lambda\| x_d^{\alpha}f\|_{L_{p^*}(Q_2^+,d\mu_1)}\notag\\
&\quad +N  \sqrt \lambda (\lambda +1)\|x_d^{\alpha} u\|_{L_{p^*}(Q_2^+,d\mu_1)}
+N \sqrt\lambda \|x_d^{\alpha} Du\|_{L_{p^*}(Q_2^+,d\mu_1)},
\end{align}
where $N$ is independent of $\lambda$. Observe that by the assumptions in the corollary, \eqref{0831.emb}, $p \leq p_1$, and $p^* \leq p_0$, all the terms on the right-hand side of \eqref{eq2.57} are finite. Note also that from  H\"{o}lder's inequality, it follows that
\begin{align*}
& \|u\|_{L_1(Q_2^+)} =\|x_d^\alpha u\|_{L_{1}(Q_1^+, d\mu_1)} \leq N(p_0, d) \|x_d^\alpha u\|_{L_{p_0}(Q_1^+, d\mu_1)} <\infty, \\
& \|Du\|_{L_1(Q_2^+)} =\|x_d^\alpha Du\|_{L_{1}(Q_1^+, d\mu_1)} \leq N(p_0, d) \|x_d^\alpha Du\|_{L_{p_0}(Q_1^+, d\mu_1)} <\infty.
\end{align*}
Therefore, as $p^*<p$, we conclude \eqref{main-thm-estb} from \eqref{eq2.57} by using H\"older's inequality and a standard iteration argument with the underlying measure $d\mu_1$ and for a sufficiently large $\lambda$.  See, for example, \cite[pp. 80--82]{Giaq}. The corollary is proved when $p \leq p_1$.
\\
\noindent
{\bf Case II}: $p > p_1$. By applying the result in {\bf Step I}, we obtain  \eqref{main-thm-estb} with $p_1$ in place of $p$.  From this, we can use the argument in {\bf Step I} again with $p_1$ in place of $p_0$. After iterating the argument for a finite number of steps, we  obtain \eqref{eq2.57} for general $p$.
\end{proof}

\subsection{Proof of Theorem \ref{thm1.4}}

In order to prove Theorem \ref{thm1.4}, we need the following higher regularity estimates of weak solutions to the homogeneous equation \eqref{eq11.52}-\eqref{eq12.01}. Recall that the H\"{o}lder semi-norm $C^{1/2, 1}$ is defined as
\[
[f]_{C^{1/2, 1}(Q_{1/2}^+)} = \sup_{\substack{(t,x), (s,y) \in Q_{1/2}^+ \\ (t,x) \not= (s,y)}}  \frac{|f(t,x) - f(s, y)|}{|t-s|^{1/2} + |x-y|}.
\]

\begin{corollary}[Higher regularity]
                        \label{cor5.2}
Under the assumptions of Proposition \ref{lem1}, for any $q\in [1,2]$ we have
\begin{equation}
                            \label{eq11.27}
[x_d^{\alpha}u]_{C^{1/2,1}(Q_{1/2}^+)}
\le N \Big(\fint_{Q^+_{1}}|\tilde{x}_d^{\alpha} u (\tilde{z})|^q \, \mu_1(d\tilde{z})\Big)^{1/q},
\end{equation}
\begin{equation}
                            \label{eq11.28}
[x_d^\alpha D_{x'}u]_{C^{1/2,1}(Q_{1/2}^+)}\le N \Big(\fint_{Q^+_{1}}|\tilde{x}_d^\alpha D_{x'}u(\tilde{z})|^q\, \mu_1(d\tilde{z})\Big)^{1/q},
\end{equation}
and
\begin{equation}
                            \label{eq11.29}
[x_d^\alpha \cU]_{C^{1/2,1}(Q_{1/2}^+)}\le N \Big(\fint_{Q^+_{1}}\big(|\tilde{x}_d^\alpha Du(\tilde{z})|  + \lambda^{1/2} |\tilde{x}_d^\alpha u(\tilde{z})| \big)^q\, \mu_1(d\tilde{z})\Big)^{1/q},
\end{equation}
where $\cU$ is defined in \eqref{eq11.31} and $N=N(d,\alpha,\kappa,q)>0$.
\end{corollary}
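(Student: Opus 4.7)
The plan is to upgrade the pointwise bounds in Propositions \ref{interior-Linf} and \ref{lem1} to parabolic $C^{1/2,1}$-Hölder bounds by differentiating the equation in the tangential directions, and then to reduce the $L^2$ averages on the right-hand side to $L^q$ averages by a standard interpolation/iteration argument.

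Since the coefficients $\overline{a}_{ij}$, $\overline{a}_0$, $\overline{c}_0$ depend only on $x_d$, the equation \eqref{eq11.52}--\eqref{eq12.01} is invariant under translation in $z'=(t,x')$. By the difference-quotient method already used to derive \eqref{inter-eq3.50}, every tangential derivative $D_{x'}^k\partial_t^j u$ is also a weak solution of \eqref{eq11.52}--\eqref{eq12.01} on slightly smaller cubes. Hence Proposition \ref{lem1} may be applied not only to $u$ but also to $u_t$, $D_{x'}u$, and $D_{x'}^2 u$, giving pointwise $L^\infty$ bounds on $x_d^\alpha u_t$, $x_d^\alpha D_{x'}u$, $x_d^\alpha DD_{x'}u$, and $x_d^\alpha D D_{x'}^2 u$ in $Q_{1/2}^+$ by the corresponding $L^2$ averages on $Q_1^+$.

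Now I would write down the first-order $z$-derivatives of the three functions in question. For $x_d^\alpha u$,
\[
D_{x'}(x_d^\alpha u)=x_d^\alpha D_{x'}u,\quad \partial_t(x_d^\alpha u)=x_d^\alpha u_t,
\]
while
\[
D_d(x_d^\alpha u)=\alpha x_d^{\alpha-1}u+x_d^\alpha D_d u,
\]
which is bounded using the sharp pointwise bounds $|u|\le N x_d^{1-\alpha}$ and $|D_d u|\le N x_d^{-\alpha}$ from \eqref{eq11.00} and \eqref{eq10.56b}. The same scheme handles $x_d^\alpha D_{x'}u$, applying the previous reasoning to $D_{x'}u$ in place of $u$. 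For $x_d^\alpha\cU$ with $\cU=\overline{a}_{dj}D_j u$, the tangential derivatives are again controlled directly (Proposition \ref{lem1} applied to $D_{x'}u$ and to $u_t$), whereas the $x_d$-derivative is rewritten using the equation:
\[
D_d(x_d^\alpha\cU)=x_d^\alpha \overline{a}_0 u_t+\lambda x_d^\alpha \overline{c}_0 u-\sum_{i=1}^{d-1} x_d^\alpha \overline{a}_{ij}D_i D_j u,
\]
which is bounded by the same tangential-derivative estimates. Since bounded first derivatives in $x$ give the Lipschitz (i.e.\ $C^{0,1}$) part in space, and a bounded $\partial_t$ gives the $C^{1/2}$ part in time (via $|t-s|\le|t-s|^{1/2}$ on the bounded time interval of $Q_{1/2}^+$), this yields the $C^{1/2,1}$ semi-norm bounds \eqref{eq11.27}--\eqref{eq11.29} with the $L^2$-average of $x_d^\alpha u$ (respectively $x_d^\alpha D_{x'}u$, respectively $x_d^\alpha|Du|+\lambda^{1/2}x_d^\alpha|u|$) on the right-hand side, corresponding to the case $q=2$.

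It remains to upgrade $L^2$ to $L^q$ on the right-hand side for $q\in[1,2)$. I would run a standard iteration on the radius: for $1/2\le r<R\le 1$, apply the $L^2$-based bounds just obtained on $Q_r^+$ after the natural rescaling, then interpolate
\[
\|x_d^\alpha u\|_{L^2(Q_R^+,d\mu_1)}\le\|x_d^\alpha u\|_{L^\infty(Q_R^+)}^{1-q/2}\|x_d^\alpha u\|_{L^q(Q_R^+,d\mu_1)}^{q/2},
\]
and absorb the $L^\infty$ factor via Young's inequality into $\tfrac12\|x_d^\alpha u\|_{L^\infty(Q_R^+)}$ plus a constant times $(R-r)^{-\gamma}\|x_d^\alpha u\|_{L^q(Q_1^+,d\mu_1)}$. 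A standard iteration lemma then gives the desired $L^q$-based bound, and the same argument handles $x_d^\alpha D_{x'}u$ and $x_d^\alpha\cU$. The main obstacle is bookkeeping: verifying that the constants in the $L^\infty$-from-$L^2$ estimate scale correctly under dilations adapted to the weighted measure $\mu_1$, so that the iteration closes uniformly in $q\in[1,2]$. All the analytic tools for this bookkeeping are already present in the proof of Proposition \ref{lem1}.
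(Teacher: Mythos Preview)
Your proposal is correct and follows essentially the same route as the paper: establish the $q=2$ case by bounding pointwise the $\partial_t$, $D_{x'}$, and $D_d$ derivatives of each of $x_d^\alpha u$, $x_d^\alpha D_{x'}u$, and $x_d^\alpha\cU$ (using tangential invariance of the equation for the first two, and the equation itself for $D_d(x_d^\alpha\cU)$), then pass from $L^2$ to $L^q$ averages by the standard Giaquinta-type iteration. The only cosmetic difference is that the paper cites \eqref{eq5.23}--\eqref{eq5.24} directly (together with Caccioppoli \eqref{eq11.58} to absorb $\|Du\|+\sqrt\lambda\|u\|$ into $\|u\|$), whereas you invoke the intermediate inequalities \eqref{eq11.00} and \eqref{eq10.56b}; these are equivalent after one application of Caccioppoli, which you should make explicit when tracking the right-hand side of \eqref{eq11.27}.
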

\begin{proof}
We first consider the case when $q=2$.
Since $u_t$ satisfies the same equation as $u$, from \eqref{eq5.23}, \eqref{inter-eq3.50}, and \eqref{eq11.58}, we have for any $z\in Q_{1/2}^+$,
\begin{align}
                    \label{eq12.48}
&|\partial_t(x_d^\alpha u)|=|x_d^\alpha u_t|\le N \Big(\fint_{Q^+_{3/4}} |\tilde{x}_d^\alpha u_t(\tilde{z})|^2 \, \mu_1(d\tilde{z})\Big)^{1/2}\notag\\
& \le N\Big(\fint_{Q^+_{7/8}}\big(|\tilde{x}_d^\alpha Du(\tilde{z})|^2  + \lambda |\tilde{x}_d^\alpha u(\tilde{z})|^2 \big) \, \mu_1(d\tilde{z})\Big)^{1/2}\\
& \le N\Big(\fint_{Q^+_{1}}|\tilde{x}_d^{\alpha} u (\tilde{z})|^2 \, \mu_1(d\tilde{z})\Big)^{1/2}\notag.
\end{align}
Next, it follows from  \eqref{eq5.23}, \eqref{eq5.24}, and \eqref{eq11.58} that  for any $z\in Q_{1/2}^+$,
\begin{align*}
&|D_{x'}(x_d^\alpha u)|=|x_d^\alpha D_{x'}u|\le N \Big(\fint_{Q^+_{3/4}}\big(|\tilde{x}_d^\alpha Du(\tilde{z})|^2  + \lambda |\tilde{x}_d^\alpha u(\tilde{z})|^2 \big)\, \mu_1(d\tilde{z})\Big)^{1/2}\\
& \le N\Big(\fint_{Q^+_{1}}|\tilde{x}_d^{\alpha} u (\tilde{z})|^2 \, \mu_1(d\tilde{z})\Big)^{1/2}
\end{align*}
and similarly
\begin{align*}
|D_{d}(x_d^\alpha u)|\le |x_d^\alpha D_{d}u|+|\alpha x_d^{\alpha-1} u|
 \le N\Big(\fint_{Q^+_{1}}|\tilde{x}_d^{\alpha} u (\tilde{z})|^2 \, \mu_1(d\tilde{z})\Big)^{1/2}.
\end{align*}
Combining the estimates above, we obtain \eqref{eq11.27}. Because $D_{x'}u$ satisfies the same equation as $u$, we get \eqref{eq11.28} immediately from \eqref{eq11.27}.

Next we prove \eqref{eq11.29}. Since $u_t$ and $D_{x'}u$ satisfy the same equation as $u$, by using \eqref{eq5.24} and \eqref{inter-eq3.50}, we get for any $z\in Q_{1/2}^+$,
\begin{align*}
&|\partial_t(x_d^\alpha \cU)|=|x_d^\alpha  \overline{a}_{dj}(x_d)D_j u_t|\\
&\le N \Big(\fint_{Q^+_{3/4}} \big(|\tilde{x}_d^\alpha Du_t(\tilde{z})|^2  + \lambda |\tilde{x}_d^\alpha u_t(\tilde{z})|^2 \big)\, \mu_1(d\tilde{z})\Big)^{1/2}\\
& \le N\Big(\fint_{Q^+_{1}}\big(|\tilde{x}_d^\alpha Du(\tilde{z})|^2  + \lambda |\tilde{x}_d^\alpha u(\tilde{z})|^2 \big) \, \mu_1(d\tilde{z})\Big)^{1/2}
\end{align*}
and
\begin{align*}
&|D_{x'}(x_d^\alpha \cU)|=|x_d^\alpha  \overline{a}_{dj}(x_d)D_j D_{x'}u|\\
&\le N \Big(\fint_{Q^+_{3/4}} \big(|\tilde{x}_d^\alpha DD_{x'}u(\tilde{z})|^2  + \lambda |\tilde{x}_d^\alpha D_{x'}u(\tilde{z})|^2 \big)\, \mu_1(d\tilde{z})\Big)^{1/2}\\
& \le N\Big(\fint_{Q^+_{1}}\big(|\tilde{x}_d^\alpha Du(\tilde{z})|^2  + \lambda |\tilde{x}_d^\alpha u(\tilde{z})|^2 \big) \, \mu_1(d\tilde{z})\Big)^{1/2}.
\end{align*}
By using \eqref{eq12.46}, \eqref{eq12.48}, \eqref{eq11.28}, and \eqref{eq5.23}, we obtain
\begin{align*}
&|D_{d}(x_d^\alpha \cU)|\le Nx_d^\alpha(|u_t|+|DD_{x'}u|+\lambda|u|)\\
& \le N\Big(\fint_{Q^+_{1}}\big(|\tilde{x}_d^\alpha Du(\tilde{z})|^2  + \lambda |\tilde{x}_d^\alpha u(\tilde{z})|^2 \big) \, \mu_1(d\tilde{z})\Big)^{1/2}.
\end{align*}
Combining the above three estimates, we reach \eqref{eq11.29}.

Finally, when $q \in [1,2)$, we use the result for $q=2$, Proposition \ref{lem1}, and a standard iteration. See, for example, \cite[pp. 80--82]{Giaq}.
The corollary is proved.
\end{proof}

\begin{proof}[Proof of Theorem \ref{thm1.4}]   From Corollary \ref{cor5.2}, we can apply method of mean oscillation estimates introduced in \cite{Krylov}. As this is similar to that of \cite[Theorem 2.4]{DP20}, we skip the details and only outline some important steps in Appendix \ref{pr-thm1.4}.
\end{proof}
\appendix
\section{Proof of Theorem \ref{thm3}}  \label{proof-thm4.6}
\begin{proof}
We first prove Assertion {\bf (i)}. Let $u \in \sH_{q}^1(\Omega_T, x_d^{\alpha q}d\mu_{1})$ be a weak solution of  \eqref{eq11.52a}-\eqref{eq12.01a}.  By H\"{o}lder's inequality and as $\alpha <1$ and $q \geq 2$, we have $u \in \sH_{2, \textup{loc}}^1(\Omega_T, d\mu)$. Then, from Proposition \ref{Simple-approx}, it follows that for every $z_0 \in \overline{\Omega}_T$ and $r >0$, we have the decomposition
\[
u =v+ w \quad \text{in} \  Q_{10r}^+(z_0),
\]
where $v$ and $w$ satisfy \eqref{0506-tU-est} and \eqref{0506-W.est}. From this, we obtain \eqref{thm3-est} by using  the real variable argument. See, for instance, \cite{DK11b} and \cite[Lemma A.20]{DK19}. As this is by now standard, we skip the details.

Next, we prove Assertion {\bf (ii)}. We split the proof into two cases when $p \in (2,\infty)$ and when $p \in (1,2)$.

\noindent {\bf Case I}: $p \in (2,\infty)$.  We only need to prove the existence of the solution, as the uniqueness  follows from \eqref{thm3-est} in {\bf(i)}. For $k=1,2,\ldots$, let $F^{(k)}=F(z) \chi_{\widehat Q_k}(z)$, where  $\widehat{Q}_k(z)$ is defined in \eqref{eq3.39}. It is clear that $x_d^\alpha F^{(k)}\in L_2(\Omega_T,d\mu_1)^{d}\cap L_p(\Omega_T,d\mu_1)^{d}$ by H\"older's inequality and moreover $x_d^{\alpha}F^{(k)}\to x_d^{\alpha}F$ in $L_p(\Omega_T, d\mu_1)$ as $k\to \infty$ by the dominated convergence theorem. Similarly, we find $\{x_d^\alpha f^{(k)}\}\subset L_2(\Omega_T,d\mu_1)\cap L_p(\Omega_T,d\mu_1)$.  Now, let $u^{(k)} {\in \sH_2^1(\Omega_T, d\mu)}$ be the weak solution of the equation \eqref{eq11.52a}-\eqref{eq12.01a} with $F^{(k)}$ and $ f^{(k)}$ in place of $F$ and $f$, respectively. The existence of $u^{(k)}$ follows from Lemma \ref{L-2-lemma}. Also, from Assertion {\bf(i)}, we have $u^{(k)}\in \sH^{1}_p(\Omega_T, x_d^{\alpha p} d\mu_1)$. By the strong convergence of $\{x_d^{\alpha}F^{(k)}\}$ and $\{x_d^{\alpha} f^{(k)}\}$ in $L_{p}(\Omega_T,d\mu_1)$, we infer that
$\{u^{(k)}\}$ is a Cauchy sequence in $\sH^{1}_{p}(\Omega_T, x_d^{\alpha p} d\mu_1)$. Let $u\in \sH^{1}_{p}(\Omega_T, x_d^{\alpha p}d\mu_1)$ be its limit. Then, by passing to the limit in the weak formulation of solutions, we see that $u$ is a solution to the equation \eqref{eq11.52a}-\eqref{eq12.01a}.

\noindent
{\bf Case II}: $p \in (1, 2)$. We use the method of duality. Although similar ideas are used in \cite[Theorem 4.1]{DP20},  the proof contains different details, which we give here. We first prove the estimate \eqref{thm3-est}. Let $q = {p}/(p-1) \in (2,\infty)$ and let $G:\Omega_T \rightarrow \mathbb{R}^d$ and  $g: \Omega_T \rightarrow \mathbb{R}$ be measurable functions such that  $|G| +|g| \in L_q(\Omega_T, d\mu_1)$. We consider the adjoint problem in $\bR \times \bR^d_+$
\begin{equation} \label{adj-eqn-sim}
 x_d^\alpha(- \bar a_0 v_t + \lambda \bar c_0 v) - D_i\big(x_d^\alpha(\overline{a}_{ji}(x_d) D_{j} v - \tilde{G}_i)\big)   =  \lambda^{1/2} x_d^\alpha \tilde{g}
\end{equation}
in $\bR \times \bR_+^d$ with the boundary condition
\begin{equation}  \label{v-0721-eqn}
v =0 \quad \text{on} \ \bR \times \partial \bR^d_+,
\end{equation}
where
\[
\tilde{G}(z) = x_d^{-\alpha}G(z) \chi_{(-\infty, T)}(t), \quad \tilde{g}(z) =   x_d^{-\alpha} g(z) \chi_{(-\infty, T)}(t).
\]
Observe that
\[
\begin{split}
&\|x_d^\alpha \tilde{G}\|_{L_q(\bR \times \bR^{d}_+, d\mu_1)} = \|G\|_{L_q(\Omega_{T}, d\mu_1)} \quad \text{and} \\
& \|x_d^\alpha \tilde{g}\|_{L_q(\bR \times \bR^{d}_+, d\mu_1)} = \|g\|_{L_q(\Omega_{T}, d\mu_1)}.
\end{split}
\]
By {\bf Case I}, there is a unique solution $v \in \sH^1_q(\bR \times \bR_+^d,  x_d^{\alpha q}d\mu_1)$ to \eqref{adj-eqn-sim}-\eqref{v-0721-eqn}, which satisfies
\begin{equation}
                            \label{eq10.35}
                            \begin{split}
& \int_{\bR \times \bR^d_+} \big(|x_d^{\alpha}Dv|^q + \lambda^{q/2} |x_d^{\alpha} v|^q \big) \,\mu_1(dz) \\
& \leq N\int_{\Omega_T} \big(|G|^q + |g|^q \big) \,\mu_1(dz).
\end{split}
\end{equation}
Also, by the uniqueness of solutions, we have $v =0$ for $t \geq T$. Then,  by testing \eqref{eq11.52a} with $v$, and testing \eqref{adj-eqn-sim} with $u$, and by using the definitions of $\tilde{G}$ and $\tilde{g}$, we obtain
\[
\begin{split}
& \int_{\Omega_T}\big[ G \cdot (x_d^{\alpha}  D u)
+ \lambda^{1/2}  g (x_d^{\alpha} u) \big]\,\mu_1(dz) \\
& = \int_{\Omega_T}\big[(x_d^{\alpha} F)\cdot (x_d^{\alpha} D v) + \lambda^{1/2} (x_d^{\alpha}f) (x_d^{\alpha}v) \big]\,\mu_1(dz).
\end{split}
\]
Now,  it follows from H\"older's inequality and \eqref{eq10.35} that
\[
\begin{split}
& \left|\int_{\Omega_T}\big(G\cdot (x_d^{\alpha} D u)
+ \lambda^{1/2} g (x_d^\alpha u) \big)\,\mu_1(dz)\right| \\
& \leq  \|x_d^{\alpha} F\|_{L_p(\Omega,d\mu_1)} \|x_d^{\alpha}  D v\|_{L_q(\Omega_T, d\mu_1)} + \lambda^{1/2} \|x_d^{\alpha} f\|_{L_{p}(\Omega_T, d\mu_1)} \|x_d^{\alpha}v\|_{L_q(\Omega_T, d\mu_1)}\\
& \leq N\Big(\|x_d^{\alpha} F\|_{L_p(\Omega,d\mu_1)} + \|x_d^{\alpha} f\|_{L_{p}(\Omega_T, d\mu_1)}\Big)
\Big( \|G\|_{L_q(\Omega_T, d\mu_1)} + \|g\|_{L_q(\Omega_T, d\mu_1)} \Big).
\end{split}
\]
From the last estimate and as $G$ and $g$ are arbitrary, we obtain \eqref{thm3-est}.

It now remains to prove the existence of solution $u \in \sH_p^1(\Omega_T, x_d^{\alpha p} d\mu_1)$.  For $i=1,2,\ldots, d$ and $k=1,2,\ldots$, let
$$
F^{(k)}_i (z) =x_d^{-\alpha}\max\big \{-k,\min\{k, x_d^{\alpha}F_i(z) \}\big \}\chi_{\widehat Q_k}(z),
$$
where $\widehat Q_{r}$ is defined in \eqref{eq3.39}. Then $x_d^{\alpha}F^{(k)}\in L_2(\Omega_T,d\mu_1)^{d}\cap L_p(\Omega_T, d\mu_1)^{d}$ and by the dominated convergence theorem, $x_d^{\alpha}F^{(k)}\to x_d^{\alpha}F$ in $L_p(\Omega_T,d\mu_1)$ as $k\to \infty$. Similarly, we find $\{x_d^{\alpha}f^{(k)}\}\subset L_2(\Omega_T,d\mu_1)\cap L_p(\Omega_T,d\mu_1)$. By Lemma \ref{L-2-lemma}, there is a unique weak solution $u^{(k)}\in \cH^{1}_{2}(\Omega_T,d\mu)$ to the equation \eqref{eq11.52a}-\eqref{eq12.01a} with $F^{(k)}$ and $f^{(k)}$ in place of $F$ and $f$, respectively.

As in {\bf Case I},  we only need to prove that  $u^{(k)} \in \sH_p^1(\Omega_T,  x_d^{\alpha p}d\mu_1)$.  However, the proof of this is more involved because we cannot apply Assertion (i) as before. We adapt the idea in \cite[Section 8]{MR3812104} by using a localization and partition argument.  Let us fix a $k \in \mathbb{N}$. As ${\mu_1}$ is a doubling measure, there exists $N_0 = N_0(\alpha, d)>0$  such that
 \begin{equation}  \label{eq7.36}
{\mu}_1(\widehat Q_{2r})\le N_0{\mu}_1(\widehat Q_{r}), \quad \forall \ r >0.
\end{equation}
Since $u^{(k)}\in \sH^{1}_{2}(\Omega_T,d\mu)$ and $p \in (1,2)$, we apply H\"older's inequality and see that
\begin{equation} \label{eq7.38}
 \|x_d^{\alpha}u^{(k)}\|_{L_p(\widehat Q_{2k},d\mu_1)}+\|x_d^{\alpha} Du^{(k)}\|_{L_p(\widehat Q_{2 k} ,d\mu_1)}<\infty.
\end{equation}
Hence, we only need to prove that
$$
\|x_d^{\alpha}u^{(k)}|\|_{L_p(\Omega_T\setminus \widehat{Q}_{2k}, d\mu_1)} +  \|x_d^{\alpha}Du^{(k)}|\|_{L_p(\Omega_T\setminus \widehat{Q}_{2k}, d\mu_1)} < \infty.
$$
For each $l  \geq 0$,  let $\eta_{l}$ be a smooth function such that
\begin{align*}
\eta_l&\equiv 0\quad \text{in}\ \widehat Q_{2^lk},  \quad \eta_l \equiv 1 \quad \text{outside}\,\, \widehat Q_{2^{l+1}k},
\end{align*}
and $|D \eta_l|\le C_0 2^{-l}$, $|(\eta_l)_t|\le C_0 2^{-2l}$, where $C_0$ is independent of $l$. Let us also denote $w^{(k,l)}=u^{(k)}\eta_l$. We see that $w^{(k,l)}\in \sH^{1}_{2}(\Omega_T,d\mu)$ is a weak solution of
\begin{equation*}
x_d^\alpha \big( \overline{a}_0 w^{(k,l)}_t  +\lambda  \overline{c}_0 w^{(k,l)}\big) - D_i\big(x_d^\alpha (\overline{a}_{ij} D_j w^{(k,l)} - F^{(k,l)}_i)\big)   =  \lambda^{1/2} x_d^\alpha f^{(k,l)}
\end{equation*}
in $\Omega_T$ with the boundary condition $w^{(k,l)} = 0$ on $(-\infty, T) \times\{x_d =0\}$, where
\[
\begin{split}
F^{(k,l)}_i & =  u^{(k)}\overline{a}_{ij}D_j \eta_l, \quad i = 1, 2,\ldots, d,\\
 f^{(k,l)} & = \lambda^{-1/2}\big(\overline{a}_0u^{(k)}(\eta_l)_t - \overline{a}_{ij} D_j u^{(k)} D_i\eta_l \big).
\end{split}
\]
Here we used $\eta_l F_i^{(k)} \equiv \eta_l f^{(k)}  \equiv F_i^{(k)} D_i\eta_l \equiv 0$ for every $i = 1, 2,\ldots, d$ and $l =0,1,\ldots$.

Now, applying the estimate \eqref{L2-lemma-est} to the equation of $w^{(k,l)}$, we have
\begin{align*}
& \big\|x_d^{\alpha}(|Dw^{(k,l)}|+\lambda^{1/2}|w^{(k,l)}|) \big\|_{L_2(\Omega_T,d\mu_1)}\\
& \le N \|x_d^{\alpha} F^{(k,j)}\|_{L_2(\Omega_T,d\mu_1)}+N
\|x_d^{\alpha} f^{(k,l)}\|_{L_2(\Omega_T,d\mu_1)}.
\end{align*}
This implies that
\begin{align*}
&\big\|x_d^{\alpha} (|Du^{(k)}|+\lambda^{1/2}|u^{(k)}|)\big\|_{L_2(\widehat Q_{2^{l+2}k}\setminus \widehat Q_{2^{l+1}k},d\mu_1)}\\
&\le
N\Big( 2^{-l}\|x_d^{\alpha} u^{(k)}\|_{L_2(\widehat Q_{2^{l+1}k}\setminus \widehat Q_{2^{l}k},d\mu_1)}+ \lambda^{-1/2}2^{-2l}
\|x_d^{\alpha} u^{(k)}\|_{L_2(\widehat Q_{2^{l+1}k}\setminus \widehat Q_{2^{l}k},d\mu_1)}\\
&\quad + \lambda^{-1/2}2^{-l}\|x_d^{\alpha} Du^{(k)}\|_{L_2(\widehat Q_{2^{l+1}k}\setminus \widehat Q_{2^{l}k},d\mu_1)} \Big)\\
&\le C2^{-l}\big\|x_d^{\alpha} (|Du^{(k)}|+\lambda^{1/2}|u^{(k)}|)\big\|_{L_2(\widehat Q_{2^{l+1}k}\setminus \widehat Q_{2^{l}k},d\mu_1)}
\end{align*}
for every $l \geq 1$, where $C$ also depends on $\lambda$, but is independent of $l$. Then, by iterating this estimate, we obtain
\begin{align}  \label{eq9.01}
  &\big\|x_d^{\alpha} (|Du^{(k)}|+\lambda^{1/2}|u^{(k)}|)\big\|_{L_2(\widehat Q_{2^{l+1}k}\setminus \widehat Q_{2^{l}k},d\mu_1)}\nonumber\\
&\le C^l2^{-\frac{l(l-1)}2}\big\|x_d^{\alpha}(| Du^{(k)}|+ \lambda^{1/2}|u^{(k)}|)\big\|_{L_2(\widehat Q_{2k},d\mu_1)}.
\end{align}
As $p \in (1, 2)$, we apply H\"older's inequality, \eqref{eq7.36}, and \eqref{eq9.01} to obtain
\begin{align*}
&\big\|x_d^{\alpha} (|Du^{(k)}|+\lambda^{1/2}|u^{(k)}|)\big\|_{L_p(\widehat Q_{2^{l+1}k}\setminus \widehat Q_{2^{l}k},d\mu_1)}\\
&\le ({\mu_1}(\widehat Q_{2^{l+1}k}))^{\frac 1 p-\frac 1 2}\big\|x_d^{\alpha} (|Du^{(k)}|+\lambda^{1/2}|u^{(k)}|)\big\|_{L_2(\widehat Q_{2^{l+1}k}\setminus \widehat Q_{2^{l}k},d\mu_1)}\\
&\le (N_0^l{\mu_1}(\widehat Q_{2k}))^{\frac{1}{p}-\frac 1 2}C^l2^{-\frac {l(l-1)} 2}\big\|x_d^{\alpha}(| Du^{(k)}|+\lambda^{1/2}|u^{(k)}|)\big\|_{L_2(\widehat Q_{2k},d\mu_1)}.
\end{align*}
Then, it follows that
\[
\begin{split}
&\big\|x_d^{\alpha} (|Du^{(k)}|+\lambda^{1/2}|u^{(k)}|)\big\|_{L_p(\Omega_T \setminus \widehat Q_{2k},d\mu_1)}
\\
&\le \sum_{l=1}^\infty \big\|x_d^{\alpha} (|Du^{(k)}|+\lambda^{1/2} |u^{(k)}|)\big\|_{L_p(\widehat Q_{2^{l+1}k}\setminus \widehat Q_{2^{l}k},d\mu_1)}\\
& \leq N \big\|x_d^{\alpha} (|Du^{(k)}|+\lambda^{1/2}|u^{(k)}|)\big\|_{L_2(\widehat Q_{2k},d\mu_1)} < \infty.
\end{split}
\]
From this estimate and \eqref{eq7.38}, we infer that $u^{(k)} \in \sH_p^1(\Omega_T, x_{d}^{\alpha p}d\mu_1)$. The theorem is proved.
\end{proof}
\section{Sketch of the proof of Theorem \ref{thm1.4}} \label{pr-thm1.4}
\begin{proof} It follows from Corollaries \ref{main-thrm} and \ref{cor5.2} as well as the corresponding interior estimate that for any $q_0\in (1,2)$, if $v \in {\sH}^1_{q_0}(Q^+_r(z_0),  x_d^{\alpha q_0}d\mu_1)$ is a weak solution of \eqref{eq11.52}-\eqref{eq12.01}, then we have
\begin{equation}
                                        \label{eq5.52b}
\begin{split}
& [x_d^\alpha D_{x'}v]_{C^{1/2,1}(Q_{r/2}^+(z_0))}
+[x_d^\alpha\mathcal{V}]_{C^{1/2,1}(Q_{r/2}^+(z_0))}
+\sqrt \lambda [x_d^\alpha v]_{C^{1/2,1}(Q_{r/2}^+(z_0))} \\
& \le Nr^{-1}\Big(\fint_{Q^+_{r}(z_0)}|x_d^\alpha Dv|^{q_0}
+\lambda^{q_0/2}|x_d^\alpha v|^{q_0}\,\mu_1(dz)\Big)^{1/q_0},
\end{split}
\end{equation}
where $\mathcal{V} =\overline{a}_{dj}(x_d) D_jv$. By using \eqref{eq5.52b}, Theorem \ref{thm2}, and a decomposition argument as in the proof of Proposition  \ref{G-approx-propos}, we have the following the mean oscillation estimate:  if $\textup{spt}(u) \subset  (s -( R_0r_0)^2, s+ (R_0r_0)^2) \times \bR^{d}_+$ for some $s\in \bR$, then for any $\tau\le 1/30$ and $z_0\in \overline{\Omega_T}$,
\begin{align*}
&\fint_{Q_{\tau r}^+(z_0)}\big[|x_d^\alpha D_{x'}u-(x_d^\alpha D_{x'}u)_{Q_{\tau r}^+(z_0)}|\\
&\quad +|x_d^\alpha \cU-(x_d^\alpha \cU)_{Q_{\tau r}^+(z_0)}|
+\sqrt\lambda|x_d^\alpha u-(x_d^\alpha u)_{Q_{\tau r}^+(z_0)}|\big]\,{\mu_1}(dz)\\
&\leq  N  \tau^{-(d+2+\alpha_-)}r_0^{2(1-\frac 1 {q_0})}
\Big(\fint_{Q_{r}^+(z_0)}|x_d^\alpha Du|^{q_0} \,{\mu_1}(dz)\Big)^{\frac 1 {q_0}}\\
&\ +N  \tau^{-\frac {d+2+\alpha_-} {q_0}}\Big(\fint_{Q_{r}^+(z_0)}( |x_d^\alpha F|^{q_0} +  |x_d^\alpha f|^{q_0}) \,{\mu_1}(dz)\Big)^{\frac 1 {q_0}}\\
&\ +N \tau \Big(\fint_{Q^+_{r}(z_0)}|x_d^\alpha Du|^{q_0}
+\lambda^{q_0/2}|x_d^\alpha u|^{q_0}\,\mu_1(dz)\Big)^{\frac 1{q_0}}\\
&\ +N \tau^{-\frac {d+2+\alpha_-} {q_0}}\delta_0^{\frac 1 {q_0\nu_1}} \Big(\fint_{Q^+_{r}(z_0)}|x_d^\alpha Du|^{q_0\nu_2}\,\mu_1(dz)\Big)^{\frac 1 {q_0\nu_2}},
\end{align*}
where $\nu_1\in (1,\infty)$, $\nu_2=\nu_1/(\nu_1-1)$, and $\cU = a_{dj} D_j u$.
The a priori estimate \eqref{main-thm-estc} then follows from the mean oscillation estimate, the reverse H\"older's inequality for $A_p$ weights, the weighted and mixed-norm Fefferman--Stein type theorems on sharp functions, and the weighted and mixed-norm Hardy--Littlewood maximal function theorem. See, for instance, Corollary 2.6, 2.7, and  Section 7 of \cite{MR3812104} for details. The solvability in weighted and mixed-norm Sobolev spaces then follows from the estimate \eqref{main-thm-estc} and an approximation argument by using the solvability result in Theorem \ref{thm2}. We omit the details and refer the reader to \cite[Section 8]{MR3812104}.
\end{proof}
\section{Proofs of Remark \ref{rem2.7} and Corollary \ref{ell-local-th}} \label{App-C}
For completeness, we provide the proofs of Remark \ref{rem2.7} and Corollary \ref{ell-local-th}. We need the following embedding result that is slightly more general than Lemma \ref{lem2.2}.
\begin{lemma}
        \label{imbedding-two}
Let $\alpha, \tilde{\alpha}\in (-\infty, 1)$ and $q \in (1, \infty)$ be fixed numbers such that $\tilde{\alpha} -1>q(\alpha -1)$.  Let $q^*\in (1,\infty)$ satisfy
\begin{equation}
                        \label{0828.alpha}
\left\{
\begin{aligned}
\frac{1}{q} \le  \frac{1}{d+2+ \tilde{\alpha}_-}+ \frac{1}{q^*} & \quad \text{if} \quad d \geq 2\\
 \ \frac{1}{q}  \le  \ \frac{1}{4+ \tilde{\alpha}_-}+ \frac{1}{q^*} & \quad \text{if} \quad d =1.
\end{aligned} \right.
\end{equation}
Then for any $u\in \sH^1_q(Q_2^+, x_{d}^{\alpha q}d\tilde{\mu})$, we have
\begin{equation}
                    \label{eq0828}
\|u\|_{L_{q^*}(Q_2^+, x_{d}^{\alpha q^*} d\tilde{\mu})}\le N\|u\|_{\sH^1_{q}(Q_2^+, x_{d}^{\alpha q}d\tilde{\mu})},
\end{equation}
where $N=N(d,\alpha, \tilde{\alpha},q,q^*)>0$ is a constant, $\tilde{\alpha}_- =\max\{-\tilde{\alpha}, 0\}$, and $\tilde{\mu}(dz) = x_d^{-\tilde{\alpha}} dx dt$. The result still holds when $q^*=\infty$ and the inequalities in \eqref{0828.alpha} are strict.
\end{lemma}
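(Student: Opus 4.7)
The plan is to mimic the argument of Lemma \ref{lem2.2}, but with the role of the underlying measure $d\mu_1=x_d^{-\alpha}dz$ replaced by $d\tilde\mu=x_d^{-\tilde\alpha}dz$. The compatibility condition $\tilde\alpha-1>q(\alpha-1)$ is exactly what makes the necessary Hardy inequality applicable, which is the only place the two measures must be reconciled.

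First I would introduce the substitution $w(t,x)=x_d^{\alpha}u(t,x)$ and observe that
\[
\|w\|_{L_q(Q_2^+,d\tilde\mu)}^q=\int_{Q_2^+}|u|^q x_d^{\alpha q-\tilde\alpha}\,dz=\|u\|_{L_q(Q_2^+,x_d^{\alpha q}d\tilde\mu)}^q,
\]
and similarly for the $L_{q^*}$ norm on the left-hand side of \eqref{eq0828}. The derivative of $w$ is
\[
D_iw=x_d^\alpha D_iu+\alpha\delta_{id}x_d^{\alpha-1}u,
\]
so only the component $D_dw$ picks up an extra term of the form $\alpha x_d^{\alpha-1}u$.

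Next I would apply Lemma \ref{lem3.1} (Hardy's inequality) with the exponent parameter $\alpha q-\tilde\alpha$ in place of $\alpha$ and $p=q$. The admissibility hypothesis $\alpha q-\tilde\alpha<q-1$ translates precisely into $\tilde\alpha-1>q(\alpha-1)$, which is assumed. This yields
\[
\|x_d^{\alpha-1}u\|_{L_q(Q_2^+,d\tilde\mu)}
=\|u/x_d\|_{L_q(Q_2^+,x_d^{\alpha q}d\tilde\mu)}
\le N\|D_du\|_{L_q(Q_2^+,x_d^{\alpha q}d\tilde\mu)}
=N\|x_d^\alpha D_du\|_{L_q(Q_2^+,d\tilde\mu)},
\]
and hence
\[
\|Dw\|_{L_q(Q_2^+,d\tilde\mu)}\le N\|x_d^\alpha Du\|_{L_q(Q_2^+,d\tilde\mu)}.
\]
For the time derivative, writing $u_t=D_iF_i+F_0/x_d+f$ gives
\[
x_d^\alpha u_t=D_i(x_d^\alpha F_i)+x_d^\alpha(F_0-\alpha F_d)/x_d+x_d^\alpha f,
\]
so
\[
\|w_t\|_{\bH_q^{-1}(Q_2^+,d\tilde\mu)}\le N\|x_d^\alpha u_t\|_{\bH_q^{-1}(Q_2^+,d\tilde\mu)}\le N\|u_t\|_{\bH_q^{-1}(Q_2^+,x_d^{\alpha q}d\tilde\mu)}.
\]

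Then I would invoke the parabolic weighted Sobolev embedding of \cite[Lemma 3.1]{DP20} applied with the weight $d\tilde\mu$ (whose Muckenhoupt exponent produces exactly the dimensional shift $\tilde\alpha_-$ in \eqref{0828.alpha}) to the function $w$, obtaining
\[
\|w\|_{L_{q^*}(Q_2^+,d\tilde\mu)}\le N\bigl[\|w\|_{L_q(Q_2^+,d\tilde\mu)}+\|Dw\|_{L_q(Q_2^+,d\tilde\mu)}+\|w_t\|_{\bH_q^{-1}(Q_2^+,d\tilde\mu)}\bigr].
\]
Combining this with the Hardy bound above and translating back via the substitution $w=x_d^\alpha u$ yields \eqref{eq0828}. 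The endpoint case $q^*=\infty$ with strict inequalities in \eqref{0828.alpha} follows in the same way from the corresponding endpoint of the embedding in \cite[Remark 3.2]{DP20}.

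The only non-routine point is the step involving Hardy's inequality: one must check that the exponent shift induced by the change of measure from $d\mu_1$ to $d\tilde\mu$ is still within the admissible range for Lemma \ref{lem3.1}, and that is precisely what the hypothesis $\tilde\alpha-1>q(\alpha-1)$ encodes. Once that compatibility is confirmed, the proof reduces mechanically to the proof of Lemma \ref{lem2.2}.
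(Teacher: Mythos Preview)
Your proposal is correct and follows essentially the same route as the paper: set $w=x_d^\alpha u$, control $\|x_d^{\alpha-1}u\|_{L_q(Q_2^+,d\tilde\mu)}$ by $\|x_d^\alpha D_d u\|_{L_q(Q_2^+,d\tilde\mu)}$ via Hardy/Minkowski (which is where the hypothesis $\tilde\alpha-1>q(\alpha-1)$ enters), and then apply the weighted parabolic embedding of \cite[Lemma~3.1]{DP20} to $w$ with the measure $d\tilde\mu$. The only cosmetic difference is that the paper rewrites the Minkowski computation inline instead of invoking Lemma~\ref{lem3.1}, and it suppresses the bookkeeping for $w_t$ that you spelled out; your treatment of both points is fine.
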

\begin{proof} Let us define $w = x_d^{\alpha} u$. We have
\[
D_i w = x_d^\alpha D_i u + \alpha x_d^{\alpha -1} \delta_{id}u, \quad i = 1, 2,\ldots, d,
\]
where $\delta_{id} =1$ when $i =d$ and $\delta_{id} =0$ otherwise. By the fundamental theorem of calculus, we have
\[
|x_d^{\alpha -1} u(z', x_d)| \leq x_d^\alpha \int_0^1 |D_d u(z', s x_d)| ds.
\]
Then, by applying the Minkowski inequality, we obtain
\[
\begin{split}
\|x_d^{\alpha -1} u\|_{L_q(Q_2^+, d\tilde{\mu})} & \leq \int_0^1\Big(\int_{Q_2^+} |D_du(z', sx_d)|^q x_d^{\alpha q - \tilde{\alpha}} dz' dx_d \Big)^{\frac{1}{q}} ds \\
& = \|x_d^\alpha D_d u\|_{L_q(Q_2^+, d\tilde{\mu})} \int_0^1 s^{(\tilde{\alpha} -1)/q - \alpha} ds \\
& = N \|x_d^\alpha D_d u\|_{L_q(Q_2^+, d\tilde{\mu})},
\end{split}
\]
where in the last estimate, we used $\tilde{\alpha} -1 > q(\alpha-1)$. Therefore
\begin{equation}
                                    \label{0828.eqn}
\|Dw\|_{L_q(Q_2^+, d\tilde{\mu})} \leq N \|x_d^\alpha D u\|_{L_q(Q_2^+, d\tilde{\mu})}.
\end{equation}
Now, due to \eqref{0828.alpha}, we can apply the Sobolev embedding \cite[Lemma 3.1]{DP20} to obtain
\[
\|w\|_{L_{q^*} (Q_2^+, d\tilde{\mu})}\\
 \leq N \Big[ \|w\|_{L_q(Q_2^+, d\tilde{\mu})} + \|D w\|_{L_q(Q_2^+, d\tilde{\mu})} + \|w_t\|_{\bH_q^{-1}(Q_1^+, d\tilde{\mu})} \Big].
\]
Then, by \eqref{0828.eqn}, it follows that
\[
\begin{split}
& \|w\|_{L_{q^*} (Q_2^+, d\tilde{\mu})}\\
& \leq N \Big[ \|x_d^\alpha u\|_{L_q(Q_2^+, d\tilde{\mu})} + \|x_d^\alpha D u\|_{L_q(Q_2^+, d\tilde{\mu})} + \|x_d^\alpha u_t\|_{\bH_q^{-1}(Q_2^+, d\tilde{\mu})} \Big]\\
& = N \|u\|_{\sH^1_q(Q_2^+, x_d^{\alpha q} d\tilde{\mu})}.
\end{split}
\]
This implies \eqref{eq0828} as desired.
\end{proof}
In the time-independent case, we also have the following  embedding result in which the condition \eqref{eq2.06-e} below for $q$ and $q^*$ is optimal.

\begin{lemma}
            \label{imbed-elli}
Let $\alpha, \tilde{\alpha}\in (-\infty, 1)$ and $q \in (1, \infty)$ be fixed numbers such that $\tilde{\alpha} -1>q(\alpha -1)$.  Let $q^*\in (1,\infty)$ satisfy
\begin{equation} \label{eq2.06-e}
                   \frac{1}{q} \le  \frac{1}{d+ \tilde{\alpha}_-}+ \frac{1}{q^*}.
 \end{equation}
Then for any $u\in \sW^1_q(B_2^+, x_{d}^{\alpha q}d\tilde{\mu})$, we have
\begin{equation*}
   \|u\|_{L_{q^*}(B_2^+, x_{d}^{\alpha q^*} d\tilde{\mu})}\le N\|u\|_{\sW^1_{q}(B_2^+, x_{d}^{\alpha q}d\tilde{\mu})},
\end{equation*}
where $N=N(d,\alpha, \tilde{\alpha},q,q^*)>0$ is a constant and $\tilde{\mu}(dx) = x_d^{-\tilde{\alpha}} dx$. The result still holds when $q^*=\infty$ and the inequality in \eqref{eq2.06-e} is strict.
\end{lemma}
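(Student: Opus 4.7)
The plan is to mimic the proof of Lemma \ref{imbedding-two}, replacing the parabolic Sobolev embedding \cite[Lemma 3.1]{DP20} with its elliptic analog \cite[Remark 3.2 (ii)]{DP20}, and dropping all time-dependent considerations. The reduction goes through unchanged since the only role of the parabolic structure in Lemma \ref{imbedding-two} was to accommodate the $\bH^{-1}_q$-norm of $u_t$, which is absent here.

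First, I would set $w = x_d^{\alpha} u$, so that
\[
D_i w = x_d^{\alpha} D_i u + \alpha x_d^{\alpha-1} \delta_{id} u, \qquad i = 1, 2, \ldots, d.
\]
To control the singular term, I would establish the weighted Hardy inequality
\[
\|x_d^{\alpha - 1} u\|_{L_q(B_2^+, d\tilde{\mu})} \le N \|x_d^{\alpha} D_d u\|_{L_q(B_2^+, d\tilde{\mu})},
\]
using the zero trace of $u$ at $\{x_d = 0\}$ (valid since $u \in \sW^1_q$). Writing $u(x', x_d)/x_d = \int_0^1 (D_d u)(x', s x_d)\, ds$ as in \eqref{eq2.37}, applying Minkowski's inequality, and performing the change of variables $y_d = s x_d$ yields the factor $\int_0^1 s^{(\tilde\alpha - 1)/q - \alpha}\, ds$, which is finite precisely under the assumption $\tilde\alpha - 1 > q(\alpha - 1)$. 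Combined, this gives
\[
\|Dw\|_{L_q(B_2^+, d\tilde\mu)} \le N \|x_d^{\alpha} Du\|_{L_q(B_2^+, d\tilde\mu)}.
\]

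Next, I would apply \cite[Remark 3.2 (ii)]{DP20} to $w$ with the weight $\tilde\mu$: since $\tilde\mu(dx) = x_d^{-\tilde\alpha}\,dx$ and the exponent condition \eqref{eq2.06-e} matches exactly the hypothesis of that embedding (the $\tilde\alpha_-$ accounts for the weight's degeneracy), we obtain
\[
\|w\|_{L_{q^*}(B_2^+, d\tilde\mu)} \le N\bigl(\|w\|_{L_q(B_2^+, d\tilde\mu)} + \|Dw\|_{L_q(B_2^+, d\tilde\mu)}\bigr),
\]
with strict inequality in \eqref{eq2.06-e} in the limit case $q^* = \infty$. Translating back via $w = x_d^{\alpha} u$ and the Hardy bound gives
\[
\|u\|_{L_{q^*}(B_2^+, x_d^{\alpha q^*} d\tilde\mu)} \le N\|u\|_{\sW^1_q(B_2^+, x_d^{\alpha q} d\tilde\mu)},
\]
as desired.

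There is essentially no serious obstacle: the only subtle point is tracking the weight exponents to verify that the Hardy integral $\int_0^1 s^{(\tilde\alpha - 1)/q - \alpha}\, ds$ converges, which is exactly what the hypothesis $\tilde\alpha - 1 > q(\alpha - 1)$ guarantees. The sharpness of the condition \eqref{eq2.06-e} is inherited directly from the sharpness of the underlying elliptic embedding in \cite[Remark 3.2 (ii)]{DP20}, so no additional work is needed there.
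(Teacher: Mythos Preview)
Your proposal is correct and follows exactly the approach the paper takes: the paper's proof of Lemma~\ref{imbed-elli} simply instructs the reader to repeat the argument of Lemma~\ref{imbedding-two} verbatim, replacing the parabolic embedding \cite[Lemma~3.1]{DP20} by the elliptic version \cite[Remark~3.2~(ii)]{DP20}. Your write-up even spells out the Hardy step and the convergence condition $\int_0^1 s^{(\tilde\alpha-1)/q-\alpha}\,ds<\infty$ more explicitly than the paper does.
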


\begin{proof} The proof follows as that of Lemma \ref{imbedding-two}. However, instead of applying \cite[Lemma 3.1]{DP20} as in the proof of  Lemma \ref{imbedding-two}, we apply \cite[Remark 3.2 (ii)]{DP20}.
\end{proof}
Now, we give the proof of  Remark \ref{rem2.7}.

\begin{proof}[Proof of Remark \ref{rem2.7}]
The idea of the proof is similar to that of Corollary \ref{main-thrm}. Let  us denote $\tilde{\mu}(dz) = x_d^{-\tilde{\alpha}} dxdt$,  where $\tilde{\alpha} = \frac{\gamma}{p-1} <1$. Then,  by H\"older's inequality it follows that $u \in \sH_{p_0}(Q_2^+, x_d^{\tilde{\alpha}p_0} d\tilde{\mu})$. Using Lemma \ref{imbedding-two}, as in the proof of Corollary \ref{main-thrm} by considering two cases, we can assume without loss of generality that $p$ is not too large and $p^* \leq p_0$ so that
\begin{equation} \label{0831.Aest}
\| x_d^{\tilde{\alpha}} u\|_{L_{p}(Q_2^+, d\tilde{\mu})} < \infty \quad \text{and} \quad \|x_d^{\tilde{\alpha}} Du\|_{L_{p^*}(Q_2^+, d\tilde{\mu})} <\infty.
\end{equation}
Let $w= u \eta$ be as in the proof of Corollary \ref{main-thrm}.  By a direct calculation, we see that $w \in \sH^1_{p_0}(\Omega_0, x_d^{\gamma +\alpha(p_0-p)}dz)$ is a weak solution of
\begin{equation} \label{0826.eqn}
x_d^\alpha (w_t+\lambda w)- D_i\big(x_d^\alpha (a_{ij} D_j w - \widetilde F_i)\big)   =   x_d^\alpha \tilde f  \ \text{in}\ \ (-4,0)\times \bR^d_+
\end{equation}
with the boundary condition $w =0$ on $(-4, 0) \times \partial \bR^d_+$ and the zero initial condition $w(-4,\cdot)=0$,
where
$$
\widetilde F_i=F_i\eta-a_{ij}u D_j\eta,
\quad \tilde f=f\eta+\lambda u\eta +u\eta_t- D_i\eta (a_{ij}D_ju-F_i),
$$
and $\lambda> \lambda_0 R_0^{-2}$ is a fixed constant to be chosen later.

Next, let $q = p/ (p-1)$,  $\tilde{\gamma} = \alpha q - \tilde{\alpha}$,
and $G =(G_1, G_2,\ldots, G_d), g \in C_0^\infty(Q_1^+)$ with
\[
\|G\|_{L_q(Q_1^+,  x_d^{\tilde \gamma}dz)} + \|g\|_{L_q(Q_1^+,  x_d^{\tilde \gamma} dz)} =1.
\]
Because $\gamma \in (p\alpha -1, p-1)$, we have
\[
\alpha q - 1 <\tilde{\gamma}  < q -1.
\]
Let $q_0=p_0/(p_0-1)$ and $\gamma_0=\alpha q_0-\tilde \alpha$. Since $q_0\ge q$, and $\alpha,\tilde\alpha <1$, we also have
\[
\alpha q_0 - 1 <\gamma_0  < q_0 -1.
\]
Moreover, as $G$ and $g$ are compactly supported in $Q_1^+$, $|G| + |g| \in L_{q_0}(Q_1^+, x_d^{\gamma_0}dz)$. Therefore, by Remark \ref{rem2.6}, there exists a weak solution $v \in \sH_{q_0}^1((-4, 0) \times \bR^d_+, x_d^{\gamma_0} dz)$  of
\begin{equation} \label{v0825.eqn}
-x_d^{\alpha}(v_t - \lambda v) - D_i[x_d^\alpha (a_{ji} D_j v -  G_i)] = \sqrt{\lambda} x_d^\alpha g \quad \text{in} \quad (-4, 0) \times \bR^d_+
\end{equation}
with the boundary condition $v =0$ on $(-4, 0) \times \partial \bR^d_+$ and the zero terminal condition $v(0, \cdot) =0$. Since $q\le q_0$, $\tilde \alpha<1$, and $G$ and $g$ are compactly supported, following the proof of Theorem \ref{thm3} \textup{\bf{(ii)}}, we have $v\in \sH^1_{q}((-4,0)\times \bR^d_+, x_d^{\tilde \gamma}dz)$.
Moreover, we also have
\begin{equation} \label{v0828.eqn}
\begin{split}
& \| Dv \|_{L_q((-4, 0) \times \bR^d_+, x_d^{\tilde{\gamma}} dz)} + \sqrt{\lambda} \|v\|_{L_q((-4, 0) \times \bR^d_+, x_d^{\tilde{\gamma}} dz)} \\
& \leq N \big[ \| G\|_{L_q(Q_1^+, x_d^{\tilde \gamma} dz)} +  \|g\|_{L_q(Q_1^+,  x_d^{\tilde \gamma} dz)} \big] = N.
\end{split}
\end{equation}
From this, and the PDE of $v$ in  \eqref{v0825.eqn}, \eqref{v0828.eqn}, and as $\lambda$ sufficiently large, we infer that
\begin{align} \notag
 &\|v\|_{\sH_q^{1}(Q_2^+, x_d^{\tilde{\gamma}} dz)}\notag\\
  & = \| Dv \|_{L_q(Q_2^+, x_d^{\tilde{\gamma}} dz)} +  \|v\|_{L_q(Q_2^+, x_d^{\tilde{\gamma}} dz)}+ \| v_t\|_{\bH^{-1}_q(Q_2^+, x_d^{\tilde{\gamma}} dz)}
  \leq N\sqrt{\lambda}.
  \label{v0829.est}
\end{align}
It can be checked that $\tilde{\alpha}$, $q$, and $q^*$  satisfy the conditions in Lemma \ref{imbedding-two}. Therefore,  it follows from Lemma \ref{imbedding-two} and  \eqref{v0829.est}
\begin{equation} \label{v*0829.est}
\|x_d^\alpha v\|_{L_{q^*}(Q_2^+,  x_d^{-\tilde{\alpha}} dz)} \leq N \|v\|_{\sH_q^{1}(Q_2^+, x_d^{\tilde{\gamma}} dz)} \leq N \sqrt{\lambda}.
\end{equation}
Then, by using $w= u\eta$ as a test function for the equation of $v$, and  $v$ as a test function for the equation of $w$, we have
\[
\int_{Q_1^+} \Big[(x_d^{\alpha} Du) \cdot G + \sqrt{\lambda} (x_d^{\alpha} u) g \Big] dz= \int_{Q_2^+} \Big[(x_d^\alpha Dv) \cdot \tilde{F} + (x_d^\alpha v) \tilde{f} \Big] dz.
\]
By applying H\"{o}lder's inequality and then using \eqref{v0828.eqn} and \eqref{v*0829.est}, we obtain
\[
\begin{split}
&\left| \int_{Q_1^+} \Big[(x_d^{\alpha} Du) \cdot G + \sqrt{\lambda} (x_d^{\alpha} u) g \Big] dz \right | \\
& \leq \|Dv\|_{L_q(Q_2^+, x_d^{\tilde{\gamma}} dz)}\| \tilde{F}\|_{L_p(Q_2^+, x_d^\gamma dz)} + \|x_d^{\alpha} v\|_{L_{q^*}(Q_2^+, x_d^{-\tilde{\alpha}} dz)} \| \tilde{f}\|_{L_{p^*}(Q_2^+, x_d^{\tilde{\alpha}(p^*-1)} dz)} \\
& \leq N\| \tilde{F}\|_{L_p(Q_2^+, x_d^\gamma dz)} + N \sqrt{\lambda}  \| \tilde{f}\|_{L_{p^*}(Q_2^+, x_d^{\tilde{\alpha}(p^*-1)} dz)}.
\end{split}
\]
Observe that
\[
\| \tilde{F}\|_{L_p(Q_2^+, x_d^\gamma dz)} \leq N\Big[ \|F\|_{L_p(Q_2^+, x_d^\gamma dz)} + \| u\|_{L_p(Q_2^+, x_d^\gamma dz)}\Big]
\]
and
\[
\begin{split}
\| \tilde{f}\|_{L_{p^*}(Q_2^+, x_d^{\tilde{\alpha}(p^*-1)}dz)} & \leq N \Big[\|f\|_{L_{p^*}(Q_2^+, x_d^{\tilde{\alpha}(p^*-1)}dz)} + \|F\|_{L_{p^*}(Q_2^+, x_d^{\tilde{\alpha}(p^*-1)}dz)} \\
& \quad + \lambda \|u\|_{L_{p^*}(Q_2^+, x_d^{\tilde{\alpha}(p^*-1)}dz)}  + \|Du\|_{L_{p^*}(Q_2^+, x_d^{\tilde{\alpha}(p^*-1)}dz)}\Big].
\end{split}
\]
As $p^* \leq p$ and $\gamma < p-1$, by H\"{o}lder's inequality, we obtain
\[
\begin{split}
 \|F\|_{L_{p^*}(Q_2^+, x_d^{\tilde{\alpha}(p^*-1)}dz)}
& \leq  \|F\|_{L_{p}(Q_2^+, x_d^{\gamma}dz)} \left( \int_{Q_2^+} x_d^{-\frac{\gamma}{p-1}} dz \right)^{ 1/p^*-1/p}\\
& \leq N  \|F\|_{L_{p}(Q_2^+, x_d^{\gamma}dz)}.
\end{split}
\]
Therefore,
\[
\begin{split}
&\left| \int_{Q_1^+} \Big[(x_d^{ \alpha} Du) \cdot G
+ \sqrt{\lambda} (x_d^{ \alpha} u) g \Big] dz \right | \\
& \leq N (\sqrt{\lambda} +1)\|F\|_{L_{p}(Q_2^+, x_d^{\gamma}dz)} + N \sqrt{\lambda} \| f\|_{L_{p^*}(Q_2^+, x_d^{\tilde{\alpha}(p^*-1)}dz)}+ N \| u\|_{L_p(Q_2^+, x_d^\gamma dz)}  \\
& +  N\lambda^{3/2} \|u\|_{L_{p^*}(Q_2^+, x_d^{\tilde{\alpha}(p^*-1)}dz)} + N\sqrt{\lambda} \|Du\|_{L_{p^*}(Q_2^+, x_d^{\tilde{\alpha}(p^*-1)}dz)}.
\end{split}
\]
Since $G$ and $g$ are arbitrary  and $\gamma/p+\tilde\gamma/q=\alpha$, we infer that
\[
\begin{split}
& \| Du\|_{L_p(Q_1^+, x_d^\gamma dz)} + \sqrt{\lambda}\|u\|_{L_p(Q_1^+, x_d^\gamma dz)}\\
& \leq N (\sqrt{\lambda} +1)\|F\|_{L_{p}(Q_2^+, x_d^{\gamma}dz)} + N \sqrt{\lambda} \| f\|_{L_{p^*}(Q_2^+, x_d^{\tilde{\alpha}(p^*-1)}dz)}+ N \| u\|_{L_p(Q_2^+, x_d^\gamma dz)}  \\
& +  N\lambda^{3/2} \|u\|_{L_{p^*}(Q_2^+, x_d^{\tilde{\alpha}(p^*-1)}dz)} + N\sqrt{\lambda} \|Du\|_{L_{p^*}(Q_2^+, x_d^{\tilde{\alpha}(p^*-1)}dz)},
\end{split}
\]
which is equivalent to
\begin{equation} \label{u0830.est}
\begin{split}
& \|x_d^{\tilde\alpha} Du\|_{L_p(Q_1^+, d\tilde \mu)}
+ \sqrt{\lambda}\|x_d^{\tilde\alpha} u\|_{L_p(Q_1^+, d\tilde \mu)}\\
& \leq N (\sqrt{\lambda} +1)\|x_d^{\tilde\alpha} F\|_{L_{p}(Q_2^+, d\tilde \mu)}
+ N \sqrt{\lambda} \| x_d^{\tilde\alpha} f\|_{L_{p^*}(Q_2^+, d\tilde \mu)} \\
& + N \|x_d^{\tilde\alpha} u\|_{L_p(Q_2^+, d\tilde \mu)} + N\lambda^{3/2} \|x_d^{\tilde\alpha} u\|_{L_{p^*}(Q_2^+, d\tilde \mu)}
+ N\sqrt{\lambda} \|x_d^{\tilde\alpha} Du\|_{L_{p^*}(Q_2^+, d\tilde \mu)},
\end{split}
\end{equation}
where $d\tilde\mu=x_d^{-\tilde\alpha}dz$.  Note that by \eqref{0831.Aest} and the assumptions in the remark, all the terms on the right-hand side of \eqref{u0830.est} are finite. Then as $p^*<p$, we conclude \eqref{remark-estz} from \eqref{u0830.est} by using H\"older's inequality and a standard iteration argument with the underlying measure $d\tilde \mu$ and for a sufficiently large $\lambda$.  See, for example, \cite[pp. 80--82]{Giaq}. The remark is proved in this case.
\end{proof}

\begin{proof}[Proof of Corollary \ref{ell-local-th}] As discussed in Remark \ref{rem2.6}, it follows from Theorem \ref{ell-thm} that there exists unique weak solution $u \in \sW^1_p(\bR^d_+, x_d^\gamma dx)$ for \eqref{ell-eqn}. From this and Lemma \ref{imbed-elli}, we can follow the proof of Remark \ref{rem2.7} to obtain the assertion in Corollary \ref{ell-local-th}.
\end{proof}

\end{document}